\UseRawInputEncoding
\documentclass[11pt, reqno, oneside]{amsart}
\usepackage[T1]{fontenc}
\usepackage[utf8]{inputenc}
\usepackage[british]{babel}
\usepackage{amsmath,amssymb,amsthm,mathtools}
\usepackage{stmaryrd}
\usepackage{geometry}
\usepackage{hyperref}
\usepackage{enumitem}
\hypersetup{colorlinks=true,linkcolor=blue,citecolor=blue,urlcolor=blue}
\usepackage{caption, subcaption}
\usepackage{comment}

\usepackage{cleveref}

\usepackage{tikz-cd}

\tikzset{
curarrow/.style={
rounded corners=10pt,
execute at begin to={every node/.style={fill=red}},
to path={-- ([xshift=50pt]\tikztostart.center)
  |- (#1) node[fill=white] {$\scriptstyle \delta$}
  -| ([xshift=-40pt]\tikztotarget.center)
  -- (\tikztotarget)}
  }
}

\tikzset{
curvararrow/.style={
rounded corners=10pt,
execute at begin to={every node/.style={fill=red}},
to path={-- ([xshift=70pt]\tikztostart.center)
  |- (#1) node[fill=white] {$\scriptstyle \delta$}
  -| ([xshift=-40pt]\tikztotarget.center)
  -- (\tikztotarget)}
  }
}

\usepackage{adjustbox}

\providecommand\given{}
\newcommand\SetSymbol[1][]{%
\nonscript\:#1\vert
\allowbreak
\nonscript\:
\mathopen{}}
\DeclarePairedDelimiterX\Set[1]\{\}{%
\renewcommand\given{\SetSymbol[\delimsize]}
#1
}

\title{Equivariant cosymplectic geometry}
\author{Eva Miranda}
\address{Eva Miranda,
Laboratory of Geometry and Dynamical Systems \& SYMCREA, Department of Mathematics, EPSEB, Universitat Polit\`{e}cnica de Catalunya-IMTech
in Barcelona and
\\ CRM Centre de Recerca Matem\`{a}tica, Campus de Bellaterra
Edifici C, 08193 Bellaterra, Barcelona.
 }
\thanks{Corresponding author: Eva Miranda. Email: \texttt{eva.miranda@upc.edu}}
\thanks{Eva Miranda is funded by the Catalan Institution for Research and Advanced Studies via an ICREA Academia Prize 2021 and by the Alexander von Humboldt Foundation via a Friedrich Wilhelm Bessel Research Award.}
\thanks{Both authors are supported by the Spanish State
Research Agency, through the Severo Ochoa and Mar\'{\i}a de Maeztu Program for Centers and Units
of Excellence in R\&D (project CEX2020-001084-M), and under grant reference PID2023-146936NB-I00 funded by MICIU/AEI/10.13039/501100011033 and, by ERDF/EU.} 
\author{Pablo Nicolás}
\address{Pablo Nicolás,
Centre de Recerca Matemàtica, CRM \& Laboratory of Geometry and Dynamical Systems and SYMCREA research unit, Department of Mathematics, Universitat Polit\`{e}cnica de Catalunya, Barcelona}
\thanks{Pablo Nicolás is supported under a MDM-FPI contract with reference code PRE2022-102974.}
\date{\today}

\usepackage{thmtools}

\declaretheoremstyle[
  spaceabove=1em, spacebelow=1em,
  headfont=\bfseries,
  notefont=\bfseries, notebraces={(}{)},
  bodyfont=\itshape,
  postheadspace=0.5em,
]{thmlike}

\declaretheoremstyle[
  spaceabove=1em, spacebelow=1em,
  headfont=\normalfont,
  notefont=\normalfont, notebraces={(}{)},
  bodyfont=\normalfont,
  postheadspace=1em,
  qed=$\blacksquare$
]{prooflike}

\declaretheoremstyle[
  spaceabove=1em, spacebelow=1em,
  headfont=\bfseries,
  notefont=\bfseries, notebraces={(}{)},
  bodyfont=\normalfont,
  postheadspace=0.5em,
]{deflike}

\declaretheorem[style=thmlike, numberwithin=section]{theorem}
\declaretheorem[style=thmlike, sibling=theorem]{corollary}
\declaretheorem[style=deflike, sibling=theorem]{definition}
\declaretheorem[style=thmlike, sibling=theorem]{lemma}
\declaretheorem[style=thmlike, sibling=theorem]{proposition}

\declaretheorem[style=deflike, sibling=theorem]{remark}

\declaretheorem[style=thmlike, name=Main Theorem]{maintheorem}

\declaretheorem[style=deflike, sibling=theorem, numbered=no, name=Definition]{definition*}
\declaretheorem[style=deflike, sibling=theorem, numbered=no, name=Example]{example*}

\newcommand{\F}{\mathcal F}

\newcommand*\diff{\mathop{}\!\mathrm{d}}

\makeatletter

\DeclareRobustCommand\cotimes{\mathbin{\widehat{\otimes}\relax}}

\newcommand{\c@otimes}[2]{%
  \vbox{
    \ialign{##\cr
      \hidewidth$\m@th#1{}_\frown$\kern-\scriptspace\hidewidth\cr
      \noalign{\nointerlineskip\kern-1pt}
      $\m@th#1\otimes$\cr
    }%
  }%
}
\makeatother

\begin{document}

\begin{abstract}
Cosymplectic manifolds provide a natural geometric framework for codimension-one symplectic foliations and arise throughout geometry and mathematical physics. We develop an equivariant cohomological theory for cosymplectic manifolds, studying the role of symmetry in their topology and their connections to Poisson geometry.

Reinterpreting the obstruction theory of Guillemin, Miranda, and Pires~\cite{guillemin_codimension_2011}, we introduce equivariant obstruction classes for group actions preserving the foliation and characterize the existence of invariant cosymplectic structures. We further show that the vanishing of the first obstruction class is equivalent to equivariant unimodularity of the associated Poisson structure, extending a classical criterion to the equivariant setting via Ginzburg's framework for equivariant Poisson cohomology~\cite{ginzburg_equivariant_1999}.

For compact cosymplectic manifolds fibering over $\mathbb{S}^1$, we construct equivariant versions of de~Rham, foliated, and Poisson cohomologies, establishing formality results in each case via an equivariant Wang sequence. The key input is Kirwan's formality theorem for Hamiltonian actions on the symplectic fiber~\cite{kirwan}, which, through the equivariant Wang sequence, yields a complete and computable description of all three equivariant cohomology theories in terms of the monodromy action on the fiber.

\end{abstract}

\maketitle

\section{Introduction}

Symmetry has long served as a guiding principle in geometry.
From Hamiltonian group actions in symplectic geometry to moment maps in geometric representation theory, equivariant structures provide a language through which topology, dynamics, and geometry become deeply intertwined.
In odd dimensions, however, the landscape is subtler.
Alongside contact geometry, there is another natural counterpart to symplectic geometry: cosymplectic geometry, a theory whose rigidity and global character conceal a rich cohomological structure.

A cosymplectic manifold is a smooth manifold of dimension $(2n+1)$ endowed with a pair of closed differential forms $(\alpha,\beta)$ satisfying $\alpha \wedge \beta^n \neq 0$. The form $\alpha$ defines a codimension-one foliation $\mathcal{F}=\ker \alpha$, while $\beta$ restricts to a symplectic form on every leaf.
Thus cosymplectic geometry occupies a distinguished position at the interface between foliation theory and symplectic geometry: it may be viewed both as a symplectic foliation with transverse rigidity and as an odd-dimensional shadow of symplectic geometry.

Not every codimension-one symplectic foliation arises from a global cosymplectic structure.
The problem of determining when such a structure exists was resolved by Guillemin, Miranda, and Pires in \cite{guillemin_codimension_2011}, who introduced two obstruction classes in foliated cohomology whose vanishing precisely characterizes the existence of a compatible cosymplectic structure.
Their work revealed that the passage from leafwise symplectic data to a global geometric structure is governed by subtle cohomological phenomena.

Cosymplectic geometry also appears naturally within Poisson geometry.
The critical hypersurface of a $b$-symplectic manifold inherits a canonical cosymplectic structure \cite{guillemin_codimension_2011,GMP14}, making cosymplectic manifolds the geometric avatars of degeneracy loci in singular symplectic geometry.
They arise naturally in the study of Poisson manifolds with controlled singularities, modular vector fields, and desingularization phenomena, suggesting a broader framework in which foliations, symmetry, and cohomological invariants coexist.

The purpose of this article is to develop an equivariant counterpart of this theory.
Given a compact Lie group acting on a manifold and preserving a codimension-one symplectic foliation, we ask whether the foliation admits a compatible invariant cosymplectic structure, and to what extent the classical obstruction theory survives in the equivariant setting.
Our approach combines equivariant foliated cohomology, the Cartan model of equivariant differential forms, and the geometry of leafwise moment maps.

\subsection*{Main results}

The first main result extends the obstruction classes of Guillemin, Miranda, and Pires \cite{guillemin_codimension_2011} to the equivariant setting.

\begin{maintheorem}[Equivariant obstruction classes]
Let $(M,\mathcal{F},\beta)$ be a coorientable codimension-one symplectic foliation preserved by a compact connected Lie group $G$.
There exist natural equivariant obstruction classes
\[
    c_{\mathcal{F}}^G \in \mathrm{H}_G^1(\mathcal{F}),
    \qquad
    \sigma_{\mathcal{F}}^G \in \mathrm{H}_G^2(\mathcal{F}),
\]
whose vanishing is equivalent to the existence of a $G$-equivariant cosymplectic structure inducing the foliation.\footnote{
    Observe that since the vanishing of the first obstruction class $c_{\mathcal{F}}^G$ is a necessary condition, \Cref{cor:second-obstruction-class} applies and we may regard the second obstruction class $\sigma_{\mathcal{F}}^G$ as an element of $\mathrm{H}^2(\mathcal{F})$.
}
\end{maintheorem}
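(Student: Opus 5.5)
The strategy is to redo the Guillemin--Miranda--Pires construction inside the Cartan model of equivariant foliated cohomology. Set $\Omega_G^\bullet(\mathcal{F}) = (S(\mathfrak g^*)\otimes\Omega^\bullet(\mathcal{F}))^G$ with differential $d_G = d_{\mathcal{F}} - \iota$. Here $\Omega^\bullet(\mathcal F)=\Gamma(\wedge^\bullet T^*\mathcal F)$, and $\iota$ contracts with the tangential projection of the fundamental vector fields. This projection is well defined once we fix an invariant splitting $TM=T\mathcal{F}\oplus L$, which exists because $G$ is compact.

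\textbf{Step 1: choose invariant auxiliary data.} By coorientability, pick a defining one-form $\alpha$ with $\ker\alpha=T\mathcal F$. Averaging over $G$ gives an invariant $\alpha$; the kernel is preserved, and positivity on a chosen coorientation survives averaging because $G$ is connected and so preserves the coorientation. Similarly, pick an invariant extension $\tilde\beta\in\Omega^2(M)^G$ of the leafwise form, with $\iota_X\tilde\beta=0$ for an invariant transverse field $X$ satisfying $\alpha(X)=1$. Integrability gives $d\alpha=\gamma\wedge\alpha$ for some $\gamma$, which can be taken invariant with $\iota_X\gamma=0$.

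\textbf{Step 2: first obstruction.} Since $\mathcal{L}_{\xi_M}\alpha=0$, we get $d\iota_{\xi_M}\alpha=-\iota_{\xi_M}d\alpha=-\iota_{\xi_M}\gamma\,\alpha+\alpha(\xi_M)\gamma$. This suggests taking the equivariant cocycle
\[
c^G=\gamma|_{\mathcal F}+\mu_\alpha,\qquad \mu_\alpha(\xi)=\alpha(\xi_M),
\]
up to sign conventions, viewed in degree one of $\Omega_G(\mathcal F)$, where the $\mathfrak g^*$-component has degree $2$ but enters only through the constant part. One checks $d_G$-closedness from $d\gamma\wedge\alpha=0$ and invariance. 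Changing $\alpha$ to $e^f\alpha$ with $f$ invariant shifts $c^G$ by $d_G f$, so the class $c^G_{\mathcal F}$ is well defined.

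Vanishing of $c^G_{\mathcal F}$ means $\gamma|_{\mathcal F}=d_{\mathcal F}f$ for an invariant $f$ with $\mu_\alpha$ matched. Then $e^{-f}\alpha$ is an invariant closed defining form. Conversely, a closed invariant $\alpha$ gives $\gamma=0$, and we must make sure the moment-type term is exact in the same normalization. The point to check is that $\iota_{\xi_M}\alpha$ is then constant on leaves and hence absorbed. I expect to prove this by normalizing through the choice of complement, so that fundamental fields of a connected group preserving a closed $\alpha$ give $d\iota_{\xi_M}\alpha=0$.

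\textbf{Step 3: second obstruction.} Assuming $\alpha$ closed and invariant, $d\tilde\beta=\alpha\wedge\eta$ with $\eta$ invariant. Restricting $\eta$ leafwise gives an invariant foliated 2-form, and we complete it to an equivariant cocycle $\sigma^G=\eta|_{\mathcal F}+\nu$, where the degree-zero moment-type piece $\nu$ comes from the transverse part of $\iota_{\xi_M}\tilde\beta$. As in Step 2, changing $\tilde\beta$ by $\alpha\wedge\theta$ shifts $\sigma^G$ by $d_G\theta$, which gives well-definedness.

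If $\sigma^G$ is exact, we correct $\tilde\beta$ by $\alpha\wedge\theta$ with $\theta$ invariant to obtain a closed invariant $\beta$. Then $\alpha\wedge\beta^n\neq0$ still holds, since the leafwise restriction is unchanged. The converse is immediate by taking the data of a given invariant cosymplectic structure. The footnote's reduction to $\mathrm H^2(\mathcal F)$ via \Cref{cor:second-obstruction-class} can be used here to simplify: once $c^G=0$, only the ordinary foliated class of invariant representatives matters.

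\textbf{Main obstacle.} The delicate step is making the equivariant classes genuinely independent of all choices and showing that exactness in the Cartan complex yields \emph{invariant} primitives. My plan is to use the averaging operator: it is a chain map, and for compact connected $G$ it induces an isomorphism between invariant and ordinary cohomology. This lets us replace any primitive by an invariant one and argue that the equivariant class vanishes iff an invariant correction exists.
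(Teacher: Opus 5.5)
\textbf{There is a genuine gap in Step 3: your second class never involves a leafwise moment map, so it cannot detect the data it is meant to obstruct.}

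A $G$-equivariant cosymplectic structure requires an equivariantly closed $\underline\beta=\beta-\mu$. That is a closed invariant $\beta$ \emph{together with} a global moment map, $\diff\mu^\xi=-\iota_{\xi_M}\beta$ on all of $M$. A closed invariant $\beta$ is not enough. Take $\mathbb{S}^1\times T^2$ with $\alpha=\diff t$, $\beta=\diff x\wedge\diff y$, and $\mathbb{S}^1$ rotating the $x$-factor. This structure is invariant, but $\iota_{\partial_x}\beta=\diff y$ is not exact.

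Your `moment-type piece' is identically zero, since $(\iota_{\xi_M}\tilde\beta)(X)=-(\iota_X\tilde\beta)(\xi_M)=0$. So $\sigma^G=\eta|_{\mathcal F}$, and this is not a Cartan cocycle. Given a leafwise moment map ($\diff_{\mathcal F}\mu^\xi=-\iota_{\xi_M}\beta$), the relation $\mathcal L_{\xi_M}\tilde\beta=0$ gives $\iota_{\xi_M}(\eta|_{\mathcal F})=-\diff_{\mathcal F}(\mathcal L_X\mu^\xi)$, which is nonzero in general.

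The missing idea is to start from the $\diff_{\mathcal F,G}$-cocycle $\beta-\mu$. This presupposes a leafwise Hamiltonian action, which the paper tacitly assumes when it treats $\beta$ as a class in $\mathrm H^2_G(\mathcal F)$. Then one computes $\diff_G(\tilde\beta-\mu)=\alpha\wedge(\eta-\mathcal L_X\mu)$. The correct representative is therefore $\eta|_{\mathcal F}-\mathcal L_X\mu$. This is exactly the paper's $\delta[\beta]$ for the short exact sequence of Cartan complexes, split by the invariant field $X$ (the paper's $R$).

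Suppose $\eta|_{\mathcal F}-\mathcal L_X\mu=\diff_{\mathcal F,G}\theta$, and let $\tilde\theta$ be the extension of $\theta$ with $\iota_X\tilde\theta=0$. Then $\tilde\beta+\alpha\wedge\tilde\theta$ is closed and restricts to $\beta$ on leaves. The $\mathfrak g^*$-component of the equation, $\iota_{\xi_M}\theta=\mathcal L_X\mu^\xi$, is precisely what makes $\mu$ a global moment map for it. Passing to the ordinary class in $\mathrm H^2(\mathcal F)$, as you propose, discards exactly this component. The paper's corollary identifies $\mathrm H^3_G(\mathcal F;\mathrm N^*\mathcal F)$ with $\mathrm H^2_G(\mathcal F)$, not with $\mathrm H^2(\mathcal F)$. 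As written, your argument yields only an invariant cosymplectic structure.

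\textbf{Step 2 has a smaller problem of the same kind.} For actions tangent to the leaves it is the paper's proof of the first obstruction: $c^G_{\mathcal F}=[j\nu]$, an invariant primitive, then rescale $\alpha$. But the term $\mu_\alpha(\xi)=\alpha(\xi_M)$ cannot belong to a degree-one cocycle. A $\mathfrak g^*$-valued function has Cartan degree $2$, and degree-one Cartan cochains are just $\Omega^1(\mathcal F)^G$. In the paper, $\diff_G\alpha=\alpha\wedge\nu$ forces $\iota_{\xi_M}\alpha=0$ by comparing degrees, so $\mu_\alpha=0$.

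If you allow transverse fundamental fields, two things break. First, your differential $\diff_{\mathcal F}-\iota_{\mathrm{pr}}$ does not square to zero on invariants: the transverse part $\alpha(\xi_M)X$ still acts on $\Omega^\bullet(\mathcal F)$, so Cartan's formula fails. Second, a nonzero constant $\iota_{\xi_M}\alpha$ can never be absorbed, since an equivariantly closed $\alpha$ must satisfy $\iota_{\xi_M}\alpha=0$.

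Finally, the obstacle you single out is not the real one. Cartan primitives are invariant by definition. What needs care is exactness of the short sequence after taking invariants, and the invariant splitting provides that.
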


These classes detect whether the defining forms of the cosymplectic structure may be chosen equivariantly closed in the Cartan model, and they encode the obstruction to globally assembling local Hamiltonian data into a collective moment map.
In the non-equivariant case we obtain a more intrinsic and cleaner cohomological description of the obstruction theory associated to the foliation in \cite{guillemin_codimension_2011}.
The first obstruction class is moreover related to the Poisson geometry of $M$: using the equivariant Poisson cohomology framework of Ginzburg \cite{ginzburg_equivariant_1999}, we prove the following equivariant analogue of a classical criterion of Guillemin, Miranda, and Pires.

\begin{maintheorem}[Equivariant unimodularity]
Under the assumptions above, the Poisson manifold $(M,\Pi)$ is $G$-unimodular if and only if the first obstruction class
\[
    c_{\mathcal{F}}^G \in \mathrm{H}_G^1(\mathcal{F})
\]
vanishes.
\end{maintheorem}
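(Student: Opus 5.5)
We reduce the statement to the non-equivariant criterion of Guillemin, Miranda and Pires by making all the relevant objects explicit in the Cartan model. Fix a $G$-invariant defining one-form $\alpha$ for $\mathcal{F}$; since $G$ is compact and connected, we obtain it by averaging any defining form, and coorientability together with connectedness lets the averaged form still define $\mathcal{F}$. Then $d\alpha = \gamma\wedge\alpha$ for a one-form $\gamma$ whose leafwise restriction $[\gamma|_{\mathcal{F}}]$ is the classical class $c_{\mathcal{F}}\in \mathrm{H}^1(\mathcal{F})$. We may also take $\gamma$ invariant. In the Cartan model for equivariant foliated cohomology, an equivariant foliated $1$-cochain is $\gamma|_{\mathcal{F}} + f$, where $f\colon \mathfrak g\to C^\infty(M)^{G}$ is linear. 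The natural representative of $c^G_{\mathcal{F}}$ is
\[
\gamma|_{\mathcal{F}} - \iota_{\xi_M}\alpha\cdot(\text{correction}),
\]
or, more intrinsically, $\gamma - \mu$, where $\mu(X) = \iota_{X_M}\gamma$ or the function measuring how $\alpha$ fails to be equivariantly closed. Concretely, we expect $d_G\alpha = \gamma\wedge\alpha - \iota_{X_M}\alpha$ to define the equivariant analogue. I will use whatever representative the earlier construction of $c^G_{\mathcal{F}}$ fixes.

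On the Poisson side, $\Pi$ is the leafwise inverse of $\beta$. Its modular vector field with respect to the invariant volume form $\Omega=\alpha\wedge\beta^n$ is computed as in the classical case: $v_\Omega = \Pi^\sharp(\gamma)$ up to sign. In Ginzburg's framework, $G$-unimodularity means that the equivariant modular class in $\mathrm{H}^1_{\Pi,G}(M)$ vanishes. Its Cartan representative is $v_\Omega + \phi$, with $\phi(X) = \operatorname{div}_\Omega X_M$, or with the appropriate function determined by $\mathcal{L}_{X_M}\Omega$. Because $\Omega$ is invariant, this gives $\phi = 0$ after the averaging, and any freedom comes from changing $\Omega$ by an invariant positive function $e^{h}$.

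The key step is a chain map $\Pi^\sharp\colon (\Omega^\bullet(\mathcal{F})\otimes S\mathfrak g^*)^G \to (\mathfrak X^\bullet_\Pi\otimes S\mathfrak g^*)^G$, extending the classical identification of foliated forms with leafwise multivectors. This map should send the representative of $c^G_{\mathcal{F}}$ to the equivariant modular cocycle. We then show that it is injective in degree one on cohomology, or at least on the relevant classes. The argument runs in both directions:
\begin{itemize}[label={}]
\item If $c^G_{\mathcal{F}}=0$, write the representative as $d_G$ of an invariant function $h$. Rescaling $\Omega$ by $e^{h}$ then kills the equivariant modular cocycle, so $\Pi$ is $G$-unimodular.
\item Conversely, if the equivariant modular class vanishes, there is an invariant $h$ with $v_\Omega = \Pi^\sharp(dh)$, together with the matching condition on the $\mathfrak g^*$-component. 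Since $\Pi^\sharp$ is injective on leafwise covectors, this gives $\gamma|_{\mathcal{F}} = dh|_{\mathcal{F}}$ together with the corresponding equivariant identity, so $c^G_{\mathcal{F}}=0$.
\end{itemize}

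The main obstacle is the converse direction. Poisson cohomology primitives are arbitrary multivector fields, not functions, so a vanishing class gives $v_\Omega+\phi = d_{\Pi,G}(Y)$ for an element $Y$ of degree zero. In Cartan degree one, however, the degree-zero part consists only of invariant functions, so $Y = h$ is indeed a function. Matching the polynomial $\mathfrak g^*$-components then requires checking that the moment-type term $\iota_{X_M}$ appearing in Ginzburg's differential corresponds exactly to the function part of $c^G_{\mathcal{F}}$. I would verify this by a direct computation of $\mathcal{L}_{X_M}(e^{h}\Omega)$ against $\iota_{X_M}\gamma$, using $d\alpha=\gamma\wedge\alpha$ and the invariance of $\alpha$ and $\beta$.
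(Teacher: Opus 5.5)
Your two bullet arguments are the paper's proof. Take the invariant volume form $\Theta=\alpha\wedge\beta^n$ and use the Guillemin--Miranda--Pires formula $\iota_{X^\Theta}j\beta=-j\nu$ (your $v_\Omega=\pm\Pi^\sharp\gamma$). A degree-zero Cartan primitive is an invariant function $h$, with $\mathrm{d}_{\Pi,G}h=\mathrm{d}_\Pi h=X_h$. Nondegeneracy of $\beta$ on $\mathrm{T}\mathcal F$ then passes between $X^\Theta=X_h$ and $j\nu=\mathrm{d}_{\mathcal F}h$. Your remark that replacing $\Theta$ by $e^{h}\Theta$, with $h$ invariant, only shifts $X^\Theta$ by a Hamiltonian field also handles independence of the choice of invariant volume form.

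The framework you build around this, however, rests on a grading error. In the Cartan model $\operatorname{Sym}^k(\mathfrak g^*)$ has degree $2k$. Hence $\Omega^1_G(\mathcal F)=\Omega^1(\mathcal F)^G$ and $\mathfrak X^1_G(M)=\mathfrak X^1(M)^G$, and an equivariant map $\mathfrak g\to C^\infty(M)$ is a degree-two element. Consequently:
\begin{itemize}
\item there is no ``function part'' of $c^G_{\mathcal F}$, no term $\phi$ in the modular cocycle, and no matching of $\mathfrak g^*$-components, so the step you call the main obstacle is vacuous;
\item the representative of $c^G_{\mathcal F}$ is simply $j\nu=-\gamma|_{\mathcal F}$;
\item $\mathrm{d}_G\alpha=\mathrm{d}\alpha$, since the action is tangent to $\mathcal F$.
\end{itemize}

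What the $\operatorname{Sym}^1$-components actually encode is the cocycle condition, and you never verify it. $\mathrm{d}_{\mathcal F,G}(j\gamma)=0$ requires $\iota_{X^\#}\gamma=0$. $\mathrm{d}_{\Pi,G}v_\Omega=0$ requires that $v_\Omega$ pair to zero with the cotangent lift $\lambda(X)=-\iota_{X^\#}\beta$. Both take one line each:
\begin{itemize}
\item since $\iota_{X^\#}\alpha=0$ and $\alpha$ is invariant, $0=\mathcal L_{X^\#}\alpha=\iota_{X^\#}(\gamma\wedge\alpha)=(\iota_{X^\#}\gamma)\,\alpha$, so $\iota_{X^\#}\gamma=0$;
\item $\lambda(X)(v_\Omega)=(\iota_{v_\Omega}\beta)(X^\#)=\pm\gamma(X^\#)=0$.
\end{itemize}

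With the grading corrected and these two checks added, your argument is complete and coincides with the paper's. The second check even avoids the moment-map computation the paper uses to show that $X^\Theta$ is $\mathrm{d}_{\Pi,G}$-closed.
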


The second part of the paper concerns the topology of compact cosymplectic manifolds.
By foundational results of Tischler \cite{tischler_fibering_1970} and Li \cite{li_topology_2008}, compact cosymplectic manifolds with compact leaves fiber over the circle and may be realized as symplectic mapping tori.
This global description permits explicit computations of de~Rham, foliated, and Poisson cohomology, both in the classical and equivariant settings.

For the class of cosymplectic manifolds fibering over $\mathbb{S}^1$, we develop equivariant analogues of the Wang long exact sequence for de~Rham, foliated, and Poisson cohomology.
These sequences express the corresponding equivariant cohomology groups in terms of the equivariant cohomology of the symplectic fiber and the induced monodromy action.
When the action on the fiber is Hamiltonian, Kirwan's formality theorem \cite{kirwan} further applies to the fiber, yielding an explicit monodromy-controlled description of the equivariant cohomology of the mapping torus.

\begin{maintheorem}[Equivariant Wang sequences and formality]
Let $(M,\alpha,\beta)$ be a compact cosymplectic manifold fibering over $\mathbb{S}^1$ with compact symplectic fiber $L$, and let $G$ be a compact connected Lie group preserving the cosymplectic structure.
Then the equivariant de~Rham, foliated, and Poisson cohomology theories of $M$ fit into natural Wang-type long exact sequences determined by the monodromy action on the corresponding equivariant cohomology of $L$.

If, in addition, the induced $G$-action on the symplectic fiber $L$ is Hamiltonian, then Kirwan formality applies to $L$.
In this case, the Wang sequences give an explicit description of the equivariant cohomology theories of $M$ in terms of the monodromy action on $\mathrm{H}^\bullet(L)$.
\end{maintheorem}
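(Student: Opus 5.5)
We realize $M$ as a mapping torus and run a Wang-type argument in each of the three theories.

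\textbf{Mapping-torus model.} By the results of Tischler and Li quoted above, $M \cong L_\phi = L\times[0,1]/(x,0)\sim(\phi(x),1)$, with $\phi$ a symplectomorphism of $(L,\beta|_L)$. Since $G$ is compact and connected and preserves $\alpha$, it preserves the fibration up to a flow along the Reeb direction. After averaging we may assume that $G$ acts fibrewise and that its action on $L$ commutes with $\phi$ up to isotopy. Connectedness of $G$ makes the induced action of $G$ on $\mathrm{H}^\bullet(L)$ trivial, so the monodromy $\phi^*$ acts on $\mathrm{H}_G^\bullet(L)$ as an $S(\mathfrak g^*)^G$-module map.

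\textbf{Wang sequences.} Cover $\mathbb{S}^1$ by two arcs and pull back to get $G$-invariant open sets $U,V\subset M$, each equivariantly homotopic to $L$, with $U\cap V\simeq L\sqcup L$. The gluing on one component is $\phi$. Equivariant de Rham cohomology in the Cartan model satisfies Mayer--Vietoris, since $G$ is compact and invariant partitions of unity exist. The Mayer--Vietoris sequence collapses to
\[
\cdots\to \mathrm{H}_G^{k}(M)\to \mathrm{H}_G^{k}(L)\xrightarrow{\ \phi^*-\mathrm{id}\ } \mathrm{H}_G^{k}(L)\to \mathrm{H}_G^{k+1}(M)\to\cdots .
\]
We then repeat the argument for the other two theories.

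\emph{Foliated cohomology.} Foliated forms on a product $L\times I$ are smooth families of forms on $L$. The leafwise Poincaré lemma in the transverse parameter fails: one gets $C^\infty(I)\otimes \mathrm{H}^\bullet(L)$ rather than $\mathrm{H}^\bullet(L)$. We therefore work directly on the mapping torus. Writing foliated classes as $\phi$-twisted periodic families gives
\[
\mathrm{H}^\bullet_G(\mathcal F)\cong \{f:\mathbb R\to \mathrm{H}^\bullet_G(L)\ :\ f(t+1)=\phi^*f(t)\},
\]
suitably completed. This is the foliated ``Wang'' statement.

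\emph{Poisson cohomology.} We use the known splitting for cosymplectic Poisson structures, $\mathrm{H}^\bullet_\Pi(M)\cong \mathrm{H}^\bullet(\mathcal F)\oplus \mathrm{H}^{\bullet-1}(\mathcal F)$, obtained from the Reeb field. Since the Reeb field is $G$-invariant after averaging, the splitting holds equivariantly in Ginzburg's model, and the Poisson sequence follows from the foliated one.

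\textbf{Formality step.} If the $G$-action on $L$ is Hamiltonian, Kirwan's theorem gives $\mathrm{H}_G^\bullet(L)\cong S(\mathfrak g^*)^G\otimes \mathrm{H}^\bullet(L)$ as $S(\mathfrak g^*)^G$-modules. We must check that this isomorphism can be chosen compatibly with $\phi^*$; this is the main obstacle. I would first show that $\phi$ can be deformed so that $\phi$ preserves a $G$-moment map up to a constant, which makes $\phi^*$ act on the Cartan complex through equivariant extensions. Kirwan's splitting via equivariantly closed extensions is canonical up to filtration, so $\phi^*$ is upper triangular with diagonal $\mathrm{id}\otimes\phi^*$. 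Because $S(\mathfrak g^*)^G$ is a free graded module, a filtration/spectral-sequence argument then yields
\[
\ker(\phi^*-\mathrm{id})=S(\mathfrak g^*)^G\otimes \ker(\phi^*-\mathrm{id}|_{\mathrm{H}(L)}),
\]
and likewise for the cokernel. Substituting into the Wang sequences gives $\mathrm{H}_G^\bullet(M)\cong S(\mathfrak g^*)^G\otimes \mathrm{H}^\bullet(M)$, an explicit formula in terms of invariants and coinvariants of the monodromy, and the analogous formulas for the foliated and Poisson theories.
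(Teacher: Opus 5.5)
Your three Wang arguments follow the paper's route: equivariant Mayer--Vietoris over two arcs, $\phi$-twisted periodic families $=\Gamma(\mathcal{H}^\bullet_G(L))$ (the paper justifies your ``suitably completed'' step via Bertelson's K\"unneth and Hausdorffness argument), and the $G$-invariant Reeb splitting $\kappa$. You also correctly isolate a point the paper takes for granted. It writes the Wang map as $(\varphi^\bullet-\mathrm{id})\otimes\mathrm{id}_{\operatorname{Sym}^\bullet(\mathfrak g^*)}$, i.e.\ it assumes the monodromy $P=\phi^*$ on $\mathrm{H}^\bullet_G(L)\cong\mathrm{H}^\bullet(L)\otimes S$, with $S=\operatorname{Sym}^\bullet(\mathfrak g^*)^G$, is diagonal.

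Your fix, however, does not close this. Upper-triangularity with diagonal $\mathrm{id}\otimes\phi^*$ holds for \emph{every} degree-preserving $S$-linear endomorphism of the free module $\mathrm{H}^\bullet_G(L)$ inducing $\phi^*$ modulo $S^{>0}$, so deforming $\phi$ buys nothing. It only yields $\operatorname{gr}\ker(P-\mathrm{id})\subseteq S\otimes\ker(\phi^*-\mathrm{id})$, which is an inequality of dimensions degree by degree. Freeness does not make the spectral sequence of the filtered map $P-\mathrm{id}$ degenerate. Concretely, take $S=\mathbb R[u]$ with $\deg u=2$, $\mathrm{H}=\langle 1,b\rangle$ with $\deg b=2$, $\phi^*=\mathrm{id}$, $P(1)=1$ and $P(b)=b+u\cdot 1$. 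Then the kernel of $P-\mathrm{id}$ in degree $2$ is spanned by $u\cdot 1$ alone, whereas $S\otimes\ker(\phi^*-\mathrm{id})$ is $2$-dimensional there. This is exactly the shape $\phi^*[\beta-\mu]=[\beta-\mu]-c$ would have if $\phi^*\mu=\mu+c$ with $c\neq 0$. That particular case is excluded by compactness: $\mu\circ\phi^n=\mu+nc$ is bounded, so $c=0$, and no deformation of $\phi$ is needed. But this controls one class, not the whole off-diagonal part of $P$.

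You need a genuinely geometric input. One route is to prove that $M$ itself is equivariantly formal and then count dimensions.
\begin{itemize}
\item Reduce to a maximal torus $T\subseteq G$. Since $R$ is $G$-invariant and $\mathcal L_R\beta=0$, each $R\mu^X$ is constant, hence zero (look at a maximum). So the moment map of $\underline\beta=\beta-\mu$ is Reeb-invariant.
\item For generic $\xi$, $\mu^\xi$ is then Morse--Bott on the compact manifold $M$ with critical set $M^T$, because $\iota_{\xi^\#}\beta=0$ and $\alpha(\xi^\#)=0$ force $\xi^\#=0$. Its negative normal bundles are complex with nonzero weights, so the Atiyah--Bott argument makes it equivariantly perfect and $\mathrm{H}^\bullet_T(M)$ is free.
\item Write $\kappa_k=\dim\ker(P-\mathrm{id})|_{\mathrm{H}^k_G(L)}$ and $d_k=\dim\big(S\otimes\ker(\phi^*-\mathrm{id})\big)^k$. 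The Wang sequence gives $\dim\mathrm{H}^k_G(M)=\kappa_k+\kappa_{k-1}$, formality gives $d_k+d_{k-1}$, and induction on $k$ yields $\kappa_k=d_k$ (and likewise for cokernels).
\end{itemize}

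Separately, your reduction ``after averaging, $G$ acts fibrewise'' is false. For connected $G$ preserving $\alpha$, $\alpha(X^\#)$ is a constant, and if it is nonzero nothing removes it. An example is $\mathbb S^1$ rotating the base of $L\times\mathbb S^1$: the sets $\pi^{-1}(U)$ are not invariant, and formality genuinely fails, since $\mathrm{H}^\bullet_{\mathbb S^1}(L\times\mathbb S^1)\cong\mathrm{H}^\bullet(L)$. Leaf-preservation ($\iota_{X^\#}\alpha=0$, i.e.\ $\alpha$ equivariantly closed) must be assumed, as the paper does. Then $\phi=\Phi^c_R$ commutes with $G$ exactly because $R$ is $G$-invariant. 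Commuting only up to isotopy would not even define $\phi^*$ on $\mathrm{H}^\bullet_G(L)$.
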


These results place cosymplectic geometry within the broader equivariant framework that has proved so fruitful in symplectic and Poisson geometry, and they suggest that many familiar phenomena from Hamiltonian geometry persist, in a suitably adapted form, in the realm of codimension-one symplectic foliations.
Further connections with reduction, localization phenomena, and convexity for cosymplectic manifolds will be studied in a companion article \cite{miranda_hamiltonian_2026}.

\subsection*{Structure of the paper}

\Cref{sec:preliminaries} reviews the background material needed throughout the article: foliations and foliated cohomology, Poisson geometry and the modular class, equivariant cohomology in the Cartan model, and Ginzburg's construction of equivariant Poisson cohomology.

\Cref{sec:equiv-coorient-codim1-foliations} studies equivariant, co-orientable, codimension-one foliations.
We establish equivariant analogues of the long exact sequences in foliated cohomology and prove that the relevant splittings may be chosen $G$-invariantly.

\Cref{sec:equivariant-obstruction-classes} introduces equivariant cosymplectic structures and establishes the equivariant obstruction theory.
We define the equivariant obstruction classes, prove that their vanishing characterizes the existence of invariant cosymplectic structures, and connect the first obstruction class to equivariant unimodularity of the associated Poisson structure.

\Cref{sec:cohomology-cosymplectic} computes the de~Rham, foliated, and Poisson cohomology groups of compact cosymplectic manifolds using their description as symplectic mapping tori.

\Cref{sec:equiv-cohomology-cosymplectic} develops the equivariant counterparts of these cohomology theories and establishes the corresponding Wang-type exact sequences and formality consequences.

\section{Preliminaries} \label{sec:preliminaries}

\subsection{Foliations and foliated cohomology}

A foliation provides a decomposition of a manifold into immersed submanifolds (the \emph{leaves}) that locally resemble parallel affine subspaces. One way to encode this information is in terms of the involutive distribution generated by tangent vector fields to all leaves.

\begin{definition}
    A \emph{(regular) foliation} of dimension $k$ on a smooth manifold $M$ is a rank-$k$ subbundle $\mathrm{T}\F \subset \mathrm{T}M$ which is \emph{involutive}, i.e., $[X,Y]\in\Gamma(\mathrm{T}\F)$ for all $X,Y\in\Gamma(\mathrm{T}\F)$. Its integral manifolds are called the \emph{leaves} of the foliation. We will usually denote the space of sections $\Gamma(\mathrm{T} \mathcal{F})$ by $\mathfrak{X}(\mathcal{F})$.
\end{definition}

One may alternatively define a regular foliation by its annihilator ideal, i.e., the ideal of smooth forms satisfying
\begin{equation}
    \mathcal{I}^k (\mathcal{F}) = \Set{ \omega \in \Omega^k(M) \given \omega(X_1, \ldots, X_k) = 0 \text{ for all } X_i \in \mathfrak{X}(\mathcal{F})  }.
\end{equation}

The involutivity of the distribution $\mathcal{F}$ is reflected in the characterization in terms of the characteristic ideal following Frobenius' theorem.

\begin{theorem}[Frobenius]
    Let $\mathcal{D} \subset \mathrm{T}M$ be a rank-$k$ subbundle. The following are equivalent:
    \begin{enumerate}
        \item $\mathcal{D}$ is integrable,
        \item $\mathcal{D}$ is involutive, and
        \item the ideal $\mathcal{I}^\bullet(\mathcal{D})\subset \Omega^\bullet(M)$ is differential, i.e., $\diff \mathcal{I}^\bullet (\mathcal{D}) \subset \mathcal{I}^{\bullet + 1}(\mathcal{D})$.
    \end{enumerate}
\end{theorem}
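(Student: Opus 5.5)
The implications (1)$\Rightarrow$(2)$\Rightarrow$(3)$\Rightarrow$(1) can be proved in a cycle, working locally throughout since integrability is a local property.

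For (1)$\Rightarrow$(2), suppose every point lies in an integral manifold $N$ with $\mathrm{T}_pN=\mathcal{D}_p$. Two sections $X,Y$ of $\mathcal{D}$ restrict to vector fields tangent to $N$. The bracket of vector fields tangent to a submanifold is again tangent to it, because it is the bracket of the restrictions. Hence $[X,Y]_p\in\mathrm{T}_pN=\mathcal{D}_p$.

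For (2)$\Leftrightarrow$(3), I use the invariant formula for the exterior derivative. Take $\omega\in\mathcal{I}^k(\mathcal{D})$ and sections $X_0,\dots,X_k$ of $\mathcal{D}$. In the expression for $\diff\omega(X_0,\dots,X_k)$, the terms $X_i(\omega(\dots))$ vanish, since $\omega$ kills tuples of sections of $\mathcal{D}$. The bracket terms $\omega([X_i,X_j],\dots)$ also vanish when $\mathcal{D}$ is involutive, so $\diff\omega\in\mathcal{I}^{k+1}$.

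Conversely, suppose the ideal is differential. Locally, choose one-forms $\theta^1,\dots,\theta^{n-k}$ spanning the annihilator of $\mathcal{D}$. Then $\diff\theta^a(X,Y)=-\theta^a([X,Y])$ for $X,Y\in\Gamma(\mathcal{D})$. Since $\diff\theta^a\in\mathcal{I}^2$, the left-hand side vanishes, so $[X,Y]$ is annihilated by every $\theta^a$ and lies in $\mathcal{D}$. Pointwise annihilation by all $\theta^a$ characterises $\mathcal{D}$ because $\mathcal{D}$ is a subbundle of constant rank.

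The real content, and the main obstacle, is (2)$\Rightarrow$(1); I would do this by induction on $k$, proving a flat chart exists. The case $k=1$ is the flow-box theorem for a nonvanishing local section.

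For the inductive step, choose coordinates $x^1,\dots,x^n$ near $p$ such that $\mathcal{D}$ is spanned by $X_i=\partial_{x^i}+\sum_{a>k}f_i^a\,\partial_{x^a}$ for $i\le k$; this is possible because $\mathcal{D}_p$ projects isomorphically onto the span of the first $k$ coordinate directions after reordering. Involutivity gives $[X_i,X_j]\in\mathcal{D}$. The bracket has no $\partial_{x^1},\dots,\partial_{x^k}$ components, and the only element of $\mathcal{D}$ with that property is zero, so $[X_i,X_j]=0$. The flows $\phi^i_t$ of these commuting fields commute. The map $(t_1,\dots,t_k,y)\mapsto\phi^1_{t_1}\circ\cdots\circ\phi^k_{t_k}(0,y)$ is then a local diffeomorphism by the inverse function theorem. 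In these coordinates $\partial_{t_i}=X_i$, so the slices $y=\text{const}$ are integral manifolds. Commutativity is exactly what makes each $\partial_{t_i}$ equal $X_i$ everywhere rather than only at $t=0$. This gives local integral manifolds through every point, and maximal leaves follow by the usual gluing of plaques.
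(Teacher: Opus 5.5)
Your proof is correct, and it is the standard textbook argument. You get (1)$\Rightarrow$(2) by restricting to integral manifolds and (2)$\Leftrightarrow$(3) from the invariant formula for $\diff$. For (2)$\Rightarrow$(1) you use a normalized local frame $X_i=\partial_{x^i}+\sum_{a>k}f_i^a\partial_{x^a}$, which involutivity forces to commute, followed by the simultaneous-flow chart.

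The paper gives no proof of this statement. It quotes Frobenius as classical background, used for instance to define $\diff_{\mathcal{F}}$ on $\Omega^\bullet(\mathcal{F})$ and to write $\diff\alpha=\alpha\wedge\nu$ for a defining form. So the paper relies on citation, while your write-up is self-contained.

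A few points to tidy up:

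\begin{enumerate}
\item The ``induction on $k$'' is vestigial. Your ``inductive step'' never uses an inductive hypothesis. The chart $\Psi(t,y)=\phi^1_{t_1}\circ\cdots\circ\phi^k_{t_k}(0,y)$ works directly for every $k$, and for $k=1$ it \emph{is} the flow-box construction. You can present it as a direct argument.
\item The paper defines $\mathcal{I}^\bullet(\mathcal{D})$ and involutivity in terms of global forms and global sections. You apply these hypotheses to the local forms $\theta^a$ and to the local frame $X_i$. That needs the routine bump-function localization, for example $\chi\theta^a\in\mathcal{I}^1(\mathcal{D})$ with $\diff(\chi\theta^a)=\diff\theta^a$ near $p$, and similarly for brackets of local sections. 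It is worth one sentence.
\item In (2)$\Rightarrow$(3), avoid reusing $k$ for both the rank of $\mathcal{D}$ and the degree of $\omega$.
\end{enumerate}
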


Let us denote the space of forms in the foliation by $\Omega^k(\mathcal{F}) \coloneq \Gamma \bigwedge^k \mathrm{T}^* \mathcal{F}$. The dual map $j \colon \mathrm{T}^* M \to \mathrm{T}^* \mathcal{F}$ to the inclusion $i \colon \mathrm{T} \mathcal{F} \to \mathrm{T} M$ induces a map at the level of sections $j \colon \Omega^\bullet(M) \to \Omega^\bullet(\mathcal{F})$. Since the characteristic ideal $\mathcal{I}^\bullet(\mathcal{F}) = \ker j$ is stable under the differential $\diff$, the exterior differential projects to an operator
\begin{equation}
    \diff_{\mathcal{F}} \colon \Omega^\bullet(\mathcal{F}) \longrightarrow \Omega^{\bullet  + 1}(\mathcal{F})
\end{equation}
called the \emph{foliated differential}. We have, by definition, that $\diff_{\mathcal{F}}^2 = 0$. Therefore, we have a cochain complex whose cohomology groups are denoted by $\mathrm{H}^\bullet (\mathcal{F})$ and called \emph{foliated cohomology groups}.

\subsection{Fundamentals of Poisson geometry} \label{ssec:poisson-geometry}

A Poisson manifold is a smooth manifold $M$ equipped with a bilinear bracket
\[
\{\cdot,\cdot\}\colon \mathcal{C}^\infty(M)\times \mathcal{C}^\infty(M)
\longrightarrow \mathcal{C}^\infty(M),
\]
which is a Lie bracket and satisfies the Leibniz rule in each argument.

Since $\{\cdot,\cdot\}$ is a derivation in each argument, it is induced by a unique bivector field
\[
    \Pi \in \mathfrak{X}^2(M).
\]
Equivalently, for all $f,g\in \mathcal{C}^\infty(M)$,
\begin{equation}
    \{f,g\} = \Pi(\diff f,\diff g).
\end{equation}
The Jacobi identity for $\{\cdot,\cdot\}$ is equivalent to
\[
    [\Pi,\Pi]=0,
\]
where $[\cdot,\cdot]$ denotes the Schouten--Nijenhuis bracket. The bivector $\Pi$ is called a \emph{Poisson structure}, and the pair $(M,\Pi)$ is called a \emph{Poisson manifold} \cite{weinstein_local_1983,cannas_da_silva_geometric_1999,dufour_normal_2011}.

Since the function $\{f, \cdot\} \colon \mathcal{C}^\infty(M) \to \mathcal{C}^\infty(M)$ is a derivation, it corresponds to a unique vector field $X_f \in \mathfrak{X}(M)$ called the \emph{Hamiltonian vector field} of $f$. This class of vector fields is fundamental in the study of the global structure of Poisson manifolds: the distribution $\mathcal{D}$ generated by all Hamiltonian vector fields is involutive and locally finitely generated, and hence integrates by the Stefan--Sussman theorem to a foliation $\mathcal{F}_\Pi$ called the \emph{symplectic foliation} of $M$. This foliation is singular, in general, and the restriction of $\Pi$ to each leaf induces a symplectic form. Therefore, Poisson manifolds decompose in symplectic pieces. 

Equivalently, one may recover Hamiltonian vector fields and the symplectic foliation $\mathcal{F}_\Pi$ in terms of the vector bundle morphism $\Pi^\sharp \colon \mathrm{T}^*M \to \mathrm{T}M$, defined as $\Pi^\sharp(\alpha) = \iota_\alpha \Pi$ and called the \emph{anchor map}. In this notation, we have $X_f = \Pi^\sharp(\diff f)$ and $\mathcal{F}_\Pi$ is the distribution corresponding to $\operatorname{im} \Pi^\sharp \subseteq \mathrm{T}M$.

The infinitesimal structure of a Poisson tensor is encoded by the operator
\[
    \diff_\Pi \coloneqq [\Pi,\cdot],
\]
acting on multivector fields $\mathfrak{X}^\bullet(M)$. Since $[\Pi,\Pi]=0$, one has $\diff_\Pi^2=0$. The cochain complex $(\mathfrak{X}^\bullet(M), \diff_\Pi)$ is called the \emph{Lichnerowicz complex}, and its corresponding cohomology groups $\mathrm{H}^\bullet_\Pi(M)$ are called \emph{Poisson cohomology groups}. These objects govern deformations and symmetries of $\Pi$.

Let us assume, for the sake of simplicity, that $(M,\Pi)$ is orientable and fix a volume form $\Theta$. The divergence operator $\operatorname{div}_\Theta \colon \mathfrak{X}(M) \to \mathcal{C}^\infty(M)$ applied to Hamiltonian vector fields induces a map $f \mapsto \operatorname{div}_\Theta X_f$ which turns out to be a derivation. Thus, it corresponds to a unique vector field $X^\Theta \in \mathfrak{X}(M)$ called the \emph{modular vector field} of $\Theta$. This vector field is, by definition, characterized by the relation
\begin{equation}
    \mathcal{L}_{X_f}\Theta = (\mathcal{L}_{X^\Theta} f) \,\Theta
\end{equation}
for all $f \in \mathcal{C}^\infty(M)$.

The modular vector field $X^\Theta$ is always a Poisson vector field and tangent to the symplectic leaves of $M$ at regular points. Moreover, for a different volume form $\Xi = f \Theta$ the corresponding modular vector fields are related by
\begin{equation} \label{eq:mod-vf-coboundary}
    X^\Xi = X^\Theta - X_{\log \lvert f \rvert}.
\end{equation}
Therefore, the cohomology class $[X^\Theta] \in \mathrm{H}_\Pi^1(M)$ is well-defined and independent of the choice of volume form $\Theta$. It is called the \emph{modular class} of $(M, \Pi)$ \cite{weinstein_modular_1997}, and its vanishing characterizes unimodular Poisson structures. Recall that a Poisson manifold is unimodular if and only if there exists a volume form $\Theta \in \Omega^n(M)$ which is invariant under all Hamiltonian vector fields.

A particularly interesting class of Poisson manifolds arises from regular symplectic foliations, i.e., a regular foliation $\mathcal{F}$ on $M$ equipped with a foliated two-form $\omega \in \Omega^2(\mathcal{F})$ which is $\diff_{\mathcal{F}}$-closed and non-degenerate. Under this assumption we may define the Hamiltonian vector field $X_f \in \mathfrak{X}(\mathcal{F})$ of a smooth function $f \in \mathcal{C}^\infty(M)$ as the unique solution to Hamilton's equations $\iota_{X_f} \omega = - \diff_{\mathcal{F}} f$, and a Poisson bracket as
\begin{equation}
    \{f, g\} = \omega(X_f, X_g).
\end{equation}
The corresponding Poisson bivector is simply given $\Pi = i\omega^\sharp(\omega)$, where $\omega^\sharp \colon \mathrm{T}^* \mathcal{F} \to \mathrm{T} \mathcal{F}$ is the inverse map to $\omega^\flat(X) = \iota_X \omega$ and $i \colon \mathrm{T} \mathcal{F} \to \mathrm{T} M$ is the natural inclusion of tangent vectors. We may also write $\Pi_{\mathcal{F}} \coloneqq \omega^\sharp(\omega) \in \mathfrak{X}^2(\mathcal{F})$ to denote the restriction of the bivector $\Pi$ to the complex of multivector fields tangent to the foliation $\mathcal{F}$. The anchor map of the Poisson structure satisfies, consequently
\[
    \Pi^\sharp = i \circ (-\omega^\sharp) \circ i^*,
\]
i.e., the diagram
\begin{equation} \label{eq:anchor-maps-reg-poisson}
    \begin{tikzcd}
    {\mathrm{T}^* M} \arrow[r, "\Pi^\sharp"] \arrow[d, "i^*"] & {\mathrm{T}M} \\
    {\mathrm{T}^* \mathcal{F}} \arrow[r, "-\omega^\sharp"] & {\mathrm{T} \mathcal{F}} \arrow[u, "i"]
    \end{tikzcd}
\end{equation}
commutes.

\subsection{Equivariant cohomology}
\label{ssec:equiv-cohomology}

Equivariant cohomology is a refinement of ordinary singular cohomology, first introduced by Borel \cite{borel_1960_seminar} to compute the cohomology of a general quotient space $M/G$ even when the group action is poorly behaved. The essential idea is to replace $M$ by the product $M \times \mathrm{E}G$, which is weakly homotopically equivalent to $M$, and then consider the \emph{homotopy quotient} $M_G \coloneqq M \times_G \mathrm{E}G$. Here, $\mathrm{E}G$ is a weakly contractible space which carries a free $G$-action. The \emph{equivariant cohomology} of $M$ is taken to be the cohomology of the homotopy quotient, i.e.,
\begin{equation} \label{eq:def-equiv-cohomology-homotopy}
    \mathrm{H}_G^\bullet(M) \coloneqq \mathrm{H}^\bullet(M_G).
\end{equation}

Similarly to de~Rham's theorem in the non-equivariant case, there exists a differential complex $\Omega_G^\bullet(M)$ which computes the torsion-free part of equivariant cohomology (cf. \cite[Thm.\,~A.1]{tu_introductory_2020}). In the assumptions we need that the Lie group is compact and connected. This description is known as \emph{Cartan's model of equivariant cohomology}, and the cochain complex is explicitly given by
\begin{equation} \label{eq:cartan-complex-def}
    \Omega_G^\bullet(M) \coloneqq \big( \Omega^\bullet(M) \otimes \operatorname{Sym}^\bullet(\mathfrak{g}^*) \big)^G.
\end{equation}
We define the grading of the previous complex by assigning $\deg \omega = i + 2j$ for $\omega \in \Omega^i(M) \otimes \operatorname{Sym}^j(\mathfrak{g}^*)$. The $G$-action on $\Omega^\bullet(M)$ is given by the pullback of differential forms, the $G$-action on $\operatorname{Sym}^\bullet(\mathfrak{g}^*)$ is given by the coadjoint action, and the action on the tensor product is induced from the previous ones. Elements of the complex $\Omega_G^\bullet(M)$ are known as \emph{equivariant differential forms} and we will usually distinguish them from regular de Rham forms by means of an underscore, i.e., we write an equivariant form as $\underline{\alpha} \in \Omega^\bullet_G(M)$.

To describe the equivariant differential, let us fix a basis $X_1, \ldots, X_m$ of $\mathfrak{g}$ and let $\sigma^1, \ldots, \sigma^m$ be a dual basis. If $X \in \mathfrak{g}$, we denote by $X^\#\in \mathfrak{X}(M)$ its fundamental vector field. In this notation, the equivariant differential acts on an element $\omega \otimes p \in \Omega_G^\bullet(M)$ as
\begin{equation} \label{eq:equiv-diff-elem-def}
    \diff_G(\omega \otimes p) = \diff \omega \otimes p - \sum_{i = 1}^m (\iota_{X_i^\#} \omega) \otimes (\sigma^i p)
\end{equation}
and is extended by linearity. In a different notation,
\begin{equation} \label{eq:equiv-diff-tensor-def}
    \diff_G = \diff \otimes \operatorname{id} - \sum_{i = 1}^m \iota_{X_i^\#} \otimes \sigma^i.
\end{equation}

We can abstract the previous construction to obtain a broader definition of equivariant cohomology. We follow the presentation of \cite[Sec.\,2.1]{ginzburg_equivariant_1999}. Let $(A^\bullet,\diff)$ be a cochain complex of Fréchet spaces equipped with a smooth $G$-action commuting with $\mathrm{d}$. Denote by $\mathcal{L}_X\colon A^\bullet \to A^\bullet$ the infinitesimal $G$-action generated by $X\in\mathfrak{g}$. We say that $(A^\bullet,\mathrm{d})$ is a \emph{$G$-differential complex} if there exists a linear map $\mathfrak{g} \to \operatorname{End}^{-1}(A^\bullet)$, $X \mapsto \iota_X$, such that
\begin{subequations}
\begin{align}
    \iota_X\iota_Y+\iota_Y\iota_X &= 0, \\
    g\iota_Xg^{-1} &= \iota_{\operatorname{Ad}_g X}, \\
    \mathcal{L}_X &= \diff \iota_X + \iota_X \diff.
\end{align}
\end{subequations}

Given a $G$-differential complex, its Cartan complex is
\[
    \Omega_G^\bullet(A) \coloneqq \left(A^\bullet \otimes \operatorname{Sym}^\bullet\mathfrak{g}^*\right)^G,
\]
where elements of $\operatorname{Sym}^k\mathfrak{g}^*$ are assigned degree $2k$ and its differential is defined by expression \eqref{eq:equiv-diff-elem-def}. On the $G$-invariant subcomplex, one has $\diff_G^2=0$. This complex and its differential are called the \emph{Cartan complex}, or \emph{equivariant complex}, and the \emph{equivariant differential} of $A^\bullet$, respectively. Its cohomology groups are called the \emph{equivariant cohomology groups} of $A^\bullet$ and are denoted by $\mathrm{H}_G^\bullet(A)$.

Let us now consider a Poisson manifold $(M,\Pi)$ equipped with a Poisson action $\rho$ of a compact connected Lie group $G$, and denote by $\tau\colon\mathfrak{g}\to\mathfrak{X}^1(M)$ the corresponding infinitesimal action. The infinitesimal action alone does not provide the degree $-1$ contraction operators required to make the Poisson complex $(\mathfrak{X}^\bullet(M),\diff_\Pi)$ into a $G$-differential complex. To construct such operators, one introduces an equivariant pre-momentum map $\lambda \colon \mathfrak{g} \to \Omega^1(M)$ satisfying $\tau(X)=\Pi^\sharp (\lambda(X) )$ for all $X\in\mathfrak{g}$. Thus, the infinitesimal action $\tau$ and the pre-momentum map $\lambda$ are related by the anchor map $\Pi^\sharp\colon \mathrm{T}^*M \to \mathrm{T}M$. The natural pairing between differential forms and multivector fields then defines the contraction operators associated with the resulting $G$-differential structure.

\begin{proposition}[{Ginzburg~\cite[Sec.\,4.1]{ginzburg_equivariant_1999}}] \label{prop:equiv-poisson-dgga}
    In the previous assumptions, the maps $\diff_\Pi$, $\iota_X \coloneqq \iota_{\lambda(X)} \colon \mathfrak{X}^{\bullet}(M) \to \mathfrak{X}^{\bullet - 1}(M)$ and $\mathcal{L}_X \coloneqq \mathcal{L}_{\tau(X)} \colon \mathfrak{X}^\bullet(M) \to \mathfrak{X}^\bullet(M)$ endow the Lichnerowicz complex $(\mathfrak{X}^\bullet(M), \diff_\Pi)$ with the structure of a differential graded $\mathfrak{g}$-algebra.
\end{proposition}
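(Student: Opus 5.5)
We check the three identities defining a $G$-differential complex (equivalently, a differential graded $\mathfrak{g}$-algebra), together with compatibility of the operators with the wedge product. The hypotheses used are: $\lambda$ is equivariant, $\tau(X)=\Pi^\sharp\lambda(X)$, and the action is Poisson, so $\mathcal{L}_{\tau(X)}\Pi=0$. Compatibility with the product is immediate. Contraction $\iota_{\lambda(X)}$ by a one-form is a degree $-1$ derivation of $(\mathfrak{X}^\bullet(M),\wedge)$, $\mathcal{L}_{\tau(X)}$ is a degree $0$ derivation, and $\diff_\Pi=[\Pi,\cdot]$ is a degree $+1$ derivation by the graded Leibniz rule for the Schouten bracket. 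Also, $\mathcal{L}_{\tau(X)}$ commutes with $\diff_\Pi$, because $\mathcal{L}_{\tau(X)}[\Pi,P]=[\mathcal{L}_{\tau(X)}\Pi,P]+[\Pi,\mathcal{L}_{\tau(X)}P]$ and the first term vanishes.

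The algebraic identities come first.

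\emph{Anticommutation.} Contractions by one-forms anticommute: $\iota_{\lambda(X)}\iota_{\lambda(Y)}+\iota_{\lambda(Y)}\iota_{\lambda(X)}=0$, since $\iota_\alpha\iota_\beta=\iota_{\beta\wedge\alpha}$.

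\emph{Conjugation.} From $G$-equivariance of $\lambda$, namely $g^*$-compatibility $g\cdot\lambda(X)=\lambda(\operatorname{Ad}_gX)$, naturality of contraction gives $g\,\iota_{\lambda(X)}\,g^{-1}=\iota_{g\cdot\lambda(X)}=\iota_{\lambda(\operatorname{Ad}_g X)}$.

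\emph{Bracket compatibility.} Differentiating the conjugation identity also yields $[\mathcal{L}_X,\iota_Y]=\iota_{[X,Y]}$, the remaining dg $\mathfrak{g}$-algebra relation.

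The main step is the Cartan formula $\mathcal{L}_{\tau(X)}=\diff_\Pi\iota_{\lambda(X)}+\iota_{\lambda(X)}\diff_\Pi$. Both sides are degree $0$ derivations of $\mathfrak{X}^\bullet(M)$. For the left side this is clear. For the right side, it is the graded commutator $[\diff_\Pi,\iota_{\lambda(X)}]$ of two odd derivations, hence an even derivation. It therefore suffices to compare them on generators: functions and vector fields.

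On a function $f$, we have $\iota_{\lambda(X)}f=0$ and $\diff_\Pi f=[\Pi,f]=\pm X_f=\pm\Pi^\sharp\diff f$. Hence $\iota_{\lambda(X)}\diff_\Pi f=\pm\langle\lambda(X),\Pi^\sharp\diff f\rangle=\mp\langle\diff f,\Pi^\sharp\lambda(X)\rangle$ by skew-symmetry of $\Pi$, and this equals $\tau(X)f=\mathcal{L}_{\tau(X)}f$ once the sign conventions for $\diff_\Pi$ and $\Pi^\sharp$ are fixed consistently.

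On a vector field $V$, the formula reads
\[
[\Pi,\langle\lambda(X),V\rangle]+\iota_{\lambda(X)}[\Pi,V]=[\tau(X),V].
\]
I would verify this using the standard identity
\[
\iota_\alpha[\Pi,V]=-\Pi^\sharp(\mathcal{L}_V\alpha)+\mathcal{L}_V(\Pi^\sharp\alpha)-\ldots,
\]
that is, $(\mathcal{L}_V\Pi)^\sharp\alpha=[V,\Pi^\sharp\alpha]-\Pi^\sharp\mathcal{L}_V\alpha$. Combining this with $\Pi^\sharp\diff\langle\alpha,V\rangle$ and Cartan's formula $\mathcal{L}_V\alpha=\diff\iota_V\alpha+\iota_V\diff\alpha$ leaves a residual term $\Pi^\sharp(\iota_V\diff\lambda(X))$.

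This is where I expect the obstacle. The residual term vanishes when $\lambda(X)$ is closed, for instance when $\lambda(X)=\diff\mu^X$, or more generally when $\iota_V\diff\lambda(X)$ lies in $\ker\Pi^\sharp$. So I would first check precisely what Ginzburg's definition of a pre-momentum map assumes, presumably closedness of $\lambda(X)$ or a condition on $\diff\lambda(X)$ relative to $\Pi$, and invoke it at exactly this point. Once the generator checks hold, the derivation property extends the Cartan formula to all of $\mathfrak{X}^\bullet(M)$, completing the proof.
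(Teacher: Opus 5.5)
Your route is the standard one; the paper gives no proof and only cites Ginzburg. The algebraic identities follow from equivariance of $\lambda$ and naturality of contraction. $\mathcal{L}_{\tau(X)}$ commutes with $\diff_\Pi$ because the action is Poisson. Since both sides of the Cartan formula are even derivations, it suffices to compare them on functions and vector fields.

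Your residual is also correct: on a vector field one gets
\[
[\diff_\Pi,\iota_{\lambda(X)}]V=\mathcal{L}_{\tau(X)}V\pm\Pi^\sharp\big(\iota_V\diff\lambda(X)\big).
\]
This term cannot be removed using only the hypotheses as the paper states them, namely equivariance of $\lambda$ and $\Pi^\sharp\lambda=\tau$. So the gap you flagged is real, and the statement as written is false. Here is a counterexample. On $\mathbb{R}^3$ with $\Pi=\partial_x\wedge\partial_y$, let $G=\mathbb{S}^1$ rotate the $(x,y)$-plane. Take the closed lift $\lambda_0(X)=\pm\diff\big(\tfrac{x^2+y^2}{2}\big)$ and set $\lambda(X)=\lambda_0(X)+(x^2+y^2)\,\diff z$. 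This $\lambda(X)$ is still invariant, hence equivariant since $G$ is abelian. It still satisfies $\Pi^\sharp\lambda(X)=\tau(X)$, because $\Pi^\sharp\diff z=0$. But $[\Pi,\partial_z]=0$, so
\[
[\diff_\Pi,\iota_{\lambda(X)}]\partial_z=\diff_\Pi(x^2+y^2)=\pm2(x\partial_y-y\partial_x)\neq0=\mathcal{L}_{\tau(X)}\partial_z .
\]
In the symplectic case the lift is forced to be $\pm\iota_{X^\#}\omega$, which is closed, so the issue never appears there. For degenerate $\Pi$ the lift is only determined modulo $\ker\Pi^\sharp$, and that freedom is exactly what breaks the Cartan formula.

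So what is missing is a hypothesis, not an idea. Each $\lambda(X)$ must be closed, or at least satisfy $\Pi^\sharp(\iota_V\diff\lambda(X))=0$ for all $V$. As far as I recall, closedness is built into Ginzburg's notion of an equivariant cotangent lift: his lifts take values in closed one-forms. The paper's paraphrase ("equivariant pre-momentum map with $\tau=\Pi^\sharp\lambda$") drops it. Once you assume it, your check on generators finishes the proof; the remaining identities you listed are fine.

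Closedness also holds where the paper uses the proposition for an equivariant cosymplectic structure. There $\iota_R\beta=0$ gives $s\lambda(X)=-\iota_{X^\#}\beta$, and by \eqref{eq:equiv-fol-symplectic-closed} this equals $\diff\mu^X$, which is exact and hence closed.
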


From the previous proposition, the equivariant Poisson cohomology groups $\mathrm{H}_{\Pi, G}^\bullet(M)$ can be defined following the general description of equivariant cohomology for Fréchet $G$-differential complexes, and the equivariant Poisson differential is explicitly given by
\begin{equation}
    \diff_{\Pi, G} = \diff_\Pi \otimes \operatorname{id}_{\operatorname{Sym}^\bullet(\mathfrak{g}^*)} - \sum_{i = 1}^m \iota_{\lambda(X_i)} \otimes \sigma^i.
\end{equation}

\section{Equivariant co-orientable, codimension one foliations} \label{sec:equiv-coorient-codim1-foliations}

In order to study the equivariant counterparts of the obstruction classes of Guillemin, Miranda, and Pires, we must first explore the properties of the objects they characterize. In this section we begin by discussing the notion of equivariant, co-orientable, codimension-one foliations and their cohomological properties.

\subsection{Preliminaries on co-orientable, codimension-one foliations}

Let us consider a codimension-one foliation $\mathcal{F}$ on $M$: the inclusion of bundles $i \colon \mathrm{T} \mathcal{F} \to \mathrm{T} M$ gives rise to the short exact sequence
\begin{equation} \label{eq:ses-tangent-foliation}
    \begin{tikzcd}[column sep=small]
    	0 & {\mathrm{T} \mathcal{F}} & {\mathrm{T} M} & {\mathrm{N} \mathcal{F}} & 0
    	\arrow[from=1-1, to=1-2]
    	\arrow["i", from=1-2, to=1-3]
    	\arrow[from=1-3, to=1-4]
    	\arrow[from=1-4, to=1-5]
    \end{tikzcd}
\end{equation}
We say $\mathcal{F}$ is \emph{co-orientable} if $\mathrm{N} \mathcal{F}$ is a trivial line bundle. By looking at the dual short exact sequence
\begin{equation} \label{eq:ses-cotangent-foliation}
    \begin{tikzcd}[column sep=small]
    	0 & {\mathrm{N}^* \mathcal{F}} & {\mathrm{T}^* M} & {\mathrm{T}^* \mathcal{F}} & 0
    	\arrow[from=1-1, to=1-2]
    	\arrow[from=1-2, to=1-3]
    	\arrow["j", from=1-3, to=1-4]
    	\arrow[from=1-4, to=1-5]
    \end{tikzcd}
\end{equation}
we observe $\mathcal{F}$ is co-orientable if and only if $\mathrm{N}^* \mathcal{F}$ is trivial or, in other words, there exists a nowhere vanishing form $\alpha \in \Omega^1(M)$ such that $\mathcal{F} = \ker \alpha$. Notice that Frobenius' theorem implies we must have
\begin{equation} \label{eq:normal-var-alpha}
    \diff \alpha = \alpha \wedge \nu.
\end{equation}

The short exact sequence of bundles \eqref{eq:ses-cotangent-foliation} induces a short exact sequences of cochain complexes
\begin{equation} \label{eq:ses-cochain-foliated}
    \begin{tikzcd}[column sep=small]
    	0 & {\Omega^\bullet(\mathcal{F}; \mathrm{N}^* \mathcal{F})} & {\Omega^\bullet(M)} & {\Omega^\bullet(\mathcal{F})} & 0
    	\arrow[from=1-1, to=1-2]
    	\arrow[from=1-2, to=1-3]
    	\arrow["j", from=1-3, to=1-4]
    	\arrow[from=1-4, to=1-5]
    \end{tikzcd}
\end{equation}
where $\Omega^\bullet(\mathcal{F}; \mathrm{N}^* \mathcal{F})$ is the complex of foliated forms with coefficients in the conormal bundle. This complex is endowed with the differential $\diff_\nabla$, which is induced from the map $\mathrm{T} \mathcal{F} \to \mathrm{N}^* \mathcal{F}$ given by the Bott connection. This short exact sequence induces a long exact sequence in the corresponding cohomology groups.

To relate this description to the invariants of Guillemin, Miranda, and Pires we choose an explicit trivialization of $\mathrm{N}^* \mathcal{F}$, which amounts to the choice of a defining one-form $\alpha \in \Omega^1(M)$ for $\mathcal{F}$. A choice of splitting for \eqref{eq:ses-cotangent-foliation} is equivalent to a choice of splitting for \eqref{eq:ses-tangent-foliation}, which is tantamount to a retraction $r \colon \mathrm{N} \mathcal{F} \to \mathrm{T}M$ or, equivalently, to a transverse vector field $R \in \mathfrak{X}(M)$ to $\mathcal{F}$. Let us choose any such vector field satisfying $\alpha(R) = 1$.

The induced section at the level of cotangent bundles is given by
\begin{equation} \label{eq:section}
    \begin{array}{rccc}
        s \colon & \mathrm{T}^* \mathcal{F} & \longrightarrow & \mathrm{T}^* M \\
         & [\eta] & \longmapsto & \eta - \alpha \, \iota_R \eta
    \end{array}.
\end{equation}
The map does not depend on the choice of representative since taking $\eta, \eta' \in \mathrm{T}^* M$ such that $[\eta] = [\eta']$, which is equivalent to $\eta' - \eta = f \alpha$ for some $f \in \mathcal{C}^\infty(M)$, we have that
\begin{equation*}
    \eta' - \alpha \iota_R \eta' = \eta + \alpha f - \alpha \iota_R( \eta + \alpha f ) = \eta + \alpha f - \alpha \iota_R \alpha - \alpha f = \eta - \alpha \iota_R \eta.
\end{equation*}
Observe that such a splitting induces an analogous splitting of cochain complexes
\begin{equation} \label{eq:splitting-cochain-foliated}
    \Omega^\bullet(M) \cong \alpha \wedge \Omega^{\bullet - 1}(\mathcal{F}) \oplus \Omega^\bullet(\mathcal{F}).
\end{equation}

We are now in position to describe the structure cochain complex and the induced long exact sequence in cohomology under this choice of trivialization.

\begin{proposition}
    In the previous assumptions, we have
    \begin{equation}
        (\diff s - s \diff_{\mathcal{F}})(\eta) = \alpha \wedge ( \nu \wedge \eta - \mathcal{L}_R \eta).
    \end{equation}
\end{proposition}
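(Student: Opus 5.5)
The plan is a direct computation with the splitting \eqref{eq:section}, extended to forms of all degrees through \eqref{eq:splitting-cochain-foliated}. On a foliated $k$-form represented by $\tilde\eta\in\Omega^k(M)$, the section is $s(\eta)=\tilde\eta-\alpha\wedge\iota_R\tilde\eta$. The first step is to check that this is independent of the representative, exactly as in the degree-one computation. If $\tilde\eta'=\tilde\eta+\alpha\wedge\theta$, then
\[
\alpha\wedge\iota_R(\alpha\wedge\theta)=\alpha\wedge\theta-\alpha\wedge\alpha\wedge\iota_R\theta=\alpha\wedge\theta,
\]
so the correction term absorbs the ambiguity. This lets us pick, whenever convenient, the \emph{normalised representative} with $\iota_R\tilde\eta=0$, or else keep $\tilde\eta$ general so that the role of $\nu$ stays visible.

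The second step expands the two terms. We have $\diff s(\eta)=\diff\tilde\eta-\diff\alpha\wedge\iota_R\tilde\eta+\alpha\wedge\diff\iota_R\tilde\eta$. Using \eqref{eq:normal-var-alpha}, the middle term is $-\alpha\wedge\nu\wedge\iota_R\tilde\eta$. Since $\diff_{\mathcal F}\eta$ is represented by $\diff\tilde\eta$, we get $s(\diff_{\mathcal F}\eta)=\diff\tilde\eta-\alpha\wedge\iota_R\diff\tilde\eta$. Subtracting and applying Cartan's formula $\mathcal L_R=\diff\iota_R+\iota_R\diff$ gives an expression of the form $\alpha\wedge(\pm\mathcal L_R\tilde\eta\pm\nu\wedge(\cdot))$, which visibly lies in $\alpha\wedge\Omega^{\bullet}(\mathcal F)\cong\Omega^\bullet(\mathcal F;\mathrm N^*\mathcal F)$, as it must. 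The bracket may then be reduced modulo $\alpha$, using $\alpha\wedge\alpha=0$.

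The third step, which I expect to be the main obstacle, is matching this output to the precise form $\alpha\wedge(\nu\wedge\eta-\mathcal L_R\eta)$ stated above. This is where all the conventions enter at once. The $\nu$-term arises from $\mathcal L_R\alpha=\iota_R\diff\alpha=\nu-\nu(R)\,\alpha$. It should be read through the identification of $\alpha\wedge\Omega^{\bullet-1}(\mathcal F)$ with conormal-valued forms, where the Bott connection $\diff_\nabla$ acts on the coefficient $\alpha$ precisely through $\nu$. The sign of the Lie-derivative term depends on the ordering convention for pulling $\alpha$ out of the wedge, namely the graded sign $(-1)^k$ when $\alpha$ passes a $k$-form, and on the sign convention for $\nu$ in \eqref{eq:normal-var-alpha}.

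I would first try to fix $\nu$ modulo $\alpha$ by imposing $\nu(R)=0$. Then I would rewrite $\alpha\wedge\mathcal L_R\tilde\eta$ as $\mathcal L_R(\alpha\wedge\tilde\eta)-\mathcal L_R\alpha\wedge\tilde\eta$, so that a $\nu\wedge\eta$ term appears explicitly with the expected sign. The step to verify is whether, under the paper's identification, the term $\mathcal L_R(\alpha\wedge\tilde\eta)$ is what the statement records as $-\alpha\wedge\mathcal L_R\eta$. If instead this sign bookkeeping forces a different combination, the argument yields the corrected identity. That corrected identity is all that is used afterwards, namely that $\diff s-s\diff_{\mathcal F}$ takes values in $\alpha\wedge\Omega^\bullet(\mathcal F)$ and gives the connecting map of the long exact sequence induced by \eqref{eq:ses-cochain-foliated}.
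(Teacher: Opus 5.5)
Your steps 1 and 2 are exactly the paper's argument. The paper also extends $s$ by $s(\eta)=\tilde\eta-\alpha\wedge\iota_R\tilde\eta$, expands $\diff s(\eta)$ using $\diff\alpha=\alpha\wedge\nu$, expands $s(\diff_{\F}\eta)$, and collects terms with Cartan's formula. The problem is that you stop at ``$\pm$'' and then treat the signs as convention-dependent; nothing is left to choose. In $s$, $\alpha$ stands on the left and has degree one, so $\diff(\alpha\wedge\iota_R\tilde\eta)=\diff\alpha\wedge\iota_R\tilde\eta-\alpha\wedge\diff\iota_R\tilde\eta$, with no graded sign to fix. Adding the three terms you already wrote down gives unambiguously
\[
(\diff s-s\diff_{\F})(\eta)=\alpha\wedge\big(\mathcal{L}_R\tilde\eta-\nu\wedge\iota_R\tilde\eta\big).
\]
This is precisely what the paper's proof establishes. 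It computes $s\diff_{\F}\eta-\diff s\eta$ and ends with $\alpha\wedge(\nu\,\iota_R\eta-\mathcal{L}_R\eta)$; its third line should read $-\alpha\wedge\diff\iota_R\eta$. This is also the form used in \Cref{cor:connecting-morphism} to get $\delta=\mathcal{L}_R-j\nu\wedge\iota_R$.

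The displayed statement differs from this by an overall sign and by $\nu\wedge\eta$ in place of $\nu\wedge\iota_R\eta$. It is false as printed, so your step 3 is a dead end, not a bookkeeping issue. Take $\mathbb{R}^3$ with $\alpha=\diff z$, $R=\partial_z$, $\nu=0$, and $\eta=f$ a function. The left side is $(\partial_z f)\,\diff z$, while the printed right side is $-(\partial_z f)\,\diff z$.

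The $\nu\wedge\eta$ term is also structurally wrong. Under $\tilde\eta\mapsto\tilde\eta+\alpha\wedge\theta$, the term $\alpha\wedge\mathcal{L}_R\tilde\eta$ changes by $\alpha\wedge\nu\wedge\theta$, since $\mathcal{L}_R\alpha=\nu-\nu(R)\alpha$. It is exactly $-\alpha\wedge\nu\wedge\iota_R\tilde\eta$ that cancels this change, whereas $\alpha\wedge\nu\wedge\tilde\eta$ does not change at all. Hence the printed right-hand side is not even well defined on foliated forms when $\diff\alpha\neq0$.

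Your step-3 manipulations also misplace the origin of the $\nu$-term. In your own step 2 it comes from $\diff\alpha=\alpha\wedge\nu$ inside $\diff s(\eta)$, not from $\mathcal{L}_R\alpha$. Moreover, rewriting $\alpha\wedge\mathcal{L}_R\tilde\eta=\mathcal{L}_R(\alpha\wedge\tilde\eta)-\mathcal{L}_R\alpha\wedge\tilde\eta$ produces a bare $\nu\wedge\tilde\eta$ with no $\alpha$ in front, which matches neither side.

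The right move is to finish the computation with signs and state the corrected identity above. Then flag the proposition as a misprint of it; the corrected version is the one everything downstream actually uses.
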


\begin{proof}
    Direct computation with expression \eqref{eq:section} shows that
    \begin{align*}
        s \diff_{\mathcal{F}} \eta - \diff s \eta &= s( \diff \eta ) - \diff (\eta - \alpha \iota_R \eta) \\
        &= \diff \eta - \alpha \wedge \iota_R \diff \eta - \diff \eta + \diff \alpha \, \iota_R \eta - \alpha \wedge \diff \iota_R \eta \\
        &= - \alpha \, \iota_R \diff \eta + \alpha \wedge \nu \, \iota_R \eta - \alpha \wedge \iota_R \eta \\
        &= \alpha \wedge (\nu \iota_R \eta - \mathcal{L}_R \eta). \qedhere
    \end{align*}
\end{proof}

\begin{corollary} \label{cor:connecting-morphism}
    In the previous assumptions, we have that:
    \begin{enumerate}
        \item under the trivialization \eqref{eq:section}, the twisted differential is given by $\diff_\nabla = - \diff_{\mathcal{F}} + j\nu \wedge$, and
        \item the connecting morphism of the long exact sequence is given by $\delta = \mathcal{L}_R - j\nu \wedge \iota_R$.
    \end{enumerate}
\end{corollary}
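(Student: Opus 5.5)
We read both parts off the preceding proposition, using the splitting \eqref{eq:splitting-cochain-foliated}. Under the trivialization given by $\alpha$, the subcomplex $\Omega^\bullet(\mathcal{F};\mathrm{N}^*\mathcal{F})$ is identified with $\alpha\wedge\Omega^{\bullet-1}(\mathcal{F})$, that is, with forms $\alpha\wedge s(\theta)$ for $\theta\in\Omega^{\bullet-1}(\mathcal{F})$. A general form on $M$ decomposes as $\alpha\wedge s(\theta)+s(\eta)$.

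\textbf{Part (1).} Compute $\diff(\alpha\wedge s\theta)=\diff\alpha\wedge s\theta-\alpha\wedge\diff s\theta$ and substitute $\diff\alpha=\alpha\wedge\nu$. By the proposition,
\[
\diff s\theta=s\diff_{\mathcal{F}}\theta+\alpha\wedge(\cdots).
\]
The $\alpha\wedge(\cdots)$ term is killed by the outer $\alpha\wedge$. This gives
\[
\diff(\alpha\wedge s\theta)=\alpha\wedge\bigl(\nu\wedge s\theta-s\diff_{\mathcal{F}}\theta\bigr).
\]
Applying $j$ after stripping the factor $\alpha$ (equivalently, using $s$ and $j$ together with $j\circ s=\mathrm{id}$) yields $\diff_\nabla\theta=-\diff_{\mathcal{F}}\theta+j\nu\wedge\theta$. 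Note that $\nu$ is only defined modulo $\alpha$, but $j\nu$ is well defined.

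\textbf{Part (2).} We use the standard zig-zag construction of the connecting map. Take a $\diff_{\mathcal{F}}$-closed $\eta\in\Omega^k(\mathcal{F})$ and lift it to $s\eta\in\Omega^k(M)$. By the proposition,
\[
\diff s\eta=s\diff_{\mathcal{F}}\eta+\alpha\wedge(\nu\,\iota_R\eta-\mathcal{L}_R\eta)=\alpha\wedge(\nu\,\iota_R\eta-\mathcal{L}_R\eta).
\]
Here I would reread the proposition's last line of proof, which contains a term of the form $\nu\wedge\iota_R$ applied to the lift. Under the identification of Part (1), this element corresponds to the class of $j(\mathcal{L}_R\eta-\nu\wedge\iota_R\eta)$ in $\mathrm{H}^k(\mathcal{F};\mathrm{N}^*\mathcal{F})$, up to a sign. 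Projecting with $j$ and fixing that sign by the convention $\alpha\wedge s(\theta)\leftrightarrow\theta$, we obtain $\delta=\mathcal{L}_R-j\nu\wedge\iota_R$. One should also note that this expression is well defined on foliated forms: $\iota_R\eta$ and $\mathcal{L}_R\eta$ are computed on the representative $s\eta$, and changing the representative alters them only by exact or $\alpha$-multiple terms.

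The main obstacle is the sign and ordering bookkeeping. Three sources need to be reconciled:
\begin{itemize}
\item the proposition's statement writes $\nu\wedge\eta$, while its proof produces $\nu\,\iota_R\eta$, and the corollary uses the second form;
\item the sign in commuting $\alpha$ past forms of various degrees;
\item the orientation of the identification $\alpha\wedge\Omega^{\bullet-1}(\mathcal{F})\cong\Omega^{\bullet-1}(\mathcal{F};\mathrm{N}^*\mathcal{F})$.
\end{itemize}
I would fix this identification first so that Part (1) yields exactly $-\diff_{\mathcal{F}}+j\nu\wedge$, and then read off $\delta$ with the same convention.
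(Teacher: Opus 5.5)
Your strategy matches the paper's: write elements of the subcomplex as $\alpha\wedge s(\theta)$, differentiate using $\diff\alpha=\alpha\wedge\nu$, and compute $\delta$ by lifting with $s$. Part (1) is fine. The correction term in $\diff s\theta$ is killed by the outer $\alpha\wedge$, so its sign plays no role there, and $\alpha\wedge\gamma$ depends only on $j\gamma$.

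Part (2), however, has a genuine sign error, which you cover with a convention instead of resolving it.

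\emph{Where the error comes from.} Your display $\diff s\eta=s\diff_{\mathcal F}\eta+\alpha\wedge(\nu\,\iota_R\eta-\mathcal L_R\eta)$ pairs the left-hand side of the proposition's \emph{statement} with the right-hand side of its \emph{proof}. But that proof computes $s\diff_{\mathcal F}\eta-\diff s\eta$, not $\diff s\eta-s\diff_{\mathcal F}\eta$.

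\emph{Why the convention cannot rescue it.} Taken at face value, your display gives $\delta=j\nu\wedge\iota_R-\mathcal L_R$ under $\alpha\wedge s(\theta)\leftrightarrow\theta$. That identification is no longer yours to choose: you fixed it in Part (1) to obtain $\diff_\nabla=-\diff_{\mathcal F}+j\nu\wedge$. There is also no commuting of $\alpha$ past other forms, since $\alpha$ is already leftmost. So ``fixing the sign by the convention'' simply flips the sign without justification.

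\emph{The fix.} A three-line direct computation settles it. Use $\diff(\alpha\wedge\iota_R\eta)=\alpha\wedge\nu\wedge\iota_R\eta-\alpha\wedge\diff\iota_R\eta$ and $\diff\eta=s(\diff\eta)+\alpha\wedge\iota_R\diff\eta$. Together they give
\[
\diff s\eta=s\diff_{\mathcal F}\eta+\alpha\wedge\bigl(\mathcal L_R\eta-\nu\wedge\iota_R\eta\bigr).
\]
This is the identity the paper's proof of the corollary actually uses, and it yields $\delta=\mathcal L_R-j\nu\wedge\iota_R$ with no ambiguity.

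\emph{A smaller inaccuracy.} Your well-definedness remark is off. Under $\eta\mapsto\eta+\alpha\wedge\theta$, both $j\mathcal L_R\eta$ and $j(\nu\wedge\iota_R\eta)$ change by $j\nu\wedge j\theta$, because $\mathcal L_R\alpha=\nu-\nu(R)\,\alpha$. This shift is in general neither $\diff_{\mathcal F}$-exact nor zero; only the difference of the two terms is independent of the representative.
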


\begin{proof}
    For the first item, let us express and arbitrary element in $\alpha \wedge \Omega^\bullet(\mathcal{F})$ as $\alpha \wedge s(\omega)$. Since the sequence \eqref{eq:ses-cochain-foliated} is exact, the operator $\diff_\nabla$ is simply the restriction of $\diff$ to the given subcomplex. We consequently have
    \begin{align*}
        \diff(\alpha \wedge s(\omega)) &= \diff \alpha \wedge s(\omega) - \alpha \wedge \diff s(\omega) \\
        &= \alpha \wedge \nu \wedge s(\omega) - \alpha \wedge \big(s \diff_{\mathcal{F}} \omega  - \alpha \wedge (\nu\, \iota_R \omega - \mathcal{L}_R \omega) \big) \\
        &= \alpha \wedge s(j\nu \wedge \omega - \diff_{\mathcal{F}} \omega).
    \end{align*}

    For the second item, recall that the connecting morphism is defined as $\delta(\omega) = i(\diff \omega')$ for any $\omega'$ satisfying $j(\omega') = \omega$. In our case, we may simply consider $\omega' = s(\omega)$ and obtain therefore
    \begin{equation*}
        \diff s(\omega) = s \diff_{\mathcal{F}} \omega - \alpha \wedge (\nu \wedge \iota_R \omega - \mathcal{L}_R \omega) = \alpha \wedge (\mathcal{L}_R \omega - j\nu \wedge \iota_R \omega). \qedhere
    \end{equation*}
\end{proof}

Consequently, under the choices made so far the long exact sequence in cohomology associated to the sequence \eqref{eq:ses-cochain-foliated} specializes to
\begin{equation}
    \begin{tikzcd}[column sep=small]
    	\cdots & {\mathrm{H}^{\bullet - 1}(\mathcal{F})} & {\mathrm{H}^{\bullet - 1}(\mathcal{F}, \diff_\nabla)} & {\mathrm{H}^\bullet(M)} & {\mathrm{H}^\bullet(\mathcal{F})} & {\mathrm{H}^\bullet(\mathcal{F}, \diff_\nabla)} & \cdots
    	\arrow[from=1-1, to=1-2]
    	\arrow[from=1-2, to=1-3]
    	\arrow[from=1-3, to=1-4]
    	\arrow["j", from=1-4, to=1-5]
    	\arrow[from=1-5, to=1-6]
    	\arrow[from=1-6, to=1-7]
    \end{tikzcd}
\end{equation}

\begin{remark}
    Observe that, from equation \eqref{eq:normal-var-alpha}, the form $\nu$ is the normal variation of $\alpha$ and therefore modified differential corresponds precisely to the induced differential from the Bott connection.
\end{remark}

\begin{remark}
    The introduction of $\alpha$ as a choice of trivialization of $\mathrm{N}^* \mathcal{F}$ has been used to give a precise description of the cohomology groups appearing in \eqref{eq:ses-cochain-foliated}. However, we should prove that the resulting cohomology groups are independent of this choice.

    If $\alpha$ and $\alpha'$ are two different choices of trivialization, we know $\alpha' = f \alpha$ for some nowhere vanishing, $G$-invariant function $f$. This implies
    \begin{equation}
        \alpha' \wedge \nu' = \diff \alpha' = \diff(f \alpha) = \diff f \wedge \alpha + f \diff \alpha = \frac{\diff f}{f} \wedge \alpha' + \alpha' \wedge \nu = \alpha' \wedge (\nu - \diff \log \lvert f \rvert).
    \end{equation}
    Consequently, $j\nu' = j\nu - \diff_{\mathcal{F}} \log \lvert f \rvert$ and therefore the induced operators in cohomology are isomorphic. The term $\diff \log \lvert f \rvert$ appears explicitly in the computations of Guillemin, Miranda, and Pires to show that the given obstruction classes are independent of the choice of defining function~\cite{guillemin_codimension_2011}.
\end{remark}

\subsection{Equivariant co-orientable, codimension-one foliations} \label{ssec:equiv-coorient-codimone-fol}

Given we are interested in equivariant cosymplectic structures and the equivariant counterpart of the obstruction classes of Guillemin, Miranda, and Pires, we need to define first the equivariant counterpart of the foliations discussed thus far. Let us assume from now and onward that we have a manifold $M$ acted upon by a compact Lie group $G$.

\begin{definition}
    An \emph{equivariant co-orientable, codimension-one foliation} is a distribution $\mathcal{D} \subseteq \mathrm{T}M$ defined by a nowhere vanishing one-form $\underline{\alpha} \in \Omega^1(M)$ satisfying $\diff_G \underline{\alpha} = \underline{\alpha} \wedge \underline{\nu}$.
\end{definition}

Some care is needed when identifying these foliations with their non-equivariant counterparts. Observe that, since $\Omega_G^1(M) = \Omega^1(M)^G$, we have $\underline{\alpha} = \alpha$ for some $G$-invariant de Rham form. For the same reasons, $\underline{\nu} = \nu$. Therefore, $\mathcal{D}$ is a co-orientable, codimension one distribution. The condition $\diff_G \underline{\alpha} = 0$ in this identification reads now
\begin{equation*}
    \alpha \wedge \nu = \diff_G \alpha = \diff \alpha - \sum_{i = 1}^m (\iota_{X_i^\#} \alpha) \otimes \sigma^i.
\end{equation*}
Since both terms have different degree, we arrive to the following conclusions:
\begin{itemize}
    \item $\diff \alpha = \alpha \wedge \nu$, thus $\mathcal{D}$ is an involutive distribution, and
    \item $\iota_{X^\#} \alpha = 0$ for very $X \in \mathfrak{g}$.
\end{itemize}
The latter condition implies that, whenever $G$ is connected, the $G$-action preserves all leaves of $\mathcal{F}$ individually. In general, $G$ only preserves the distribution $\mathcal{D}$.

Under the previous assumptions, the equivariant differential $\diff_G$ factorizes through the projection $j \otimes \operatorname{id}_{\operatorname{Sym}^\bullet(\mathfrak{g}^*)}$ and thus induces an equivariant foliated differential $\diff_{\mathcal{F}, G}$. The chain complex $(\Omega_G^\bullet(\mathcal{F}), \diff_{\mathcal{F}, G})$ is called the complex of \emph{equivariant foliated forms}. In the language of \Cref{ssec:equiv-cohomology}, it is induced from the $G$-differential complex $\Omega^\bullet(\mathcal{F})$ with the restriction of the operators $\iota_X$ and $\mathcal{L}_X$. Similarly, we have an analogous complex $\Omega_G^\bullet(\mathcal{F}; \mathrm{N}^* \mathcal{F})$ with differential $\diff_{\nabla, G}$.

Our main objective is to establish an analogue of the short exact sequence \eqref{eq:ses-cochain-foliated} ---and, similarly, to \eqref{eq:splitting-cochain-foliated}--- in this equivariant setting. Observe that the Cartan complex is obtained by the composition of the functors\footnote{
    In our notation we use the question mark ``?'' as a placeholder in the expression of certain functors.
} $? \otimes_{\mathbb{R}} \operatorname{Sym}^\bullet(\mathrm{g}^*)$ and $?^G$. While the former is exact, since we are taking tensor products over a field, the latter is only left exact in general. To overcome this problem we show that the splittings described in the previous section can be chosen to be $G$-invariant, proving the desired result.

\begin{proposition} \label{prop:equiv-ses-cochain}
    Let us consider an equivariant, co-orientable, codimension-one foliation in $M$. If $R$ is a $G$-invariant vector field $R \in \mathfrak{X}(M)$ such that $\alpha(R) = 1$, the section \eqref{eq:section} descends to a map of complexes
    \begin{equation*}
        s \otimes \operatorname{id}_{\operatorname{Sym}^\bullet(\mathfrak{g}^*)} \colon \Omega_G^\bullet( \mathcal{F} ) \longrightarrow \Omega_G^\bullet(M),
    \end{equation*}
    which induces a section of the map
    \begin{equation*}
        j \otimes \operatorname{id}_{\operatorname{Sym}^\bullet(\mathfrak{g}^*)} \colon \Omega_G^\bullet( M ) \longrightarrow \Omega_G^\bullet( \mathcal{F}).
    \end{equation*}

    Consequently,the following short sequences are exact:
    \begin{equation} \label{eq:ses-cochain-equiv-foliated}
        \begin{tikzcd}
        	0 & {\Omega^\bullet_G(\mathcal{F}; \mathrm{N}^* \mathcal{F})} & {\Omega_G^\bullet(M)} & {\Omega_G^\bullet(\mathcal{F})} & 0
        	\arrow[from=1-1, to=1-2]
        	\arrow[from=1-2, to=1-3]
        	\arrow[from=1-3, to=1-4]
        	\arrow[from=1-4, to=1-5]
        \end{tikzcd}
    \end{equation}
    \begin{equation} \label{eq:ses-cochain-equiv-foliated-trivialized}
        \begin{tikzcd}
        	0 & {\alpha \wedge \Omega^{\bullet - 1}_G(\mathcal{F})} & {\Omega_G^\bullet(M)} & {\Omega_G^\bullet(\mathcal{F})} & 0
        	\arrow[from=1-1, to=1-2]
        	\arrow[from=1-2, to=1-3]
        	\arrow[from=1-3, to=1-4]
        	\arrow[from=1-4, to=1-5]
        \end{tikzcd}
    \end{equation}
\end{proposition}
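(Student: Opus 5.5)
The plan is to reduce everything to $G$-equivariance of the bundle-level section $s$, and then use that $?\otimes\operatorname{Sym}^\bullet(\mathfrak g^*)$ followed by $?^G$ preserves split short exact sequences of $G$-modules, provided the splitting itself is $G$-equivariant.

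\emph{Step 1: equivariance of $s$.} Since $\alpha$ and $R$ are $G$-invariant, for every $g\in G$ and $\eta\in\Omega^\bullet(M)$ we have
\[
g^*\bigl(\eta-\alpha\wedge\iota_R\eta\bigr)=g^*\eta-\alpha\wedge\iota_R g^*\eta ,
\]
because $g^*\iota_R=\iota_{g^{-1}_*R}\,g^*=\iota_R\,g^*$. The projection $j$ is also $G$-equivariant, since $G$ preserves $\mathcal{F}=\ker\alpha$. The well-definedness computation after \eqref{eq:section} then shows that $s\colon\Omega^\bullet(\mathcal{F})\to\Omega^\bullet(M)$ is a $G$-equivariant linear map with $j\circ s=\operatorname{id}$. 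Consequently $s\otimes\operatorname{id}$ is $G$-equivariant on $\Omega^\bullet(\mathcal F)\otimes\operatorname{Sym}^\bullet(\mathfrak g^*)$, where $G$ acts diagonally, so it maps invariants to invariants. It also preserves the Cartan grading $i+2j$. Hence it restricts to a map $\Omega^\bullet_G(\mathcal{F})\to\Omega^\bullet_G(M)$ satisfying $(j\otimes\operatorname{id})\circ(s\otimes\operatorname{id})=\operatorname{id}$. In particular $j\otimes\operatorname{id}$ is surjective on invariants, which is exactly the failure of right exactness of $?^G$ that we had to overcome.

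\emph{Step 2: exactness.} The sequence \eqref{eq:ses-cochain-foliated} is an exact sequence of $G$-modules. Tensoring over $\mathbb{R}$ keeps it exact, and taking $G$-invariants is left exact, so
\[
0\to\Omega^\bullet_G(\mathcal F;\mathrm N^*\mathcal F)\to\Omega^\bullet_G(M)\to\Omega^\bullet_G(\mathcal F)
\]
is exact. Here I identify $(\Omega^\bullet(\mathcal F;\mathrm N^*\mathcal F)\otimes\operatorname{Sym}^\bullet\mathfrak g^*)^G$ with $\ker(j\otimes\operatorname{id})$ restricted to invariants. Step 1 gives surjectivity on the right, so \eqref{eq:ses-cochain-equiv-foliated} is exact.

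For \eqref{eq:ses-cochain-equiv-foliated-trivialized}, I would use the $G$-invariant trivialization $\alpha$. The map $\omega\mapsto\alpha\wedge s(\omega)$ is a $G$-equivariant isomorphism $\Omega^{\bullet-1}(\mathcal F)\cong\ker j$, by the splitting \eqref{eq:splitting-cochain-foliated}. Taking $\otimes\operatorname{Sym}^\bullet$ and invariants identifies $\alpha\wedge\Omega^{\bullet-1}_G(\mathcal F)$ with the kernel term.

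\emph{Step 3: compatibility with differentials.} I must still check that these are sequences of complexes. The map $j\otimes\operatorname{id}$ intertwines $\diff_G$ with $\diff_{\mathcal F,G}$ by the very definition of $\diff_{\mathcal F,G}$. This is where $\iota_{X^\#}\alpha=0$ is used: contractions by fundamental fields preserve $\ker j$. The kernel is therefore a subcomplex with the restricted differential $\diff_{\nabla,G}$.

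This is the main subtlety, and I would flag it. The section $s\otimes\operatorname{id}$ is not a chain map. By the proposition preceding \Cref{cor:connecting-morphism}, its failure is the term $\alpha\wedge(\nu\wedge\eta-\mathcal L_R\eta)$, and there is a contraction part as well. So ``map of complexes'' in the statement should be read as a graded section of the chain map $j\otimes\operatorname{id}$, which is all that exactness requires. The explicit defect $\diff_G(s\otimes\operatorname{id})-(s\otimes\operatorname{id})\diff_{\mathcal F,G}$ is then what computes the equivariant connecting morphism, in analogy with \Cref{cor:connecting-morphism}.

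One point remains: the existence of an invariant $R$, which the statement assumes. Should it be needed, I would average any $R_0$ with $\alpha(R_0)=1$ over $G$ using Haar measure. Since $\alpha$ is invariant, the averaged field still satisfies $\alpha(R)=1$.
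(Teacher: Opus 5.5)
Your proposal is correct and follows the paper's proof. An invariant $R$, obtained by averaging, makes $s$ $G$-equivariant, so $s\otimes\operatorname{id}$ restricts to the Cartan complexes as a section of $j\otimes\operatorname{id}$; this supplies the surjectivity that the merely left-exact functor of invariants would otherwise not give. Your caveat that $s\otimes\operatorname{id}$ is only a graded section and not a chain map is right, since otherwise the connecting morphism would vanish. One small correction: its defect has no extra contraction part. Because $\iota_{X^\#}\alpha=0$, one checks $\iota_{X^\#}s=s\,\iota_{X^\#}$, so $\diff_G(s\otimes\operatorname{id})-(s\otimes\operatorname{id})\diff_{\mathcal F,G}=(\diff s-s\diff_{\mathcal F})\otimes\operatorname{id}$, which is exactly why the formulas of \Cref{cor:connecting-morphism} carry over to the equivariant setting simply tensored with the identity.
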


\begin{proof}
    Observe we may always choose a $G$-invariant vector field $R \in \mathfrak{X}(M)$ satisfying $\alpha(R) = 1$: since $\alpha$ is $G$-invariant and $G$ is compact, we may average the expression $\alpha(R) = 1$ for any vector field $R$. Consequently, the section $s$ in equation \eqref{eq:section} can be chosen to be $G$-invariant, and therefore it descends to a map of complexes
    \begin{equation*}
        s \otimes \operatorname{id}_{\operatorname{Sym}^\bullet(\mathfrak{g}^*)} \colon \Omega_G^\bullet(\mathcal{F}) \longrightarrow \Omega_G^\bullet(M).
    \end{equation*}
    Furthermore, since $js = \operatorname{id}_{\Omega^\bullet(\mathcal{F})}$ we have
    \begin{equation*}
        (j \otimes \operatorname{id}_{\operatorname{Sym}^\bullet(\mathfrak{g}^*)}) \circ (s \otimes \operatorname{id}_{\operatorname{Sym}^\bullet(\mathfrak{g}^*)}) = (js) \otimes \operatorname{id}_{\operatorname{Sym}^\bullet(\mathfrak{g}^*)} = \operatorname{id}_{\Omega^\bullet(\mathcal{F})} \otimes \operatorname{id}_{\operatorname{Sym}^\bullet(\mathfrak{g}^*)} = \operatorname{id}_{\Omega_G^\bullet(\mathcal{F})}
    \end{equation*}
    and hence the short sequence \eqref{eq:ses-cochain-equiv-foliated} is exact since we have found an explicit splitting.

    The proof that all the previous identifications in \Cref{cor:connecting-morphism} apply to this case are analogous, since the only difference is that all maps have to be tensored with the identity operator $\operatorname{id}_{\operatorname{Sym}^\bullet(\mathfrak{g}^*)}$.
\end{proof}

\section{Equivariant obstruction classes for codimension-one symplectic foliations} \label{sec:equivariant-obstruction-classes}

The classical obstruction classes in \cite{guillemin_codimension_2011} provide necessary and sufficient conditions for a co-orientable, codimension-one, symplectic foliation to arise from a cosymplectic structure. In this section we study the equivariant version of this problem. We begin by making precise the notion of equivariant cosymplectic structure.

\begin{definition} \label{def:equivariant-cosymplectic}
    Let $M$ be a smooth manifold with $\dim M = 2n + 1$ endowed with an action of a compact, connected Lie group $G$. An \emph{equivariant cosymplectic structure} is a pair $(\underline{\alpha}, \underline{\beta})$, with $\underline{\alpha} \coloneqq \alpha \in \Omega_G^1(M)$ and $\underline{\beta} \coloneqq \beta - \mu \in \Omega_G^2(M)$ equivariantly closed forms satisfying the condition $\alpha \wedge \beta^n \neq 0$.
\end{definition}

Observe that, since $\diff_G \alpha = 0$, equivariant cosymplectic structures are an instance of equivariant, co-orientable, codimension-one foliations and therefore the results in \Cref{ssec:equiv-coorient-codimone-fol} apply. Moreover, since $\nu = 0$ we have that $\diff_{\nabla, G} = - \diff_{\mathcal{F}, G}$ and then the short exact sequence \eqref{eq:ses-cochain-equiv-foliated} induces the following long exact sequence of equivariant cohomology groups:
\begin{equation}
    \begin{tikzcd}[column sep=small]
    	\cdots & {\mathrm{H}_G^{\bullet - 1}(\mathcal{F})} & {\mathrm{H}_G^{\bullet - 1}(\mathcal{F})} & {\mathrm{H}_G^\bullet(M)} & {\mathrm{H}^\bullet_G(\mathcal{F})} & {\mathrm{H}^\bullet_G(\mathcal{F})} & \cdots
    	\arrow[from=1-1, to=1-2]
    	\arrow[from=1-2, to=1-3]
    	\arrow[from=1-3, to=1-4]
    	\arrow["j", from=1-4, to=1-5]
    	\arrow[from=1-5, to=1-6]
    	\arrow[from=1-6, to=1-7]
    \end{tikzcd}
\end{equation}

More can be said about the structure of equivariant cosymplectic manifolds. Since $\diff_G \underline{\beta} = 0$ we have
\begin{equation} \label{eq:equiv-fol-symplectic-closed}
    0 = \diff_G (\beta - \mu_i \otimes \sigma^i) = \diff \beta - \sum_{i = 1}^m (\diff \mu_i + \iota_{X_i^\#} \beta) \otimes \sigma^i.
\end{equation}
Checking degrees individually we obtain $\diff \beta = 0$, and consequently we have proved

\begin{proposition}
    Let $M$ be a smooth manifold with an equivariant cosymplectic structure $(\alpha, \beta - \mu)$. Then, the triple $(M, \alpha, \beta)$ is a cosymplectic manifold or, in other words, we have $\diff \alpha = 0$, $\diff \beta = 0$, and $\alpha \wedge \beta^n \neq 0$. Moreover, the forms $\alpha$ and $\beta$ are $G$-invariant.
\end{proposition}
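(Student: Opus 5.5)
The plan is to unpack the two equivariant closedness conditions in the Cartan model and separate them by polynomial degree in $\operatorname{Sym}^\bullet(\mathfrak{g}^*)$. The key tool is that the decomposition $\Omega_G^\bullet(M) \subset \bigoplus_{i,j} \Omega^i(M)\otimes \operatorname{Sym}^j(\mathfrak{g}^*)$ is a direct sum. So an equivariant form vanishes if and only if each bihomogeneous component vanishes.

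For $\underline{\alpha}$, note first that $\Omega_G^1(M) = \Omega^1(M)^G$, since $\operatorname{Sym}^j(\mathfrak{g}^*)$ has degree $2j$. Hence $\alpha$ is automatically $G$-invariant. We expand
\[
\diff_G \alpha = \diff\alpha \otimes 1 - \sum_{i=1}^m \iota_{X_i^\#}\alpha \otimes \sigma^i .
\]
The first term lies in $\Omega^2(M)\otimes\operatorname{Sym}^0$ and the second in $\Omega^0(M)\otimes\operatorname{Sym}^1$, so $\diff_G\alpha=0$ forces $\diff\alpha=0$ and $\iota_{X^\#}\alpha=0$ for all $X\in\mathfrak{g}$. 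This is exactly the argument already given in \Cref{ssec:equiv-coorient-codimone-fol} with $\nu=0$.

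For $\underline{\beta} = \beta - \mu$, an element of $\Omega^2_G(M)$ of total degree $2$ decomposes as a piece in $\Omega^2(M)\otimes\operatorname{Sym}^0$ and a piece in $\Omega^0(M)\otimes \operatorname{Sym}^1$. Writing $\mu = \sum_i \mu_i\otimes\sigma^i$, $G$-invariance of $\underline{\beta}$ as an element of the tensor product means the $\operatorname{Sym}^0$-component $\beta$ is $G$-invariant; invariance of a direct sum decomposition holds componentwise because the action preserves the bigrading. Equation \eqref{eq:equiv-fol-symplectic-closed} then splits into its $\Omega^3\otimes\operatorname{Sym}^0$ component $\diff\beta=0$ and its $\Omega^1\otimes\operatorname{Sym}^1$ component, which is the moment map condition $\diff\mu_i = -\iota_{X_i^\#}\beta$. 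The latter is not needed for the statement. The nondegeneracy $\alpha\wedge\beta^n\neq0$ is part of \Cref{def:equivariant-cosymplectic}, so $(M,\alpha,\beta)$ is cosymplectic.

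The only point requiring any care is justifying that $G$-invariance descends to the bihomogeneous components. This follows since $G$ acts on $\Omega^i(M)$ and $\operatorname{Sym}^j(\mathfrak{g}^*)$ separately, so it preserves each summand $\Omega^i\otimes\operatorname{Sym}^j$. Everything else is a direct degree count, so I do not expect a genuine obstacle.
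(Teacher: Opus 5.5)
Your proposal is correct and follows the paper's argument. Like the paper, you split $\diff_G\underline{\alpha}=0$ and $\diff_G\underline{\beta}=0$ (equation \eqref{eq:equiv-fol-symplectic-closed}) by bidegree to get $\diff\alpha=0$ and $\diff\beta=0$, and you take nondegeneracy from the definition. Your remark that $G$ preserves each summand $\Omega^i(M)\otimes\operatorname{Sym}^j(\mathfrak{g}^*)$, so the $\operatorname{Sym}^0$-component $\beta$ is $G$-invariant, fills in a step the paper leaves implicit.
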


\begin{remark}
    The second term in equation \eqref{eq:equiv-fol-symplectic-closed} suggests that the elements $\mu_i$ appearing in the definition of $\underline{\beta}$ can be interpreted as moment maps for the fundamental action of $G$. This is indeed the case, and the relation between equivariant cosymplectic structures, Hamiltonian actions, and cosymplectic reduction are studied in \cite{miranda_hamiltonian_2026}.
\end{remark}

Recall that the identifications in \Cref{sec:equiv-coorient-codim1-foliations} depend on the choice of a $G$-invariant vector field $R \in \mathfrak{X}(M)$ satisfying $\alpha (R) = 1$. In the setting of cosymplectic manifolds there exists a particularly natural choice for $R$: we call the \emph{Reeb field} the unique vector field $R \in \mathfrak{X}(M)$ satisfying the system of equations
\begin{equation}
    \iota_R \alpha = 1, \qquad \iota_R \beta = 0.
\end{equation}
Notice that, for an equivariant cosymplectic structure, the fact that both $\alpha$ and $\beta$ are $G$-invariant together with the uniqueness of $R$ implies that $R$ is $G$-invariant.

The Reeb field has one additional important property: for the Poisson tensor $\Pi$ associated to the symplectic foliation $(\ker \alpha, \beta)$ following \Cref{ssec:poisson-geometry}, the Reeb field $R$ is a Poisson vector field, i.e., $\mathcal{L}_R \Pi = 0$.

\subsection{The equivariant obstruction classes}

After the description of the equivariant cohomology groups in \Cref{prop:equiv-ses-cochain} we are in position to obtain the cohomological invariants of Guillemin, Miranda, and Pires in the equivariant setting.

\begin{theorem} \label{thm:first-obstruction-class}
    Let us consider an equivariant, co-orientable, codimension-one foliation $\mathcal{F}$. Let us fix a defining form $\alpha \in \Omega^1_G(M)$ and denote the corresponding normal variation in \eqref{eq:normal-var-alpha} by $[j \nu] \in \mathrm{H}_G^1(\mathcal{F})$.

    Then, $\mathcal{F}$ admits an equivariantly closed defining one-form if and only if $c^G_{\mathcal{F}} \coloneqq [j \nu] \in \mathrm{H}_G^1(\mathcal{F})$ vanishes.
\end{theorem}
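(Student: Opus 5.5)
We would prove this by transplanting the classical argument of Guillemin, Miranda, and Pires into the Cartan model, using the invariant splitting of \Cref{prop:equiv-ses-cochain}. First we check that $c^G_{\mathcal{F}}$ is well defined. Since $\alpha$ is $G$-invariant with $\iota_{X^\#}\alpha = 0$, and $\diff\alpha = \alpha\wedge\nu$ determines $\nu$ only modulo $\alpha$, we may average $\nu$ over $G$. This makes $\nu$ invariant, so that $j\nu \in \Omega^1(\mathcal{F})^G = \Omega^1_G(\mathcal{F})$.

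We then show that $j\nu$ is $\diff_{\mathcal{F},G}$-closed. Applying $\diff$ to $\diff\alpha = \alpha\wedge\nu$ gives $\alpha\wedge\diff\nu = 0$, hence $\diff_{\mathcal{F}} j\nu = 0$. The remaining component of $\diff_{\mathcal{F},G}(j\nu)$ is $\sum_i \iota_{X_i^\#} j\nu\otimes\sigma^i$. Contracting $\diff\alpha = \alpha\wedge\nu$ with $X^\#$ and using $\iota_{X^\#}\alpha = 0$ and $\mathcal{L}_{X^\#}\alpha = 0$ yields $0 = \iota_{X^\#}\diff\alpha = -\alpha\,\iota_{X^\#}\nu$, so $\iota_{X^\#}\nu = 0$. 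Thus $j\nu$ is an equivariant cocycle. Independence of the choice of invariant defining form follows from the computation in the remark preceding \Cref{ssec:equiv-coorient-codimone-fol}: replacing $\alpha$ by $f\alpha$ with $f$ invariant changes $j\nu$ by $\diff_{\mathcal{F}}\log\lvert f\rvert$. Since $\log\lvert f\rvert$ is $G$-invariant and $\iota_{X^\#}$ kills functions, this difference is exactly $\diff_{\mathcal{F},G}\log\lvert f\rvert$, so the class is unchanged.

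For the forward implication, suppose $\alpha' = f\alpha$ is an equivariantly closed defining form, with $f$ nowhere vanishing. Averaging, $\bar\alpha' = \int_G g^*\alpha'$ is again equivariantly closed, since $\diff_G$ commutes with the $G$-action, and it is a multiple $\bar f\alpha$ with $\bar f = \int_G g^*f$ by invariance of $\alpha$. We then need $\bar f$ to be nowhere vanishing. Because $G$ is connected, $g^*f$ has the same sign as $f$ at each point: $f$ has constant sign on connected components, and a connected group preserves components. Hence $\bar f \neq 0$. Then $\diff(\bar f\alpha) = 0$ gives $\nu' = 0$, so by the remark $j\nu = \diff_{\mathcal{F},G}\log\lvert\bar f\rvert$ and $c^G_{\mathcal{F}} = 0$.

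For the converse, suppose $j\nu = \diff_{\mathcal{F},G}\underline{h}$ with $\underline{h} \in \Omega^0_G(\mathcal{F}) = \mathcal{C}^\infty(M)^G$. Set $\alpha' = e^{-h}\alpha$. Then $\diff\alpha' = e^{-h}\alpha\wedge(\nu - \diff h)$. Since $j(\nu - \diff h) = 0$, the form $\nu - \diff h$ is a multiple of $\alpha$, and therefore $\diff\alpha' = 0$. Moreover $\alpha'$ is invariant and annihilates the fundamental fields, so $\diff_G\alpha' = 0$. The main delicate step is the averaging argument in the forward direction, specifically the non-vanishing of $\bar f$; this is where connectedness of $G$ is used. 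The rest is bookkeeping of degrees in the Cartan complex.
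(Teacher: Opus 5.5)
Your argument has the same structure as the paper's. The paper phrases the key step as
$\diff_{\nabla,G} f = -\diff_{\mathcal{F},G} f + f\, j\nu$ on $\Omega^1_G(\mathcal{F};\mathrm{N}^*\mathcal{F}) \cong \alpha\,\mathcal{C}^\infty(M)^G$. This is exactly your identity $\diff(f\alpha) = \alpha\wedge(f\nu - \diff f)$. Your direct check that $\iota_{X^\#}\nu = 0$ is a cleaner substitute for the paper's appeal to $\diff_G^2 = 0$ when showing $j\nu$ is a cocycle.

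\textbf{Sign error in the converse.} As written, this step fails. You have $\diff(e^{-h}\alpha) = -e^{-h}\diff h\wedge\alpha + e^{-h}\alpha\wedge\nu = e^{-h}\alpha\wedge(\nu + \diff h)$, not $e^{-h}\alpha\wedge(\nu-\diff h)$. Since $j\diff h = \diff_{\mathcal{F}} h = j\nu$, you get $j(\nu + \diff h) = 2\,j\nu$, which need not vanish. So $e^{-h}\alpha$ is generally not closed. Take $\alpha' = e^{h}\alpha$ instead. Then $\diff\alpha' = e^{h}\alpha\wedge(\nu - \diff h)$, which vanishes because $\nu - \diff h$ is a multiple of $\alpha$. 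This choice also matches your forward direction, where $h = \log\lvert\bar f\rvert$ and the closed form is $\bar f\alpha$.

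\textbf{The averaging is unnecessary.} You call the averaging in the forward direction the delicate step, but it is not needed. An equivariantly closed form lives in the Cartan complex, so it is $G$-invariant by definition. Even without that, $\diff\alpha' = 0$ and $\iota_{X^\#}\alpha' = 0$ give $\mathcal{L}_{X^\#}\alpha' = 0$ by Cartan's formula. Hence $\alpha'$ is already invariant under the connected group $G$. Your averaging and sign-preservation argument is correct, just redundant.
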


\begin{proof}
    Let us begin by observing that, from the exactness of the short exact sequence \eqref{eq:ses-cochain-equiv-foliated}, we have the commutative diagram
    \begin{equation*}
        \begin{tikzcd}[column sep=scriptsize, row sep=2.25em]
        	0 & {\Omega_G^2(\mathcal{F}; \mathrm{N}^* \mathcal{F})} & {\Omega_G^2(M)} \\
        	0 & {\Omega_G^1(\mathcal{F}, \mathrm{N}^* \mathcal{F})} & {\Omega_G^1(M)}
        	\arrow[from=1-1, to=1-2]
        	\arrow[from=1-2, to=1-3]
        	\arrow[from=2-1, to=2-2]
        	\arrow["{\diff_{\nabla, G}}", from=2-2, to=1-2]
        	\arrow[from=2-2, to=2-3]
        	\arrow["{\diff_G}"', from=2-3, to=1-3]
        \end{tikzcd}
    \end{equation*}
    Observe, moreover, that since $\Omega^1_G(\mathcal{F}; \mathrm{N}^* \mathcal{F}) \cong \alpha \, \mathcal{C}^\infty(M)^G$, all possible equivariant defining one-forms for the foliation $\mathcal{F}$ naturally reside in $\Omega^1_G(\mathcal{F}; \mathrm{N}^* \mathcal{F})$. Since the inclusion morphisms $\Omega^\bullet_G(\mathcal{F}; \mathrm{N}^* \mathcal{F}) \to \Omega_G^\bullet(M)$ are injective, we have $\diff_G(f \alpha) = 0$ if and only if $\diff_{\nabla, G} f = 0$.

    We are now in position to prove the result. From the previous discussion, there exists an equivariantly closed defining one-form if and only if we may find a nowhere-vanishing function $f \in \mathcal{C}^\infty(M)^G$ such that $0 = \diff_{\nabla, G} f = - \diff_{\mathcal{F}, G} f + f j\nu$. However, this equation is equivalent to $j \nu = \diff_{\mathcal{F}, G} \log \lvert f \rvert$, which concludes the claim.

    To prove the theorem we only need to show that the form $j \nu \in \Omega^1_G(\mathcal{F})$ is $\diff_{\mathcal{F},G}$-closed. However, since $\diff_G \alpha = \alpha \wedge j\nu$ by definition and $\diff_G^2 = 0$, we have
    \begin{equation*}
        0 = \diff_\nabla (j \nu) = - \diff_{\mathcal{F}, G} (j \nu) + j\nu \wedge j\nu = - \diff_{\mathcal{F}, G} (j \nu)
    \end{equation*}
    and hence $j\nu$ is $\diff_{\mathcal{F}, G}$-closed
\end{proof}

\begin{theorem} \label{thm:second-obstruction-class}
    Let $(M, \mathcal{F}, \beta)$ be an equivariant, co-orientable, codimension-one symplectic foliation. Then, the symplectic foliated form $\beta \in \Omega^2_G(M)$ admits an equivariantly closed representative $\beta' \in \Omega_G^2(M)$ if and only if the cohomology class $\sigma^G_{\mathcal{F}} \coloneqq \delta[\beta] \in \mathrm{H}_G^3(\mathcal{F}; \mathrm{N}^* \mathcal{F})$ vanishes.
\end{theorem}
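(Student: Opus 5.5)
The plan is to argue exactly as in the proof of \Cref{thm:first-obstruction-class}, using the long exact sequence induced by the short exact sequence \eqref{eq:ses-cochain-equiv-foliated}. Here $\beta$ is regarded as the foliated class $[\beta] \in \mathrm{H}^2_G(\mathcal{F})$: the leafwise symplectic form is $\diff_{\mathcal{F}}$-closed and $G$-invariant, and in degree two $\Omega^2_G(\mathcal{F}) = \Omega^2(\mathcal{F})^G \oplus (\mathcal{C}^\infty(M)\otimes\mathfrak{g}^*)^G$. I first check that $j\beta$ is $\diff_{\mathcal{F},G}$-closed. The degree-two component of $\diff_{\mathcal{F},G}\beta$ is $\diff_{\mathcal{F}}\beta = 0$, and the other component is $-\iota_{X_i^\#}\beta \otimes \sigma^i$.

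This second component need not vanish, so some care is needed. The correct reading is that $\sigma^G_{\mathcal{F}}$ is defined on an equivariantly closed foliated extension $\beta - \mu$. Such an extension exists, for instance, when the action is leafwise Hamiltonian. Alternatively, the statement implicitly presupposes a choice of foliated equivariant class $[\underline{\beta}]\in\mathrm{H}^2_G(\mathcal{F})$. I will fix such a class and interpret ``representative'' as a form $\beta'\in\Omega^2_G(M)$ with $\diff_G\beta'=0$ and $j\beta' = \underline{\beta} + \diff_{\mathcal{F},G}\eta$.

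With this in place, the argument is the standard exactness argument. Consider the segment
\[
\mathrm{H}^2_G(M) \xrightarrow{\ j\ } \mathrm{H}^2_G(\mathcal{F}) \xrightarrow{\ \delta\ } \mathrm{H}^3_G(\mathcal{F};\mathrm{N}^*\mathcal{F}).
\]
Exactness at $\mathrm{H}^2_G(\mathcal{F})$ says that $[\underline\beta]$ lies in the image of $j$ if and only if $\delta[\underline\beta]=0$. Lying in the image of $j$ means precisely that there is an equivariantly closed $\beta'\in\Omega^2_G(M)$ with $[j\beta'] = [\underline\beta]$, that is, an equivariantly closed representative.

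To make $\delta$ concrete, I will use the $G$-invariant splitting $s\otimes\operatorname{id}$ of \Cref{prop:equiv-ses-cochain} and the formula of \Cref{cor:connecting-morphism}. This gives $\delta[\underline\beta] = [\alpha\wedge(\mathcal{L}_R \underline\beta - j\nu\wedge\iota_R\underline\beta)]$, whose non-equivariant part recovers the class of Guillemin, Miranda, and Pires. I will also spell out the converse direction at the cochain level. Suppose $\delta[\underline\beta]=0$, so that $\diff_G s(\underline\beta) = \diff_{\nabla,G}(\alpha\wedge\gamma)$ for some $\gamma$. Then $\beta' = s(\underline\beta) - \alpha\wedge\gamma$ is $\diff_G$-closed with $j\beta' = \underline\beta$. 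The cancellation $\diff_G(\alpha\wedge\gamma)=\alpha\wedge\diff_{\nabla,G}\gamma$ holds because the inclusion of $\Omega^\bullet_G(\mathcal{F};\mathrm{N}^*\mathcal{F})$ is a chain map.

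Finally, I will check that the vanishing of $\sigma^G_{\mathcal{F}}$ does not depend on the choices made. Independence of $R$ and of the representative of $[\underline\beta]$ is automatic, because $\delta$ is the canonical connecting morphism. Changing $\alpha$ to $f\alpha$ changes $\nu$ by $\diff\log|f|$, as in the earlier remark, and this induces an isomorphism of the twisted cohomologies carrying one class to the other.

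The main obstacle is the first step: making sense of $\beta$ as a $\diff_{\mathcal{F},G}$-cocycle. The cleanest approach is to state explicitly that the class $[\underline\beta]=[\beta-\mu]\in \mathrm{H}^2_G(\mathcal{F})$ is part of the data. Everything after that is formal homological algebra, which the invariant splitting of \Cref{prop:equiv-ses-cochain} makes available.
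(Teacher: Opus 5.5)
Your argument is, at its core, the paper's: exactness at $\mathrm{H}^2_G(\mathcal{F})$ of the long exact sequence of the $G$-invariantly split sequence $0\to\Omega^\bullet_G(\mathcal{F};\mathrm{N}^*\mathcal{F})\to\Omega^\bullet_G(M)\to\Omega^\bullet_G(\mathcal{F})\to0$. Where you depart from the paper, you are right, and your departure fills a gap in its proof. The paper simply takes $\beta$ itself to be $\diff_{\mathcal{F},G}$-closed. But the $\sigma^i$-component of $\diff_{\mathcal{F},G}\beta$ is $-\iota_{X_i^\#}\beta$. The $X_i^\#$ are tangent to the leaves ($\iota_{X^\#}\alpha=0$) and $\beta$ is leafwise nondegenerate, so this component vanishes only when the infinitesimal action is trivial. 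Fixing a cocycle $\underline{\beta}=\beta-\mu$ is therefore needed for $\delta[\,\cdot\,]$ to make sense at all. Your cochain-level converse $\beta'=s(\underline{\beta})-\alpha\wedge\gamma$ also gives $j\beta'=\underline{\beta}$ exactly, which the paper's cohomological phrasing leaves implicit. Hence the $2$-form component $b$ of $\beta'$ satisfies $jb=\beta$, and so $\alpha\wedge b^n\neq0$.

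One caution about your final paragraph: the vanishing of $\delta[\beta-\mu]$ genuinely depends on $\mu$. The form $\beta$ determines $\mu$ only up to adding $G$-equivariant functions constant on the leaves, so $\mu$ cannot join your list of inessential choices. Take $M=\mathbb{S}^1\times\mathbb{S}^2$, $\alpha=\diff t$, $\beta$ the area form, and $G=\mathbb{S}^1$ rotating $\mathbb{S}^2$ with moment map $\mu_0$ (identify $\mathfrak{g}^*\cong\mathbb{R}$).
\begin{itemize}
\item The cocycle $\beta-\mu_0$ is $j$ of a closed equivariant form on $M$.
\item For non-constant $c$, the cocycle $\beta-\mu_0-c(t)$ is not in the image of $j$ in cohomology. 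Suppose $b-m$ is equivariantly closed with $j(b-m)=\beta-\mu_0-c(t)+\diff_{\mathcal{F},G}\eta$. Then $m=\mu_0+c(t)+\iota_{X^\#}\eta$ and $\diff m=-\iota_{X^\#}b$. Along a fixed circle $\mathbb{S}^1\times\{\text{pole}\}$ both contractions vanish, so $t\mapsto\mu_0(\text{pole})+c(t)$ is constant, forcing $c'=0$.
\end{itemize}
So your fixed-class statement is the correct one to prove. A version phrased in terms of $\beta$ alone must read: $\beta$ has an equivariantly closed extension if and only if $\delta[\beta-\mu]=0$ for \emph{some} equivariant leafwise moment map $\mu$. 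Necessity holds because the $\mathfrak{g}^*$-component of any such extension supplies such a $\mu$.
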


\begin{proof}
    Since $\beta \in \Omega^2_G(\mathcal{F})$ is $\diff_{\mathcal{F},G}$-closed we obtain an element in the cohomology group $[\beta] \in \mathrm{H}_G^2(\mathcal{F})$. By the long exact sequence in cohomology
    \begin{equation*}
        \begin{tikzcd}[column sep=small]
        	\cdots & {\mathrm{H}^2_G(M)} & {\mathrm{H}^2_G(\mathcal{F})} & {\mathrm{H}^3_G(\mathcal{F}; \mathrm{N}^* \mathcal{F})} & \cdots
        	\arrow[from=1-1, to=1-2]
        	\arrow[from=1-2, to=1-3]
        	\arrow["\delta", from=1-3, to=1-4]
        	\arrow[from=1-4, to=1-5]
        \end{tikzcd}
    \end{equation*}
    we observe $[\beta] = j^*[\beta']$ for some $\beta' \in \mathrm{H}_G^2(M)$ if and only if $[\beta] \in \ker \delta$. Therefore, the element $\delta[\beta] \in \mathrm{H}_G^3(\mathcal{F}; \mathrm{N}^* \mathcal{F})$ is the obstruction class we are looking for.
\end{proof}

In the original computations of Guillemin, Miranda, and Pires, it is assumed that the first obstruction class given by \Cref{thm:first-obstruction-class} vanishes, which is given by $[j\nu] = 0 \in \mathrm{H}^1_G(\mathcal{F})$. Under these assumptions, we have an identification $\mathrm{H}^\bullet_G(\mathcal{F}; \mathrm{N}^* \mathcal{F}) \cong \mathrm{H}_G^{\bullet - 1}(\mathcal{F})$ and our obstruction class is defined in the cohomology group $\mathrm{H}_G^2(\mathcal{F})$. Consequently, our computations reduce to those of Guillemin, Miranda, and Pires, and extend them if the first obstruction class does not vanish. In particular, we obtain the following:

\begin{corollary} \label{cor:second-obstruction-class}
    Let $(M, \mathcal{F}, \beta)$ be an equivariant, co-orientable, codimension-one symplectic foliation and assume the first invariant $c_{\mathcal{F}}^G \in \mathrm{H}^1(\mathcal{F})$ vanishes (cf. \Cref{thm:first-obstruction-class}). Then, the symplectic foliated form $\beta \in \Omega^2_G(M)$ admits an equivariantly closed representative $\beta' \in \Omega_G^2(M)$ if and only if the cohomology class $\sigma^G_{\mathcal{F}} \coloneqq \delta[\beta] \in \mathrm{H}_G^2(\mathcal{F})$ vanishes.
\end{corollary}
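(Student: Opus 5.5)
The plan is to combine \Cref{thm:second-obstruction-class} with an identification of the twisted complex with the untwisted foliated complex, made possible by the vanishing of $c_{\mathcal{F}}^G$.

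\emph{Step 1: replace $\alpha$ by a closed defining form.} Since $c^G_{\mathcal{F}} = [j\nu]$ vanishes, \Cref{thm:first-obstruction-class} gives a nowhere-vanishing $f \in \mathcal{C}^\infty(M)^G$ with $j\nu = \diff_{\mathcal{F},G}\log\lvert f\rvert$. Then $\alpha' \coloneqq f\alpha$ is an equivariantly closed defining form. By the remark on changes of trivialization, its normal variation is $j\nu' = j\nu - \diff_{\mathcal{F}}\log\lvert f\rvert$. Because $\log\lvert f\rvert$ is an invariant function, $\diff_{\mathcal{F},G}$ and $\diff_{\mathcal{F}}$ agree on it, so $j\nu' = 0$. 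Rescaling $R$ to $R/f$ keeps it $G$-invariant and keeps the normalization $\alpha'(R/f) = 1$. Hence all the identifications of \Cref{prop:equiv-ses-cochain} and \Cref{cor:connecting-morphism} hold with $\nu' = 0$.

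\emph{Step 2: identify the twisted complex.} With $\nu' = 0$, the twisted differential becomes $\diff_{\nabla,G} = -\diff_{\mathcal{F},G}$ on $\alpha'\wedge\Omega^{\bullet-1}_G(\mathcal{F})$, as in the cosymplectic discussion before \Cref{thm:first-obstruction-class}. The sign does not change cohomology, so the trivialized sequence \eqref{eq:ses-cochain-equiv-foliated-trivialized} gives
\[
\mathrm{H}^\bullet_G(\mathcal{F};\mathrm{N}^*\mathcal{F}) \cong \mathrm{H}^{\bullet-1}_G(\mathcal{F}),
\qquad \omega \mapsto \alpha'\wedge s(\omega),
\]
and in particular $\mathrm{H}^3_G(\mathcal{F};\mathrm{N}^*\mathcal{F}) \cong \mathrm{H}^2_G(\mathcal{F})$.

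\emph{Step 3: transport the obstruction.} Apply \Cref{thm:second-obstruction-class} and transport $\delta[\beta]$ through this isomorphism. By \Cref{cor:connecting-morphism}, now tensored with $\operatorname{id}_{\operatorname{Sym}^\bullet(\mathfrak{g}^*)}$, the connecting map is $\delta = \mathcal{L}_R - j\nu'\wedge\iota_R = \mathcal{L}_R$. So $\sigma^G_{\mathcal{F}}$ is represented by $\mathcal{L}_R\beta$ in $\mathrm{H}^2_G(\mathcal{F})$, which recovers the Guillemin--Miranda--Pires class. Since the map in Step 2 is an isomorphism, $\sigma^G_{\mathcal{F}}$ vanishes in one group exactly when it vanishes in the other, and the equivalence follows directly from \Cref{thm:second-obstruction-class}.

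\emph{Main obstacle.} The delicate point is well-definedness: the isomorphism in Step 2 depends on the choice of $f$, and hence of $\alpha'$. Two admissible choices differ by an invariant function $g$ with $\diff_{\mathcal{F},G}g = 0$. Such a $g$ is leafwise constant and therefore acts as an automorphism of $\mathrm{H}^\bullet_G(\mathcal{F})$. Consequently the class $\sigma^G_{\mathcal{F}}$ may depend on the choice up to this automorphism, but its vanishing does not, and vanishing is all the statement needs. A second point to check is that $\diff_{\mathcal{F},G}$ on degree-zero invariant functions equals $\diff_{\mathcal{F}}$, since the contraction term $\iota_{X^\#}$ vanishes on functions. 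This is what allows Step 1 to use the non-equivariant remark on changes of trivialization.
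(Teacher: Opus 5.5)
Your proposal is correct and takes the same route as the paper. The paper supports this corollary only through the paragraph preceding it: vanishing of $c^G_{\mathcal{F}}$ gives the identification $\mathrm{H}^\bullet_G(\mathcal{F};\mathrm{N}^*\mathcal{F})\cong\mathrm{H}^{\bullet-1}_G(\mathcal{F})$, so \Cref{thm:second-obstruction-class} can be read in $\mathrm{H}^2_G(\mathcal{F})$. Your rescaling to the closed defining form $f\alpha$ (with $R/f$, which leaves the section $s$ unchanged) and your remark that vanishing does not depend on the choice of $f$ simply make that identification explicit.
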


\subsection{Equivariant unimodularity for cosymplectic manifolds}

Let us now consider a co-orientable, codimension-one symplectic foliation $(M, \mathcal{F}, \beta)$ and let $\Pi_{\mathcal{F}}$ and $\Pi$ be the Poisson structures defined following \Cref{ssec:poisson-geometry}. In \cite[Thm.\,10]{guillemin_codimension_2011}, Guillemin, Miranda, and Pires provide the following criterion for unimodularity: the Poisson manifold $(M, \Pi)$ is unimodular if and only if the first obstruction class $c_{\mathcal{F}}^G \in \mathrm{H}_{G}^1(\mathcal{F})$ vanishes.

We will prove an analogous result in the setting of equivariant unimodularity. In order to build the equivariant Poisson complex, we have to find a cotangent lift of the group action following \Cref{prop:equiv-poisson-dgga}.

\begin{proposition}
    Let $(M, \mathcal{F}, \beta)$ and let $\Pi_{\mathcal{F}}$ and $\Pi$ be the Poisson structures defined following \Cref{ssec:poisson-geometry}. Let $\rho$ be a $G$-action tangent to $\mathcal{F}$ for a compact, connected Lie group $G$ and let $\tau \colon \mathfrak{g} \to \mathfrak{X}(\mathcal{F})$ be the infinitesimal action.
    
    Then, the maps $\lambda = - \beta^\flat \tau \colon \mathfrak{g} \to \Omega^1(\mathcal{F})$ and $s \lambda \colon \mathfrak{g} \to \Omega^1(M)$, with $s$ defined in \eqref{eq:section}, are cotangent lifts, respectively, for $\tau$ and $i \tau$. In other words, the following equations hold:
    \begin{subequations}
        \begin{align}
            \tau &= \Pi_{\mathcal{F}}^\sharp \lambda, \label{eq:cot-lift-fol} \\
            i \tau &= \Pi^\sharp s \lambda. \label{eq:cot-lift}
        \end{align}
    \end{subequations}
\end{proposition}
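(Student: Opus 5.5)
The plan is to verify the two identities pointwise, using only the definitions of the anchor maps from \Cref{ssec:poisson-geometry} and the section $s$ from \eqref{eq:section}. For \eqref{eq:cot-lift-fol} I would unwind the definition $\Pi_{\mathcal{F}} = \omega^\sharp(\omega)$ with $\omega = j\beta$. On a leaf, $\beta$ is symplectic, so $\beta^\flat \colon \mathrm{T}\mathcal{F} \to \mathrm{T}^*\mathcal{F}$ is an isomorphism with inverse $\beta^\sharp$. The first step is to fix the sign convention: I would check that $\Pi_{\mathcal{F}}^\sharp = -\beta^\sharp$. This is the convention forced by $\iota_{X_f}\beta = -\diff_{\mathcal{F}} f$ together with $X_f = \Pi^\sharp(\diff f)$, and it is consistent with the diagram \eqref{eq:anchor-maps-reg-poisson}. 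Then, for $X \in \mathfrak{g}$,
\[
\Pi_{\mathcal{F}}^\sharp \lambda(X) = -\beta^\sharp\bigl(-\beta^\flat \tau(X)\bigr) = \tau(X),
\]
which gives \eqref{eq:cot-lift-fol}. Here I use that $\tau(X)$ is tangent to $\mathcal{F}$ by hypothesis, so that $\beta^\flat\tau(X)$ is a genuine foliated one-form.

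For \eqref{eq:cot-lift}, I would use the commutative diagram \eqref{eq:anchor-maps-reg-poisson}, i.e.\ $\Pi^\sharp = i \circ \Pi_{\mathcal{F}}^\sharp \circ i^*$, where $i^* = j$ is the restriction $\mathrm{T}^*M \to \mathrm{T}^*\mathcal{F}$. Since $s$ is a section of $j$ (that is, $js = \operatorname{id}$, as already used in the proof of \Cref{prop:equiv-ses-cochain}), we get
\[
\Pi^\sharp s\lambda = i\,\Pi_{\mathcal{F}}^\sharp\, j s \lambda = i\,\Pi_{\mathcal{F}}^\sharp \lambda = i\tau,
\]
by the first part. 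Note that $\Pi^\sharp$ kills $\alpha$, so the correction term $-\alpha\,\iota_R\eta$ in $s$ is irrelevant. Hence any lift of $\lambda$ would work, and $s$ merely provides a canonical one, which is $G$-equivariant when $R$ is chosen $G$-invariant.

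The only delicate point is the sign bookkeeping between $\omega^\sharp$, $\Pi_{\mathcal{F}}^\sharp$ and the Hamiltonian convention. The paper writes both $\Pi^\sharp = i\circ(-\omega^\sharp)\circ i^*$ and $\Pi = i\omega^\sharp(\omega)$, so I would settle the sign by testing on $\diff f$: by definition $\Pi^\sharp\diff f = X_f = -\omega^\sharp \diff_{\mathcal{F}} f$. This confirms $\Pi_{\mathcal{F}}^\sharp = -\omega^\sharp$, which is exactly the sign compensated by the minus sign in $\lambda = -\beta^\flat\tau$.

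For completeness, I would also remark why $\lambda$ is equivariant, which \Cref{prop:equiv-poisson-dgga} needs. Since $\beta$ is $G$-invariant and $\tau$ is $\operatorname{Ad}$-equivariant, $\lambda$ is $\operatorname{Ad}$-equivariant. The same holds for $s\lambda$ when $R$ is $G$-invariant.
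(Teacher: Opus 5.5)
Your proposal is correct and follows the paper's proof essentially verbatim. The first identity comes from $\Pi^\sharp_{\mathcal{F}} = -\beta^\sharp$ and the non-degeneracy of $\beta$ on the leaves. The second comes from the factorization $\Pi^\sharp = i\circ \Pi^\sharp_{\mathcal{F}} \circ i^*$ in \eqref{eq:anchor-maps-reg-poisson} together with $i^* s = \operatorname{id}$. Your extra remarks are correct but not needed for the statement: the sign check on $\diff f$, the observation that $\Pi^\sharp$ annihilates $\alpha$ (so any lift of $\lambda$ would do), and the equivariance of $\lambda$.
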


\begin{proof}
    For the proof of \eqref{eq:cot-lift-fol} recall that, from equation \eqref{eq:anchor-maps-reg-poisson} and the definition of cotangent lift, $\lambda$ must satisfy the relation $\tau = - \beta^\sharp \lambda$. Now, by non-degeneracy of $\beta$, this equation holds if and only if $\lambda = - \beta^\flat \tau$. This proves the first part.

    For the second part, observe that the section $s$ satisfies the relation $i^* s = \operatorname{id}$ by construction. Once again, direct computation with equation \eqref{eq:anchor-maps-reg-poisson} implies 
    \begin{equation*}
        \Pi^\sharp s \lambda = i \Pi^\sharp_{\mathcal{F}} i^* s \lambda = i \Pi^\sharp_{\mathcal{F}} \lambda = i \tau. \qedhere
    \end{equation*}
\end{proof}

We establish an analogous criterion for equivariant unimodularity. We assume henceforth that $\rho$ is a $G$-action which is tangent $\mathcal{F}$ and denote by $\tau \colon \mathfrak{g} \to \mathfrak{X}(\mathcal{F})$ the fundamental action. We begin by showing that the modular vector field $X^\Theta$ defines a representative in the equivariant Poisson cohomology group $\mathrm{H}_{\Pi, G}^1(M)$. 

\begin{proposition}
    In the previous assumptions, if $\Theta$ is $G$-invariant then the modular vector field $X^\Theta$ is also $G$-invariant. Moreover, it is $\diff_{\Pi, G}$-closed, and thus determines a cohomology class $[X^\Theta] \in \mathrm{H}_{\Pi, G}^1(M)$.
\end{proposition}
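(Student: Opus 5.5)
Two things have to be checked: that $X^\Theta$ is $G$-invariant, and that it is $\diff_{\Pi,G}$-closed. Since $X^\Theta$ has degree one and no polynomial part, it is a degree-one element of the Cartan complex $(\mathfrak{X}^\bullet(M)\otimes\operatorname{Sym}^\bullet(\mathfrak{g}^*))^G$. Closedness splits by polynomial degree into two conditions: $\diff_\Pi X^\Theta = 0$, and $\iota_{s\lambda(X)} X^\Theta = 0$ for every $X \in \mathfrak{g}$. Here I take the pre-momentum map for $\Pi$ to be $s\lambda$ with $\lambda = -\beta^\flat\tau$, as in the preceding proposition.

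\textbf{Invariance.} Fix $g \in G$. The action preserves $\Pi$ and, by hypothesis, $g^*\Theta = \Theta$. For any $f$, $g$ maps $X_{f\circ g}$ to $X_f$, so naturality of the Lie derivative gives
\[
(\mathcal{L}_{g^*X^\Theta} f)\,\Theta = g^*\bigl((\mathcal{L}_{X^\Theta}(f\circ g^{-1}))\,\Theta\bigr) = g^*\bigl(\mathcal{L}_{X_{f\circ g^{-1}}}\Theta\bigr) = \mathcal{L}_{X_f}\Theta .
\]
By uniqueness of the vector field in the defining relation of $X^\Theta$, this yields $g^*X^\Theta = X^\Theta$. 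Equivalently, $\operatorname{div}_\Theta$ commutes with $g$ because $\Theta$ is invariant.

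\textbf{The $\diff_\Pi$ part.} $X^\Theta$ is a Poisson vector field, as recalled in \Cref{ssec:poisson-geometry}. Hence $\diff_\Pi X^\Theta = -\mathcal{L}_{X^\Theta}\Pi = 0$.

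\textbf{The contraction term.} This is the main step. I must show $\langle s\lambda(X), X^\Theta\rangle = 0$ for all $X \in \mathfrak{g}$.

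1. Since $X^\Theta$ is tangent to the leaves (the structure is regular), $X^\Theta = i(Y)$ with $Y \in \mathfrak{X}(\mathcal{F})$. Using $i^*s = \operatorname{id}$, the pairing becomes
\[
\langle s\lambda(X), X^\Theta\rangle = \lambda(X)(Y) = -\beta(\tau(X), Y).
\]

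2. Take $X^\Theta$ in the standard form for regular corank-one structures (as in \cite{guillemin_codimension_2011}). With $\Theta = \alpha\wedge\beta^n$, one has $X^\Theta = \beta^\sharp(j\nu)$ up to sign, i.e.\ $\iota_Y\beta = \pm j\nu$. For a general invariant $\Theta = f\,\alpha\wedge\beta^n$ with $f$ invariant, formula \eqref{eq:mod-vf-coboundary} adds a Hamiltonian term $X_{\log\lvert f\rvert}$. Its contribution to the pairing is $\pm\mathcal{L}_{\tau(X)}\log\lvert f\rvert = 0$, by invariance of $f$.

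3. It remains to use the main term: $\beta(\tau(X), Y) = \pm\,\iota_{\tau(X)} j\nu$. This vanishes by the equivariant foliation condition. From $\diff_G\alpha = \alpha\wedge\nu$, the degree-zero polynomial part gives $\iota_{X^\#}\nu = 0$ after restricting to the leaves. Alternatively, in the cosymplectic case $\nu = 0$ outright.

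If this pointwise vanishing fails for a general invariant $\Theta$, I would fall back on a different argument for the contraction term. Namely, I would show that $\iota_{s\lambda(X)}X^\Theta$ equals $\operatorname{div}_\Theta$ of the fundamental vector field, which is zero for an invariant volume form. I would compute this directly via $\mathcal{L}_{X_h}\Theta = (\mathcal{L}_{X^\Theta}h)\,\Theta$ applied with local Hamiltonians $h$ satisfying $\diff_{\mathcal{F}}h = \lambda(X)$.
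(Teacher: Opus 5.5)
Your proposal is correct, but its main line handles the contraction term differently from the paper.

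\textbf{The paper's route.} The paper uses a leafwise moment map $\mu^X$ with $\iota_{X^\#}\beta=-\diff_{\mathcal F}\mu^X$. This gives $\beta(\tau(X),X^\Theta)=-\mathcal{L}_{X^\Theta}\mu^X=-\operatorname{div}_\Theta X_{\mu^X}=-\operatorname{div}_\Theta X^\#$, which vanishes because $\Theta$ is invariant. That treats every invariant $\Theta$ at once. However, it needs a global $\mu^X$, i.e.\ a Hamiltonian action, and the stated hypotheses do not include this.

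\textbf{Your route.} You proceed in three steps:
\begin{enumerate}
\item You start from the Guillemin--Miranda--Pires formula for $X^{\Theta_0}$, with $\Theta_0=\alpha\wedge\beta^n$.
\item You reduce a general invariant $\Theta=f\Theta_0$ to $\Theta_0$ via \eqref{eq:mod-vf-coboundary}. The Hamiltonian correction pairs with $s\lambda(X)$ to give $-\mathcal{L}_{\tau(X)}\log\lvert f\rvert=0$.
\item You kill the main term using $\iota_{X^\#}\nu=0$.
\end{enumerate}

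\textbf{What your route buys.} It needs no moment map, only invariance of $\alpha$ and of the leafwise symplectic form. Say explicitly that these two facts are also what make $\Theta_0$, and hence $f$, invariant. Your route also shows that closedness of $X^{\Theta_0}$ in the equivariant Poisson complex is exactly the Cartan cocycle condition on $j\nu$. This ties the proposition directly to $c^G_{\mathcal F}$ and to \Cref{thm:eq-unimod}. Your fallback, using local Hamiltonians $h$ with $\diff_{\mathcal F}h=\lambda(X)$ on foliated charts, is the paper's argument localized. It likewise removes the Hamiltonian assumption.

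\textbf{Correction in step 3.} The polynomial-degree-zero part of $\diff_G\alpha=\alpha\wedge\nu$ is just $\diff\alpha=\alpha\wedge\nu$. By itself it does not give $\iota_{X^\#}\nu=0$. Contract it with $X^\#$ and use both $\iota_{X^\#}\alpha=0$ (the degree-one part) and $\mathcal{L}_{X^\#}\alpha=0$ (invariance). This gives $0=\mathcal{L}_{X^\#}\alpha=\iota_{X^\#}\diff\alpha=-(\iota_{X^\#}\nu)\,\alpha$, hence $\iota_{X^\#}\nu=0$. Equivalently, this is the polynomial-degree-one component of $\diff_{\mathcal F,G}(j\nu)=0$ from \Cref{thm:first-obstruction-class}.
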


\begin{proof}
    First observe that, since $\Theta$ and $\Pi$ are $G$-invariant, the corresponding modular vector field $X^\Theta$ is also $G$-invariant. Therefore, it defines an element of the Cartan complex $\mathfrak{X}_G^1(M) = \mathfrak{X}^1(M)^G$.

    Let us now check that the modular vector field is $\diff_{\Pi, G}$-closed. Direct computation shows
    \begin{equation*}
        \diff_{\Pi, G} X^\Theta = \diff_\Pi X^\Theta - \sum_{i = 1}^m s \lambda(X_i) (X^\Theta) \otimes \sigma^i = - \sum_{i = 1}^m s \lambda(X_i) (X^\Theta) \otimes \sigma^i,
    \end{equation*}
    where we have used the fact that $\diff_\Pi X^\Theta = 0$ from the non-equivariant case. Let us prove now that $s \lambda(X) (X^\Theta)$ vanishes for any $X \in \mathfrak{g}$. Recall from Weinstein~\cite{weinstein_modular_1997} that, for a regular Poisson structure, the modular vector field is always tangent to the leaves of the symplectic foliation. In our case, since $\mathcal{F}_\Pi = \ker \alpha$, we have $s^*(X^\Theta) = X^\Theta - \alpha(X^\Theta) R = X^\Theta$ and therefore
    \begin{equation*}
        s \lambda(X) (X^\Theta) = \lambda(X)(X^\Theta) = \beta^\flat(\tau(X))(X^\Theta) = \beta(\tau(X), X^\Theta).
    \end{equation*}
    But remember now that, from the definition of Hamiltonian group action and that of the modular vector field, we have
    \begin{align*}
        \beta(\tau(X), X^\Theta) = (\iota_{X^\#} \beta) (X^\Theta) = - \langle \diff_{\mathcal{F}} \mu^X, X^\Theta \rangle &= - \langle \diff \mu^X, X^\Theta \rangle \\
        &= - \mathcal{L}_{X^\Theta} \mu^X = - \mathcal{L}_{X_{\mu^X}} \Theta / \Theta = - \mathcal{L}_{X^\#} \Theta / \Theta.
    \end{align*}
    Finally, because $\Theta$ is a $G$-invariant volume form, $\mathcal{L}_{X^\#} \Theta = 0$ and the claim is proved.
\end{proof}

The previous result shows that the modular vector field does define an equivariant Poisson cohomology class. With this preliminary in mind, we adapt the proof of Guillemin, Miranda, and Pires to the equivariant setting.

\begin{theorem}[Equivariant unimodularity]\label{thm:eq-unimod}
    In the previous assumptions, the Poisson manifold $(M,\Pi)$ is $G$-unimodular if and only if $c_{\mathcal{F}}^G \in \mathrm{H}_G^1(\mathcal{F})$ vanishes.
\end{theorem}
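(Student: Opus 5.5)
We will mirror the non-equivariant argument of Guillemin, Miranda, and Pires~\cite[Thm.\,10]{guillemin_codimension_2011}. Throughout, \emph{$G$-unimodular} means that the class $[X^\Theta] \in \mathrm{H}_{\Pi,G}^1(M)$ from the preceding proposition vanishes for one (equivalently, every) $G$-invariant volume form $\Theta$. Equivalently, there is a $G$-invariant volume form whose modular vector field vanishes.

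Since $\mathfrak{X}^0_G(M) = \mathcal{C}^\infty(M)^G$ and $\diff_{\Pi,G} f = \diff_\Pi f = -X_f$ on invariant functions (the contraction term vanishes in degree zero), the class vanishes exactly when $X^\Theta = X_h$ for some $G$-invariant $h$. By \eqref{eq:mod-vf-coboundary}, this holds exactly when the invariant volume form $\Xi = e^{h}\Theta$ has $X^\Xi = 0$. So the statement reduces to the following claim: there exists a $G$-invariant volume form with vanishing modular field if and only if $[j\nu] = 0$ in $\mathrm{H}^1_G(\mathcal{F})$. Note that $\mathrm{H}^1_G(\mathcal{F})$ is computed on $\Omega^1(\mathcal{F})^G$ modulo $\diff_{\mathcal{F}}\mathcal{C}^\infty(M)^G$. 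The degree-two part of the Cartan complex contributes only through $\iota_X(j\nu)\otimes\sigma$, which vanishes because $\nu$ may be taken $G$-invariant and $j\nu$ is closed by \Cref{thm:first-obstruction-class}.

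The key computation is the modular vector field of the standard invariant volume form $\Theta = \alpha\wedge\beta^n$. Here $\alpha$ is an invariant defining form, $\beta$ is an invariant extension of the leafwise symplectic form (obtained by averaging), and $R$ is an invariant transverse field as in \Cref{prop:equiv-ses-cochain}. We compute $\mathcal{L}_{X_f}\Theta = \diff\iota_{X_f}(\alpha\wedge\beta^n)$. Since $X_f$ is tangent to $\mathcal{F}$, $\iota_{X_f}\alpha = 0$, and we use $\diff\alpha = \alpha\wedge\nu$ and $\iota_{X_f}\beta = -\diff_{\mathcal{F}}f$ up to $\alpha$-terms. This should give
\begin{equation*}
    \mathcal{L}_{X_f}\Theta = \big(\nu(X_f) + (\text{term from } \diff\beta)\big)\Theta .
\end{equation*}
The $\diff\beta$ contribution enters only through $\alpha\wedge(\text{leafwise part of }\diff\beta)$. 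This part vanishes because $\diff_{\mathcal{F}}\beta = 0$ in the top leafwise degree relevant here, or at worst it contributes an exact/Hamiltonian correction. Hence $X^\Theta = -\Pi^\sharp(s j\nu)$ up to sign, i.e.\ $X^\Theta$ is the leafwise vector field $\beta$-dual to $j\nu$. Pinning down signs and confirming that the $\diff\beta$-term does not spoil this identity is the step I expect to be the main technical obstacle. Its non-equivariant version is the content of \cite{guillemin_codimension_2011}, so I would follow that computation verbatim while keeping track of invariance.

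With this formula, the two directions follow. If $[j\nu] = 0$, write $j\nu = \diff_{\mathcal{F}} g$ with $g$ $G$-invariant. Then $X^\Theta = \pm X_g$, so $e^{\pm g}\Theta$ is an invariant volume form with zero modular field, and $[X^\Theta] = 0$ in $\mathrm{H}^1_{\Pi,G}(M)$. Conversely, if $X^\Theta = X_h$ with $h$ invariant, then by nondegeneracy of $\beta$ on leaves $j\nu = \pm\diff_{\mathcal{F}}h$, so $c^G_{\mathcal{F}} = 0$. Independence of the choices of $\alpha$ and $\Theta$ follows from the earlier remark on $j\nu' = j\nu - \diff_{\mathcal{F}}\log\lvert f\rvert$ together with \eqref{eq:mod-vf-coboundary}, both applied with $G$-invariant $f$.
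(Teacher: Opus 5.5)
Your proposal is correct and follows essentially the same route as the paper. Both reduce $G$-unimodularity to $X^\Theta = X_h$ with $h$ invariant for $\Theta=\alpha\wedge\beta^n$, then use the identity $\iota_{X^\Theta} j\beta = -j\nu$ (which the paper simply imports from \cite[Thm.\,9]{guillemin_codimension_2011}) together with leafwise nondegeneracy of $\beta$ in both directions.

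The step you flagged as the obstacle is harmless. Since $j\beta$ is $\diff_{\mathcal{F}}$-closed, $\diff\beta = \alpha\wedge\gamma$ for some $\gamma$, so $\alpha\wedge\diff\beta = 0$ and the $\diff\beta$-term vanishes identically rather than contributing any correction.
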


\begin{proof}
    Let us first recall that $(M,\Pi)$ is $G$-unimodular if and only if there exists a $G$-invariant volume form $\Theta\in\Omega^{2n+1}(M)$ such that its modular vector field vanishes in $G$-Poisson cohomology. We also recall the result of Theorem 9 in~\cite{guillemin_codimension_2011}: for the volume form $\Theta=\alpha\wedge\beta^n$, the modular vector field $X^\Theta$ is uniquely determined by
    \[
        \iota_{X^\Theta}j\beta=-j\nu,
    \]
    where $\nu\in\Omega^1(M)$ represents the normal variation of $\alpha$ defined in~\eqref{eq:normal-var-alpha}.

    Assume first that $(M,\Pi)$ is $G$-unimodular. Then there exists $f\in \mathcal{C}^\infty(M)^G$ such that $X^\Theta = \diff_{\Pi,G}f = \diff_\Pi f = X_f$. By~\eqref{eq:anchor-maps-reg-poisson},
    \[
        X^\Theta = \diff_\Pi f = i \big( \diff_{\Pi_{\mathcal F}}f \big) = -i \big( \beta^\sharp \diff_{\mathcal F} f \big).
    \]
    Therefore,
    \[
        -j\nu = \iota_{X^\Theta} j\beta = \beta^\flat(X^\Theta) = -\diff_{\mathcal F} f,
    \]
    and hence $j\nu = \diff_{\mathcal F} f$. It follows that $c_{\mathcal F}^G = [j\nu] = 0$.

    Conversely, suppose that $[j\nu] = 0$. Then there exists $f\in \mathcal{C}^\infty(M)^G$ such that $j\nu = \diff_{\mathcal F} f$. Reversing the previous computation, we obtain
    \[
        \iota_{X^\Theta} j\beta = -j\nu
        = -\diff_{\mathcal F} f
        = \iota_{X_f} j\beta.
    \]
    By uniqueness, $X^\Theta = X_f = \diff_{\Pi,G} f$. Thus the modular class vanishes in $G$-Poisson cohomology, and $(M,\Pi)$ is $G$-unimodular.
\end{proof}



\section{Non-equivariant cohomology theories for compact cosymplectic manifolds} \label{sec:cohomology-cosymplectic}

Let $(M, \alpha, \beta)$ be a compact cosymplectic with $\dim M = 2n + 1$. If a leaf $L$ of the foliation $\mathcal{F} = \ker \alpha$ is compact, a result of Guillemin, Miranda, and Pires, building on previous characterizations of Tischler~\cite[Thm.\,1]{tischler_fibering_1970} and Li~\cite[Thm.\,1]{li_topology_2008}, describes $M$ as a symplectic fibration over $\mathbb{S}^1$. Concretely,

\begin{theorem}[{Guillemin--Miranda--Pires \cite[Thm.\,13]{guillemin_codimension_2011}}] \label{thm:mapping-torus}
    Let $(M,\alpha,\beta)$ be a compact cosymplectic manifold with a compact leaf $L \subset M$. Then, all leaves are compact and $M$ is diffeomorphic to the \emph{mapping torus} of a symplectomorphism
    \begin{equation*}
        M \cong \frac{L\times[0,c]}{(x,0)\sim(\varphi(x),c)} \eqqcolon M_\varphi,
    \end{equation*}
    where $\varphi \coloneqq \Phi_R^c \colon (L,\beta_L) \to (L,\beta_L)$ is a symplectomorphism. In particular, $M$ fibres over $\mathbb{S}^1$ with fibre $L$ and monodromy $\varphi$.
\end{theorem}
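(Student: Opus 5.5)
The plan is to follow the flow of the Reeb field $R$, using the fact that it preserves the whole cosymplectic structure and is transverse to the leaves. Since $\iota_R\alpha=1$, $\iota_R\beta=0$ and $\alpha,\beta$ are closed, Cartan's formula gives $\mathcal{L}_R\alpha=\diff\iota_R\alpha+\iota_R\diff\alpha=0$ and $\mathcal{L}_R\beta=0$. Because $M$ is compact, $R$ is complete and its flow $\Phi_R^t$ consists of diffeomorphisms preserving $\alpha$ and $\beta$. Since $\alpha$ is invariant, $\Phi_R^t$ maps leaves of $\mathcal{F}=\ker\alpha$ to leaves. Moreover $\Phi_R^t$ moves points at unit speed in the transverse direction measured by $\alpha$: integrating $\alpha$ along the orbit $t\mapsto\Phi^t_R(x)$ gives exactly $t$.

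First I show that all leaves are compact and that there is a return time. Fix the compact leaf $L$. The map $\Psi\colon L\times\mathbb{R}\to M$, $(x,t)\mapsto\Phi_R^t(x)$, is a local diffeomorphism, since $R$ is transverse to $\mathrm{T}L$. Its image is open, and it is also closed: a limit of points $\Phi^{t_k}_R(x_k)$ lies in a foliated chart where, using compactness of $L$ and the uniform transversality of $R$, one can adjust $t_k$ to reach the limit. By connectedness (assumed, or applied componentwise), $\Psi$ is surjective. Each leaf is therefore of the form $\Phi_R^t(L)$, hence compact.

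Next, consider the set $\{t:\Phi^t_R(L)=L\}$. It is a closed subgroup of $\mathbb{R}$ and is not all of $\mathbb{R}$, since the leaves $\Phi^t_R(L)$ are locally distinct for small $t$ by transversality. It is nonzero: otherwise $\Psi$ would be injective, hence a diffeomorphism $L\times\mathbb{R}\cong M$, contradicting the compactness of $M$. So this subgroup equals $c\mathbb{Z}$ for some $c>0$. Set $\varphi=\Phi^c_R|_L$; it is a diffeomorphism of $L$, and it is symplectic for $\beta_L=\beta|_L$ because $\mathcal{L}_R\beta=0$.

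Finally, $\Psi$ descends to a map $L\times[0,c]/\!\sim\;\to M$, which must be checked against the gluing convention $(x,0)\sim(\varphi(x),c)$; the orientation of this identification may require using $\varphi^{-1}$ or reparametrising, and I would simply match conventions here. This map is a bijective local diffeomorphism, hence a diffeomorphism $M\cong M_\varphi$. Composing with $t\mapsto e^{2\pi i t/c}$ gives the fibration over $\mathbb{S}^1$ with fibre $L$ and monodromy $\varphi$.

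The main obstacle is the closedness of the image of $\Psi$, equivalently the fact that nearby leaves are holonomy-free translates of $L$. Here I would use the invariance of $\alpha$ under the flow: in a flow box around $L$, the function $t$ gives a submersion to an interval whose fibres are exactly the leaves. This is Tischler's argument, and citing Li is an alternative route.
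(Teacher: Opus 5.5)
Your proposal is correct. It follows essentially the route behind the cited result of Guillemin--Miranda--Pires: the paper does not reprove the theorem, but it later recalls that the trivializations of $\pi\colon M\to\mathbb{S}^1$ come from flowing the compact leaf $L$ along the Reeb field $R$. Your caveat about the gluing convention is harmless, since $(x,t)\mapsto(x,c-t)$ identifies $M_{\varphi^{-1}}$ with $M_\varphi$.

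Two small refinements. First, closedness of $\operatorname{im}\Psi$ is cleanest with an $R$-flow box $(z,s)\mapsto\Phi_R^s(z)$ centred at the limit point $y$. Its plaques are $\Phi_R^s$-translates of a plaque of the leaf $L_y$, so $L_y=\Phi_R^{t}(L)$ for some $t$; compactness of $L$ plays no role in this step. Second, connectedness of $M$ must genuinely be assumed rather than handled componentwise, since a component without a compact leaf can carry non-compact (e.g.\ dense) leaves.
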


The number $c > 0$ in the previous result is the minimum time $t > 0$ such that the flow $\Phi_R^t$ of the Reeb field maps $L$ to itself. It is called the \emph{modular period} of $(M, \alpha, \beta)$, and it is an invariant of the cosymplectic structure. We may sometimes denote it by $c \coloneqq \operatorname{mp}(\alpha, \beta)$.

\begin{remark}
    Observe that this theorem only treats the case in which 
    $L$ is compact. However, one can prove that there exists a sequence of cosymplectic manifolds without compact leaves converging to a cosymplectic manifold with compact leaves. This can be achieved using the approximation technique underlying Tischler’s theorem, and it fits within the scope of Sacksteder’s theorem on foliations \cite[Thm.\,2]{sacksteder-propertiesfoliations-1964}.
\end{remark}

An analogous description was used in \cite[Thm.\,4.3]{bazzoni_structure_2014} to establish additional restrictions on the cohomology groups of a manifold admitting a co-Kähler structure, yielding further obstructions to the existence of such a structure.

Let us now take $(M, \alpha, \beta)$ and identify it, following \Cref{thm:mapping-torus}, with a mapping torus $M_\varphi$. Let $(L,\beta_L)$ be a fibre and $\varphi$ the monodromy: this map is defined up to isotopy, and hence acts on cohomology unambiguously. Assume a compact $G$ acts by cosymplectomorphisms and $\varphi$ is $G$-equivariant.

On such a cosymplectic manifold there are several cohomology theories one might consider. In addition to the classical de Rham cohomology, one can consider the foliated cohomology groups for the foliation $\mathcal{F} = \ker \alpha$. Moreover, given a cosymplectic structure induces a regular symplectic foliation, and hence a Poisson structure as in  \Cref{ssec:poisson-geometry}, we can additionally consider the Poisson cohomology groups. All the previous cohomology theories have their natural equivariant counterparts: as such, we can consider the equivariant de Rham, foliated, and Poisson cohomologies.

The goal of this section is to provide a description of all the previous cohomology groups in terms of the fibration $\pi \colon M \to \mathbb{S}^1$. This approach is analogous to that of Bazzoni and Oprea \cite{bazzoni_structure_2014} to find restrictions in the cohomology groups of compact co-Kähler manifolds. The presentation will consequently include a mixture of both new and known results.

\subsection{De Rham cohomology of cosymplectic manifolds}

As noted at the beginning of the section, the approach we follow to the computation of the de Rham cohomology groups is by no means new. In fact, it is simply a restatement of the classical Wang long exact sequence in cohomology.

\begin{theorem} \label{thm:deRham-cosymplectic}
    Let $(M, \alpha, \beta)$ be a cosymplectic manifold described by a fibre bundle $\pi \colon M \to \mathbb{S}^1$ with fibre $L$ with monodromy map $\varphi \colon L \to L$, which is well-defined up to homotopy. Then, we have
    \begin{equation} \label{eq:Wang-deRham}
        \mathrm{H}^\bullet(M) \cong \ker ( \varphi^\bullet - \operatorname{id}_{\mathrm{H}^\bullet(L)} ) \oplus \operatorname{coker} ( \varphi^{\bullet - 1} - \operatorname{id}_{\mathrm{H}^{\bullet - 1}(L)} ).
    \end{equation}
\end{theorem}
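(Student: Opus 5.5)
The plan is to derive the isomorphism from the Wang long exact sequence of the fibration $\pi \colon M \to \mathbb{S}^1$, realised concretely via the mapping torus model of \Cref{thm:mapping-torus}. We cover $\mathbb{S}^1$ by two arcs $U,V$ and set $M_U = \pi^{-1}(U)$, $M_V = \pi^{-1}(V)$. Each of these deformation retracts onto a copy of $L$, while $M_U \cap M_V$ deformation retracts onto two disjoint copies of $L$. The Mayer--Vietoris sequence then reads
\[
    \cdots \to \mathrm{H}^{k}(M) \to \mathrm{H}^k(L)\oplus \mathrm{H}^k(L) \to \mathrm{H}^k(L)\oplus\mathrm{H}^k(L) \to \mathrm{H}^{k+1}(M)\to\cdots
\]
The middle map is, up to sign and the choice of identifications, $(a,b)\mapsto (a-b,\, a-\varphi^k b)$. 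Here $\varphi^k$ denotes the map induced in degree $k$ cohomology by the gluing diffeomorphism. Since $\varphi$ is defined up to isotopy, and isotopic maps induce the same map in de Rham cohomology, $\varphi^k$ is well defined.

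The next step is to reduce this sequence to the Wang sequence
\[
    \cdots \to \mathrm{H}^{k}(M) \xrightarrow{\ \iota^*\ } \mathrm{H}^k(L) \xrightarrow{\ \varphi^k - \operatorname{id}\ } \mathrm{H}^k(L) \to \mathrm{H}^{k+1}(M) \to \cdots.
\]
We do this by eliminating the diagonal copy of $\mathrm{H}^k(L)$, on which the first component of the map is an isomorphism. Alternatively, one can work directly with forms: a form on $M_\varphi$ is a path of forms $\omega_t$ on $L$ together with a $\diff t$-component $\diff t\wedge\eta_t$, subject to the gluing condition $\omega_0=\varphi^*\omega_c$. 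Then $\iota^*$ is restriction to the fibre, and the connecting map sends $[\theta]\in\mathrm{H}^k(L)$ to $[\diff t\wedge \chi(t)\theta]$ for a bump function $\chi$ of integral one. This second description matches the style of \Cref{cor:connecting-morphism}, where $\delta$ involves $\mathcal{L}_R$ and the flow of $R$ integrates to the monodromy, so I would present it that way.

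Exactness yields, for each $k$, a short exact sequence
\[
    0\to \operatorname{coker}(\varphi^{k-1}-\operatorname{id})\to \mathrm{H}^k(M)\to \ker(\varphi^k-\operatorname{id})\to 0.
\]
This is a sequence of real vector spaces, so it splits, and this gives \eqref{eq:Wang-deRham}.

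The only delicate step is identifying the Mayer--Vietoris difference map, including its signs, with $\varphi^k-\operatorname{id}$. For this I would choose the trivialisations of $M_U$ and $M_V$ so that one overlap component is glued by the identity and the other by $\varphi$. Everything else is standard.
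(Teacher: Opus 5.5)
Your proposal is correct and follows essentially the same route as the paper. Both use a Mayer--Vietoris argument for a two-arc cover of $\mathbb{S}^1$, with trivialisations glued by $\operatorname{id}$ and $\varphi$, and then read off $\ker(\varphi^\bullet-\operatorname{id})$ and $\operatorname{coker}(\varphi^{\bullet-1}-\operatorname{id})$ from exactness. Your version is in fact more careful than the paper's, since you treat $\pi^{-1}(U\cap V)$ as two copies of $L$ and reduce $(a,b)\mapsto(a-b,\,a-\varphi^k b)$ to $\varphi^k-\operatorname{id}$, whereas the paper's diagram simply writes a single copy of $\mathrm{H}^k(L)$ for the overlap.
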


\begin{remark}
    We have chosen the decomposition in terms of cokernels because it appears naturally from the long exact sequence in cohomology. However, one may reformulate equation \eqref{eq:Wang-deRham} as
    \begin{equation}
        \mathrm{H}^\bullet(M) \cong \ker (\varphi^\bullet - \operatorname{id}_{\mathrm{H}^{\bullet}(L)}) \oplus \ker (\varphi^{\bullet - 1} - \operatorname{id}_{\mathrm{H}^{\bullet - 1}(L)})
    \end{equation}
    since, for an endomorphism $\psi \colon V \to V$ of a vector space, the natural composition $\ker \psi \to V \to V / \operatorname{im} \psi = \operatorname{coker} \psi$ is an isomorphism.
\end{remark}

Given that we will apply a similar proof strategy throughout the remainder of the section, let us first recall the main steps of the argument. The proof proceeds by a Mayer--Vietoris argument. Choose an open cover $\{U, V\}$ of $\mathbb{S}^1$ by trivializing sets and assume they are contractible. Let $\psi_U$ and $\psi_V$ be the respective trivializations. We may choose the isomorphisms so that $i_{U \cap V}^U = \operatorname{id}_F$ and $i_{U \cap V}^V = \varphi$.

With respect to this open cover, we obtain the classical Mayer--Vietoris sequence:
\begin{equation} \label{eq:isom-ses-forms}
    \begin{tikzcd}[column sep=small]
        0 & {\Omega^\bullet(M) } & {\Omega^\bullet(\pi^{-1} U) \oplus \Omega^\bullet(\pi^{-1}V)} & {\Omega^\bullet( \pi^{-1} (U \cap V) )} & 0
        \arrow[from=1-1, to=1-2]
        \arrow[from=1-2, to=1-3]
        \arrow[from=1-3, to=1-4]
        \arrow[from=1-4, to=1-5]
    \end{tikzcd}
\end{equation}
Moreover, since $\pi^{-1}(U) \cong U \times L$ and $U$ is contractible we have $\mathrm{H}^{\bullet}(\pi^{-1}U) \cong \mathrm{H}^\bullet(L)$. Similarly, $\mathrm{H}^\bullet(\pi^{-1} V) \cong \mathrm{H}^\bullet(L)$. The long exact sequence in cohomology now reads
\begin{equation*}
    \begin{tikzcd}[column sep = large, cells={nodes={text height=2ex,text depth=0.75ex}}]
        \mathrm{H}^{k+1}(M) \arrow{r}{i_U^{k + 1} \oplus i_V^{k + 1}} & \cdots \\
        \mathrm{H}^k (M) \arrow{r}{i_U^{k} \oplus i_V^{k}} & \mathrm{H}^k (L) \oplus \mathrm{H}^k (L) \arrow{r}{(\varphi^*)^k - \operatorname{id}}
        \arrow[draw=none]{u}[name=Y, shape=coordinate]{}
        \arrow[draw=none]{d}[name=Z,shape=coordinate]{}
        & \mathrm{H}^k (L) \arrow[curarrow=Y]{ull}{} \\
        & \cdots \arrow{r}{(\varphi^*)^{k - 1} - \operatorname{id}} & \mathrm{H}^{k - 1} (L)
        \arrow[curarrow=Z]{ull}{}
    \end{tikzcd}
\end{equation*}
To complete the claim, one simply has to compute $\mathrm{H}^k(M)$ using the fact that the sequence is exact.

\subsection{The foliated cohomology groups}

Given that the foliated cohomology groups also satisfy a Mayer--Vietoris principle, we can apply the previous technique to obtain a Wang-like expression. Before proceeding, observe that, since $\mathcal{F}$ is defined in terms of the fiber bundle as $\mathcal{F} = \ker \diff \pi$, in a trivializing chart we have an isomorphism of foliated manifolds
\[
\pi^{-1}(U) \cong U \times L .
\]
Here, the foliation on $U$ is by points, and the foliation on $L$ is the total foliation (i.e., $L$ is the unique leaf). We will use the following Künneth formula for foliated cohomology in the product case, which follows e.g. from Bertelson~\cite[Prop.\,3.1]{bertelson_remarks_2011}.

\begin{lemma} \label{lem:foliated-cohomology-product}
 Let $M = U \times L$ with $L$ compact and with $U \subseteq \mathbb{R}$ and consider the foliation $\mathcal{F} = \operatorname{pr}^*_2 \mathrm{T}L$. In these considerations, the foliated cohomology groups are given by
    \begin{equation*}
        \mathrm{H}^\bullet(\mathcal{F}) \cong \mathcal{C}^\infty(U) \otimes \mathrm{H}^\bullet(L).
    \end{equation*}
\end{lemma}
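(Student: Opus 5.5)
We prove the lemma by computing directly with the product structure, treating the foliated complex as a family of de~Rham complexes of $L$ parametrized smoothly by $t\in U$. Since $\mathcal{F}=\operatorname{pr}_2^*\mathrm{T}L$, a foliated $k$-form on $U\times L$ is a smooth section of $\operatorname{pr}_2^*\bigwedge^k\mathrm{T}^*L$, i.e.\ a smooth family $t\mapsto\omega_t\in\Omega^k(L)$. The foliated differential differentiates only in the $L$-directions, so it acts fibrewise: $(\diff_{\mathcal{F}}\omega)_t=\diff_L\omega_t$. Thus
\[
\Omega^\bullet(\mathcal{F})\cong \mathcal{C}^\infty\big(U,\Omega^\bullet(L)\big),
\]
with differential $\diff_L$ applied pointwise in $t$. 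The natural map is
\[
\mathcal{C}^\infty(U)\otimes\mathrm{H}^\bullet(L)\to\mathrm{H}^\bullet(\mathcal{F}),\qquad f\otimes[\eta]\mapsto[f(t)\,\eta].
\]
Here $\mathrm{H}^\bullet(L)$ is finite-dimensional because $L$ is compact, so the algebraic tensor product is already complete.

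To invert this map we use Hodge theory on $L$. Fix a Riemannian metric on $L$. This gives the Hodge decomposition
\[
\Omega^\bullet(L)=\mathcal{H}^\bullet\oplus\operatorname{im}\diff_L\oplus\operatorname{im}\diff_L^*,
\]
with harmonic projection $P$, Green operator $G_\Delta$, and homotopy $h=\diff_L^*G_\Delta$. These satisfy
\[
\operatorname{id}-P=\diff_L h+h\,\diff_L .
\]
All three operators are continuous on the Fréchet space $\Omega^\bullet(L)$ and independent of $t$. Applying them pointwise to a smooth family $\omega_t$ therefore produces smooth families again, since a $t$-independent continuous linear operator commutes with differentiation in $t$.

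Now let $\omega=\{\omega_t\}$ be a foliated cocycle. Then $\omega-P\omega=\diff_{\mathcal{F}}(h\omega)$, so $\omega$ is cohomologous to $P\omega$. Choosing a basis $\eta_1,\dots,\eta_r$ of $\mathcal{H}^k\cong\mathrm{H}^k(L)$, we can write $P\omega_t=\sum_i f_i(t)\eta_i$ with each $f_i$ smooth. This proves surjectivity. For injectivity, suppose $\sum_i f_i(t)\eta_i=\diff_L\theta_t$. Then for each $t$ the harmonic form $\sum_i f_i(t)\eta_i$ is exact, hence zero, so every $f_i$ vanishes identically.

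The main obstacle is the smooth dependence of the primitive $h\omega_t$ on the parameter $t$. This is exactly what distinguishes the statement from a pointwise-in-$t$ de~Rham theorem, and it is where compactness of $L$ is essential. We handle it by invoking the continuity of the Green operator on $\mathcal{C}^\infty$ of a compact manifold, equivalently its boundedness on Sobolev spaces together with elliptic regularity. Alternatively, one can cite Bertelson~\cite[Prop.\,3.1]{bertelson_remarks_2011}, which treats this product situation directly.
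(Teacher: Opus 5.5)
Your proof is correct, but it takes a different route from the paper. The paper gives no argument of its own and simply invokes Bertelson's Künneth theorem \cite[Prop.\,3.1]{bertelson_remarks_2011}. Its mechanism, sketched later in the proof of \Cref{prop:foliated-equiv-kunneth}, is functional-analytic. First, $\mathcal{C}^\infty(U)\otimes\Omega^\bullet(L)$ is dense in $\Omega^\bullet(\mathcal{F})$, which gives $\Omega^\bullet(\mathcal{F})\cong\mathcal{C}^\infty(U)\widehat{\otimes}\Omega^\bullet(L)$. Next, the Künneth formula for completed tensor products applies because $\mathrm{H}^\bullet(L)$ is finite-dimensional and Hausdorff. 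Finally, $\mathcal{C}^\infty(U)\widehat{\otimes}\mathrm{H}^\bullet(L)=\mathcal{C}^\infty(U)\otimes\mathrm{H}^\bullet(L)$.

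You instead use parametric Hodge theory. The $t$-independent continuous operators $P$ and $h=\diff_L^*G_\Delta$ give a continuous deformation retraction of $\mathcal{C}^\infty(U,\Omega^\bullet(L))$ onto the subcomplex of smooth families of harmonic forms, on which the differential vanishes. This is an explicit and stronger replacement for the Hausdorffness hypothesis: it shows $\operatorname{im}\diff_L=\ker\diff_L\cap\ker P$ is closed and topologically split. Your argument also produces smooth primitives, avoids completed tensor products altogether, and works verbatim for any parameter manifold $U$.

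The one point you should make explicit is the exponential law. You need both directions of the identification between smooth sections of $\operatorname{pr}_2^*\bigwedge^\bullet\mathrm{T}^*L$ over $U\times L$ and smooth curves $U\to\Omega^\bullet(L)$ into the Fréchet space. This is what guarantees that $h\omega$ and $P\omega$ are genuine foliated forms.

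Bertelson's approach buys generality and needs no metric: it applies to products of foliated manifolds where one factor has Hausdorff foliated cohomology. The paper reuses it unchanged for the equivariant local model in \Cref{prop:foliated-equiv-kunneth}. Your Hodge homotopy would need more work there. You would need a $G$-invariant metric and a homological-perturbation step, because $h$ does not intertwine the contraction terms $\iota_{X_i^\#}\otimes\sigma^i$ of the Cartan differential.
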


With this result at hand, we can now establish the following theorem.
\begin{theorem} \label{thm:foliated-wang}
    Under the previous assumptions, the foliated cohomology groups can be described as follows:
    \begin{equation}
        \mathrm{H}^\bullet(\mathcal{F}) = \ker \big( (i_{U \cap V}^U)^* \otimes \operatorname{id}_{\mathrm{H}^\bullet(L)} - (i_{U \cap V}^V)^* \otimes \varphi^\bullet \big). \label{eq:foliated-cohomology-wang}
    \end{equation}
\end{theorem}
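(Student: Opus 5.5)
The plan is to run the same Mayer--Vietoris argument used for \Cref{thm:deRham-cosymplectic}, now for the foliated complex. Take the cover $\{U,V\}$ of $\mathbb{S}^1$ by two contractible arcs, with $U\cap V$ a disjoint union of two arcs. Fix trivializations $\pi^{-1}U\cong U\times L$ and $\pi^{-1}V\cong V\times L$ whose transition map is the identity on one component of $U\cap V$ and $\varphi$ on the other. Since $\mathcal{F}=\ker\diff\pi$, these are isomorphisms of foliated manifolds onto products, with $\mathcal{F}=\operatorname{pr}_2^*\mathrm{T}L$.

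The first step is to check that foliated forms satisfy Mayer--Vietoris. Restriction of leafwise forms is local, and a partition of unity $\{\rho_U,\rho_V\}$ pulled back from $\mathbb{S}^1$ is constant along leaves, so $\diff_{\mathcal{F}}$ commutes with multiplication by $\rho_U,\rho_V$. The usual argument therefore gives a short exact sequence of complexes
\[
0\to\Omega^\bullet(\mathcal{F})\to\Omega^\bullet(\mathcal{F}|_{\pi^{-1}U})\oplus\Omega^\bullet(\mathcal{F}|_{\pi^{-1}V})\to\Omega^\bullet(\mathcal{F}|_{\pi^{-1}(U\cap V)})\to0 .
\]
Next, apply \Cref{lem:foliated-cohomology-product} to each piece. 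This identifies the terms of the resulting long exact sequence with $\mathcal{C}^\infty(U)\otimes\mathrm{H}^\bullet(L)$, $\mathcal{C}^\infty(V)\otimes\mathrm{H}^\bullet(L)$ and $\mathcal{C}^\infty(U\cap V)\otimes\mathrm{H}^\bullet(L)$. Under these identifications, the difference-of-restrictions map becomes $(i^U_{U\cap V})^*\otimes\operatorname{id}-(i^V_{U\cap V})^*\otimes\varphi^\bullet$, with $\varphi^\bullet$ acting only on the component where the gluing is twisted. Naturality of the Künneth isomorphism of \Cref{lem:foliated-cohomology-product} under restriction to open subsets of the base and under fibre diffeomorphisms justifies this identification.

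The key step is to show that the connecting morphism vanishes, i.e.\ that the difference map is surjective in every degree. Exactness then gives $\mathrm{H}^\bullet(\mathcal{F})\cong\ker\big((i^U_{U\cap V})^*\otimes\operatorname{id}-(i^V_{U\cap V})^*\otimes\varphi^\bullet\big)$, which is the statement. Surjectivity follows from the fact that, unlike the de Rham case, the coefficients are all smooth functions on the base rather than constants. Given a class $\sum g_k\otimes a_k$ on $U\cap V$, write $g_k=\rho_V g_k-(-\rho_U g_k)$; here $\rho_V g_k$ extends by zero to a smooth function on $U$, and $\rho_U g_k$ extends to $V$. On the twisted component, apply $(\varphi^\bullet)^{-1}$ to the $V$-part, using that $\varphi$ is a diffeomorphism and so $\varphi^\bullet$ is invertible on $\mathrm{H}^\bullet(L)$. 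This yields a preimage.

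I expect the main obstacle to be this splitting step. Specifically, one must make sure the partition-of-unity cut-off really acts on $\mathcal{C}^\infty(U\cap V)\otimes\mathrm{H}^\bullet(L)$ compatibly with the Künneth identification. Functions on the base are leafwise constant, so multiplication by them commutes with $\diff_{\mathcal{F}}$ and descends to foliated cohomology. One also has to be careful that the tensor product in \Cref{lem:foliated-cohomology-product} should be read as smooth families of classes, since $\mathrm{H}^\bullet(L)$ is finite dimensional for compact $L$, so there is no completion issue.
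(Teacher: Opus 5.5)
Your proposal is correct and follows essentially the same route as the paper. Both arguments use Mayer--Vietoris for the cover $\{\pi^{-1}U,\pi^{-1}V\}$, identify each piece via the foliated K\"unneth formula of \Cref{lem:foliated-cohomology-product}, and kill $\delta$ using a partition of unity pulled back from $\mathbb{S}^1$, which is leafwise constant. The only cosmetic difference is in that last step: the paper phrases it as the chain-level section $\omega\mapsto(\rho_V\omega,-\rho_U\omega)$ commuting with $\diff_{\mathcal{F}}$, whereas you check the equivalent surjectivity of the difference map directly on $\mathcal{C}^\infty(U\cap V)\otimes\mathrm{H}^\bullet(L)$.
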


\begin{proof}
    The proof is similar to that of \Cref{thm:deRham-cosymplectic}. To clarify the role of the trivializations, let us express the classical Mayer–Vietoris sequence in terms of the corresponding isomorphic sequence.
    \begin{equation} \label{eq:ses-foliated-forms-wang}
    \begin{tikzcd}[column sep=scriptsize]
    	0 & {\Omega^\bullet(\mathcal{F})} & {\Omega(\operatorname{pr}_2|_U^* \mathrm{T}L)^\bullet \oplus \Omega^\bullet(\operatorname{pr}_2|_V^* \mathrm{T}L)} & {\Omega^\bullet \big(\operatorname{pr}_2|_{U \cap V}^* \mathrm{T}L\big)} & 0 \\
    	0 & {\Omega^\bullet(\mathcal{F}) } & {\Omega^\bullet(\mathcal{F}_{\pi^{-1} U}) \oplus \Omega^\bullet(\mathcal{F}_{\pi^{-1}V})} & {\Omega^\bullet\big( \mathcal{F}_{\pi^{-1} (U \cap V)} \big)} & 0
    	\arrow[from=1-1, to=1-2]
    	\arrow[from=1-2, to=1-3]
    	\arrow["{\operatorname{id}}"{description}, from=1-2, to=2-2]
    	\arrow["{j^\bullet}", from=1-3, to=1-4]
    	\arrow[from=1-3, to=2-3]
    	\arrow[from=1-4, to=1-5]
    	\arrow[from=1-4, to=2-4]
    	\arrow[from=2-1, to=2-2]
    	\arrow[from=2-2, to=2-3]
    	\arrow[from=2-3, to=2-4]
    	\arrow[from=2-4, to=2-5]
    \end{tikzcd}
    \end{equation}
    where the restriction map in the trivialized sequence is given by $j^\bullet = (i_{U \cap V}^U \times \operatorname{id}_L)^\bullet - (i_{U \cap V}^V \times \varphi)^\bullet$. An application now of \Cref{lem:foliated-cohomology-product} together with this trivialization of the short exact sequence yields the associated long exact sequence
    \begin{equation*}
        \begin{tikzcd}[column sep=small, cells={nodes={text height=2ex,text depth=0.75ex}}]
            \mathrm{H}^{k+1} (\mathcal{F}) \arrow{r}{i^{k + 1}} & \cdots \\
            \mathrm{H}^k (\mathcal{F}) \arrow{r}{i^k} & \mathcal{C}^\infty(U) \otimes \mathrm{H}^k(L) \oplus \mathcal{C}^\infty(V) \otimes \mathrm{H}^k(L) \arrow{r}{j^k}
            \arrow[draw=none]{u}[name=Y, shape=coordinate]{}
            \arrow[draw=none]{d}[name=Z,shape=coordinate]{}
            & \mathcal{C}^\infty (U \cap V) \otimes \mathrm{H}^k(L) \arrow[curvararrow=Y]{ull}{} \\
            & \cdots \arrow{r}{j^{k - 1}} & \mathcal{C}^\infty (U \cap V) \otimes \mathrm{H}^{k - 1}(L)
            \arrow[curvararrow=Z]{ull}{}
        \end{tikzcd}
    \end{equation*}
    
    The proof now follows by exactness of the previous diagram. We can observe, however, that in the decomposition $\mathrm{H}^\bullet(\mathcal{F}) \cong \ker(i^\bullet) \oplus \operatorname{im}(i^\bullet)$, equation \eqref{eq:foliated-cohomology-wang} corresponds to the vanishing of the groups $\operatorname{ker}(i^\bullet)$ or, in other words, to $\delta = 0$.
    
    We can prove this fact by showing that the short exact sequence \eqref{eq:ses-foliated-forms-wang} admits a splitting which intertwines the differentials. Following the classical proof of exactness for the Mayer--Vietoris sequence (cf. \cite[Prop.\,2.3]{bott_differential_1982}), in the open cover $U, V \subset \mathbb{S}^1$ we consider a subordinated partition of unity $\rho_{U}, \rho_{V}$. The pullbacks $\pi^* \rho_{U}, \pi^* \rho_{V} \in \mathcal{C}^\infty(M)$ are a partition of unity subordinated to the open cover $\{\pi^{-1}(U), \pi^{-1}(V)\}$ of $M$. Furthermore, since they are constant on the leaves of $\mathcal{F}$, we have $\diff_{\mathcal{F}} (\pi^* \rho_{U}) = \diff_{\mathcal{F}} (\pi^* \rho_{V}) = 0$. Consequently, the classical splitting of the short exact sequence \eqref{eq:ses-foliated-forms-wang} intertwines the foliated differentials, showing $\delta = 0$ in the induced long exact sequence in cohomology.
\end{proof}

It is not true in general that the dependence on the coefficients decomposes as a tensor product.
\begin{equation*}
    \ker \big( (i_{U \cap V}^U)^* \otimes \operatorname{id}_{\mathrm{H}^\bullet(L)} - (i_{U \cap V}^V)^* \otimes \varphi^\bullet \big) \not\cong \mathcal{C}^\infty(\mathbb{S}^1) \otimes \mathrm{H}^\bullet(L).
\end{equation*}
One may, however, obtain a description of the foliated cohomology groups if one interprets the right hand side in the bundle picture
\begin{equation} \label{eq:bundle-sections-cohomology}
    \mathcal{C}^\infty(\mathbb{S}^1) \otimes \mathrm{H}^\bullet(L) \cong \Gamma(\mathbb{S}^1 \times \mathrm{H}^\bullet(L)).
\end{equation}
This identification holds because the cohomology groups $\mathrm{H}^\bullet(L)$ are finitely generated, given $L$ is compact.

We can now construct $\mathcal{H}^\bullet(L) \to \mathbb{S}^1$ as the vector bundle over $\mathbb{S}^1$ with typical fiber $\mathrm{H}^\bullet(L)$ and monodromy map $\varphi^\bullet$. By the very construction of $\mathcal{H}^\bullet(L)$, we may assume that the vector bundle is trivial over the sets $U$ and $V$, and that, moreover, these isomorphisms satisfy $i_{U \cap V}^U \cong \operatorname{id}_{\mathrm{H}^\bullet(L)}$ and $i_{U \cap V}^V \cong \varphi^\bullet$. The sheaf condition at the level of sections yields the short exact sequence
\[\begin{tikzcd}[column sep=scriptsize]
    0 & {\Gamma(\mathcal{H}^\bullet(L))} & {\Gamma_U(\mathcal{H}^\bullet(L)) \oplus \Gamma_V(\mathcal{H}^\bullet(L))} & {\Gamma_{U \cap V}(\mathcal{H}^\bullet(L))} & 0
    \arrow[from=1-1, to=1-2]
    \arrow[from=1-2, to=1-3]
    \arrow[from=1-3, to=1-4]
    \arrow[from=1-4, to=1-5]
\end{tikzcd}\]
This result, together with the fact that the vector bundle is trivial over $U$ and $V$, and using a similar isomorphism to that in equation \eqref{eq:bundle-sections-cohomology}, yields an isomorphism of short exact sequences
\begin{equation*}
    \begin{tikzcd}[column sep=small]
        0 & {\Gamma\big(\mathcal{H}^\bullet(L)\big)} & {\mathcal{C}^\infty(U) \otimes \mathrm{H}^\bullet(L) \oplus \mathcal{C}^\infty(V) \otimes \mathrm{H}^\bullet(L) } & {\mathcal{C}^\infty(U \cap V) \otimes \mathrm{H}^\bullet(L)} & 0 \\
        0 & {\Gamma\big(\mathcal{H}^\bullet(L)\big)} & {\Gamma_U \big(\mathcal{H}^\bullet(L)\big) \oplus \Gamma_V \big(\mathcal{H}^\bullet(L)\big)} & {\Gamma_{U \cap V}\big(\mathcal{H}^\bullet(L)\big)} & 0
        \arrow[from=1-1, to=1-2]
        \arrow[from=1-2, to=1-3]
        \arrow[from=1-2, to=2-2]
        \arrow[from=1-3, to=1-4]
        \arrow[from=1-3, to=2-3]
        \arrow[from=1-4, to=1-5]
        \arrow[from=1-4, to=2-4]
        \arrow[from=2-1, to=2-2]
        \arrow[from=2-2, to=2-3]
        \arrow[from=2-3, to=2-4]
        \arrow[from=2-4, to=2-5]
    \end{tikzcd}
\end{equation*}
With these computations we have obtained the description of the foliated cohomology groups in terms of the following

\begin{theorem} \label{thm:foliated-cohomology-sections-bundle}
    For a fibration $\pi \colon M \to \mathbb{S}^1$ with compact fibers isomorphic to $L$, the foliated cohomology groups can be computed as
    \begin{equation} \label{eq:foliated-cohomology-sections-bundle}
        \mathrm{H}^\bullet(\mathcal{F}) \cong \Gamma \mathcal{H}^\bullet(L),
    \end{equation}
    where $\mathcal{H}^\bullet(L) \to \mathbb{S}^1$ is the vector bundle with fibre $\mathrm{H}^\bullet(L)$ and monodromy $\varphi^\bullet \colon \mathrm{H}^\bullet(L) \to \mathrm{H}^\bullet(L)$.
\end{theorem}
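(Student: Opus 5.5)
The plan is to combine \Cref{thm:foliated-wang} with the sheaf-theoretic description of sections of $\mathcal{H}^\bullet(L)$ worked out just before the statement, and then pass to cohomology. First, by \Cref{thm:foliated-wang}, the connecting morphism of the foliated Mayer--Vietoris sequence vanishes. This is because the partition of unity $\pi^*\rho_U,\pi^*\rho_V$ is leafwise constant. Consequently $\mathrm{H}^\bullet(\mathcal{F})$ is identified, through the restriction maps $i^\bullet$, with the kernel of
\[
j^\bullet=(i^U_{U\cap V})^*\otimes\operatorname{id}_{\mathrm{H}^\bullet(L)}-(i^V_{U\cap V})^*\otimes\varphi^\bullet
\colon \mathcal{C}^\infty(U)\otimes\mathrm{H}^\bullet(L)\oplus\mathcal{C}^\infty(V)\otimes\mathrm{H}^\bullet(L)\longrightarrow \mathcal{C}^\infty(U\cap V)\otimes\mathrm{H}^\bullet(L).
\]
Here the local identifications come from \Cref{lem:foliated-cohomology-product} applied to $\pi^{-1}(U)\cong U\times L$ and $\pi^{-1}(V)\cong V\times L$. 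The same lemma applies over $U\cap V$, which is a disjoint union of two intervals, so the lemma is applied on each component.

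Second, I would build $\mathcal{H}^\bullet(L)$ by clutching: glue $U\times\mathrm{H}^\bullet(L)$ and $V\times\mathrm{H}^\bullet(L)$ over $U\cap V$, using the identity on one component and $\varphi^\bullet$ on the other. This is consistent with the choice $i^U_{U\cap V}=\operatorname{id}$, $i^V_{U\cap V}=\varphi$ made for the trivializations of $M$. Since $\mathrm{H}^\bullet(L)$ is finite dimensional ($L$ compact), one has $\mathcal{C}^\infty(W)\otimes\mathrm{H}^\bullet(L)\cong\Gamma_W(\mathcal{H}^\bullet(L))$ for $W=U,V,U\cap V$, as in \eqref{eq:bundle-sections-cohomology}. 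Under these isomorphisms $j^\bullet$ becomes the difference of restrictions in the sheaf sequence for $\Gamma(\mathcal{H}^\bullet(L))$. The sheaf axiom then says the kernel of this difference is exactly $\Gamma(\mathcal{H}^\bullet(L))$, which is the isomorphism of short exact sequences displayed before the statement. Combining this with the first step gives \eqref{eq:foliated-cohomology-sections-bundle}.

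The main obstacle is to check that the identifications of \Cref{lem:foliated-cohomology-product} are natural with respect to restriction and to the gluing diffeomorphism. Concretely, the square relating $\mathrm{H}^\bullet(\mathcal{F}_{\pi^{-1}U})\to\mathrm{H}^\bullet(\mathcal{F}_{\pi^{-1}(U\cap V)})$ to $\mathcal{C}^\infty(U)\otimes\mathrm{H}^\bullet(L)\to\mathcal{C}^\infty(U\cap V)\otimes\mathrm{H}^\bullet(L)$ must commute, and pulling back by $\operatorname{id}\times\varphi$ must act as $\operatorname{id}\otimes\varphi^\bullet$. I would verify this from the explicit form of the Künneth map, $f\otimes[\omega]\mapsto[f\,\operatorname{pr}_2^*\omega]$, which is evidently compatible with restriction in the $U$ variable and with pullbacks by fibre-preserving maps of the form $\operatorname{id}\times\psi$.

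A final point is independence of choices. Since $\varphi$ is well-defined only up to isotopy, $\varphi^\bullet$ is well-defined, so the isomorphism class of $\mathcal{H}^\bullet(L)$ does not depend on the choice of $\varphi$. It also does not depend on the cover, because any two clutching descriptions differ by a refinement of the cover.
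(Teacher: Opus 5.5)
Your proposal is correct and follows essentially the same route as the paper. It takes the kernel description of $\mathrm{H}^\bullet(\mathcal{F})$ from \Cref{thm:foliated-wang} (the leafwise-constant partition of unity kills $\delta$), then recognises that kernel as $\Gamma\mathcal{H}^\bullet(L)$ via the clutching/sheaf sequence and the isomorphisms $\mathcal{C}^\infty(W)\otimes\mathrm{H}^\bullet(L)\cong\Gamma_W(\mathcal{H}^\bullet(L))$. Your naturality check of $f\otimes[\omega]\mapsto[f\,\mathrm{pr}_2^*\omega]$, and your use of a transition equal to $\operatorname{id}$ on one component of $U\cap V$ and $\varphi$ on the other, make precise points the paper leaves implicit.
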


\subsection{Poisson cohomology of cosymplectic manifolds}

The Poisson cohomology groups of a cosymplectic manifold were computed by Osorno \cite[Thm.\,3.2.17]{osorno} using a long exact sequence in cohomology for codimension-one regular Poisson manifolds. Let us reproduce the statement.

\begin{theorem}[{Osorno~\cite[Thm.\,3.2.17]{osorno}}] \label{thm:poisson-cohomology-cosymplectic}
    Let $(M^{2n+1},\alpha,\beta)$ be a cosymplectic manifold and $\Pi$ the associated Poisson structure. Then the Poisson cohomology of $(M,\Pi)$ decomposes as
    \begin{equation}
        \mathrm{H}^\bullet_\Pi(M) \cong \mathrm{H}^\bullet( \mathcal{F} ) \oplus \mathrm{H}^{\bullet - 1}( \mathcal{F} ).
    \end{equation}
    where $\mathcal{F}$ is the characteristic foliation determined by $\ker \alpha$.  
\end{theorem}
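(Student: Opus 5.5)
The plan is to use the cosymplectic structure to split the Lichnerowicz complex $(\mathfrak{X}^\bullet(M),\diff_\Pi)$ as a direct sum of two copies of the foliated complex, one shifted by one degree. This splitting should intertwine the differentials up to sign, and the decomposition of $\mathrm{H}^\bullet_\Pi(M)$ then follows directly. The Reeb field $R$ gives a splitting $\mathrm{T}M=\mathrm{T}\mathcal{F}\oplus\langle R\rangle$. Hence every multivector field decomposes uniquely as
\[
    P = A + R\wedge B,\qquad A\in\mathfrak{X}^\bullet(\mathcal{F}),\ B\in\mathfrak{X}^{\bullet-1}(\mathcal{F}).
\]
Here $\mathfrak{X}^\bullet(\mathcal{F})$ denotes multivector fields tangent to the leaves. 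The leafwise symplectic form $\beta$ induces the isomorphism $\beta^\flat\colon \mathfrak{X}^\bullet(\mathcal{F})\to\Omega^\bullet(\mathcal{F})$, extended to multivectors via $\wedge^\bullet\beta^\flat$. The classical fact for symplectic manifolds, applied leafwise, is that $\beta^\flat$ intertwines $[\Pi_{\mathcal{F}},\cdot\,]$ with $\pm\diff_{\mathcal{F}}$. So the subcomplex $(\mathfrak{X}^\bullet(\mathcal{F}),[\Pi,\cdot\,])$ is isomorphic to the foliated de~Rham complex $(\Omega^\bullet(\mathcal{F}),\diff_{\mathcal{F}})$.

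Next, compute $\diff_\Pi$ on each summand. For $A$ tangent to $\mathcal{F}$, the bracket $[\Pi,A]$ stays tangent to $\mathcal{F}$ because $\Pi$ is tangent and $\mathcal{F}$ is involutive, and it equals $\diff_{\Pi_{\mathcal{F}}}A$. For the second summand, the graded Leibniz rule gives
\[
    [\Pi,R\wedge B]=[\Pi,R]\wedge B - R\wedge[\Pi,B].
\]
The Reeb field is Poisson, $\mathcal{L}_R\Pi=0$, as recalled earlier in the paper. Hence $[\Pi,R]=0$ and $\diff_\Pi(R\wedge B)=-R\wedge\diff_\Pi B$. The complex therefore splits as a direct sum of complexes
\[
    \mathfrak{X}^\bullet(M)\cong \mathfrak{X}^\bullet(\mathcal{F})\oplus R\wedge\mathfrak{X}^{\bullet-1}(\mathcal{F}),
\]
with no off-diagonal term. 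Composing with $\beta^\flat$ identifies the first summand with $\Omega^\bullet(\mathcal{F})$ and the second with $\Omega^{\bullet-1}(\mathcal{F})$, the latter with differential $-\diff_{\mathcal{F}}$. Passing to cohomology yields $\mathrm{H}^\bullet(\mathcal{F})\oplus\mathrm{H}^{\bullet-1}(\mathcal{F})$.

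The main obstacle is justifying that the splitting really consists of subcomplexes, which needs two facts. First, $[\Pi,A]$ has no $R$-component, which relies on $\Pi$ being regular with leaves $\ker\alpha$. Second, $[\Pi,R]=0$ exactly, not just up to a coboundary. This holds because $R$ is the Reeb field of a closed pair $(\alpha,\beta)$: $\mathcal{L}_R\alpha=\diff\iota_R\alpha=0$ and $\mathcal{L}_R\beta=\diff\iota_R\beta=0$, so the flow of $R$ preserves the foliation and the leafwise form, hence $\Pi$. I would also check the leafwise intertwining $\wedge^\bullet\beta^\flat\circ\diff_{\Pi_{\mathcal{F}}}=\pm\diff_{\mathcal{F}}\circ\wedge^\bullet\beta^\flat$. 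This is a local computation in leafwise Darboux coordinates with parameters, which exist by the foliated Weinstein/Darboux theorem since $\diff\beta=0$.

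Alternatively, the result can be phrased as the long exact sequence of the pair $\mathfrak{X}^\bullet(\mathcal{F})\subset\mathfrak{X}^\bullet(M)$ with quotient $\mathfrak{X}^{\bullet-1}(\mathcal{F})$. In that form, the vanishing of the connecting map is exactly the statement $[\Pi,R]=0$, i.e.\ the unimodularity input coming from $\nu=0$.
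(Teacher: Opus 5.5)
Your proposal is correct and follows the paper's proof step for step. You use the Reeb splitting $\kappa(X,Y)=i(X)+R\wedge i(Y)$ and the graded Leibniz rule with $[\Pi,R]=0$ to get $\diff_\Pi\kappa(X,Y)=\kappa(\diff_{\Pi_{\mathcal{F}}}X,-\diff_{\Pi_{\mathcal{F}}}Y)$, then the leafwise symplectic identification $\mathrm{H}^\bullet_{\Pi_{\mathcal{F}}}(\mathcal{F})\cong\mathrm{H}^\bullet(\mathcal{F})$. Two corrections to your closing remark only: $[\Pi,R]=0$ is not merely the unimodularity input $\nu=0$, since your own argument needs $\diff\beta=0$ to get $\mathcal{L}_R\beta=0$; and $[\Pi,R]=0$ implies, rather than is equivalent to, the vanishing of the connecting map.
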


\begin{remark}
   A similar result in the setting of cosymplectic manifolds had previously been obtained by Fernández, Ibáñez, and de León~\cite[Cor.\,3.14]{Fernandez_canonical_1998} for the Poisson \emph{homology} groups. They show that there exists an isomorphism
    \begin{equation}
        \mathrm{H}_k(M) \cong \mathrm{H}^{2n - k}(\mathcal{F}) \oplus \mathrm{H}^{2n - k + 1}(\mathcal{F}).
    \end{equation}
\end{remark}

While the approach using a long exact sequence in cohomology is conceptually clear, for the eventual computation of Poisson cohomology in the equivariant setting it will be more convenient to have a working description of this isomorphism. We devote the remainder of the section to this task.

\begin{proof}
    Let us recall the splitting of the short exact sequence \eqref{eq:ses-tangent-foliation} induced by the Reeb field $R$. At the level of multivector fields, this splitting induces the isomorphism
    \begin{equation} \label{eq:splitting-reeb-sections}
        \begin{array}{rccc}
             \kappa \colon & \mathfrak{X}^\bullet(\mathcal{F}) \oplus \mathfrak{X}^{\bullet - 1}(\mathcal{F}) & \longrightarrow & \mathfrak{X}^\bullet(M) \\
             & (X, Y) & \longmapsto & i(X) + R \wedge i(Y).
        \end{array}
    \end{equation}
    Moreover, it is easy to see that the isomorphism induces an isomorphism (up to a sign) at the level of cochain complexes, since the Reeb vector field is Poisson:
    \begin{align*}
        \diff_\Pi \kappa(X, Y) &= \diff_\Pi i(X) - R \wedge \diff_\Pi i(Y) \\
        &= i(\diff_{\Pi_{\mathcal{F}}} X) + R \wedge i(- \diff_{\Pi_{\mathcal{F}}} Y) = \kappa(\diff_{\Pi_{\mathcal{F}}} X, - \diff_{\Pi_{\mathcal{F}}} Y).
    \end{align*}
    To conclude the proof, simply observe that the Poisson tensor $\Pi_{\mathcal{F}}$ is actually symplectic on the bundle $\mathrm{T} \mathcal{F}$ and, consequently, we have an isomorphism $\mathrm{H}_{\Pi_{\mathcal{F}}}^\bullet (\mathcal{F}) \cong \mathrm{H}^\bullet(\mathcal{F})$. Combining the previous observations we conclude the statement of the theorem.
\end{proof}

\begin{remark}[Geometric meaning of the decomposition]\label{rmk:dec}
    The foliation $\mathcal{F}_\Pi$ integrates the kernel of~$\alpha$ and is
    regular of codimension~$1$. Each leaf is a symplectic manifold with symplectic
    form $\omega|_{\mathcal{F}_\Pi}$. From the point of view of Poisson geometry,
    the decomposition above reflects two fundamental facts:
    
    \begin{enumerate}
        \item The complex computing Poisson cohomology splits into
        ``tangential'' and ``transversal'' components relative to the foliation $\mathcal{F}_\Pi$.
        
        \item The Poisson differential contains, besides the usual leafwise de~Rham
        differential, an additional operator detecting the transverse geometry encoded
        by $\alpha$.
    \end{enumerate}
    This results in a shifted direct sum of two copies of the foliated de~Rham cohomology.
\end{remark}

The following corollary was proved in \cite[Cor.\,2]{martinezmiranda}:
\begin{corollary}[Top Poisson cohomology]\label{cor:topPoisson}
    For a cosymplectic manifold $(M^{2n+1},\alpha,\beta)$ we have
    \[
        \mathrm{H}_\Pi^{2n+1}(M) \cong \mathrm{H}^{2n}(\mathcal{F}_\Pi).
    \]
\end{corollary}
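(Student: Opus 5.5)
We specialise \Cref{thm:poisson-cohomology-cosymplectic} to degree $2n+1$:
\[
    \mathrm{H}^{2n+1}_\Pi(M) \cong \mathrm{H}^{2n+1}(\mathcal{F}) \oplus \mathrm{H}^{2n}(\mathcal{F}).
\]
Since $\mathcal{F} = \mathcal{F}_\Pi = \ker \alpha$ has rank $2n$, we have $\Omega^{2n+1}(\mathcal{F}) = \Gamma \bigwedge^{2n+1} \mathrm{T}^*\mathcal{F} = 0$, and so $\mathrm{H}^{2n+1}(\mathcal{F}) = 0$. The claimed isomorphism $\mathrm{H}^{2n+1}_\Pi(M) \cong \mathrm{H}^{2n}(\mathcal{F}_\Pi)$ follows at once.

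To make this explicit, we use the map $\kappa$ from \eqref{eq:splitting-reeb-sections} in top degree. Because $\mathfrak{X}^{2n+1}(\mathcal{F}) = 0$, $\kappa$ reduces to $Y \mapsto R \wedge i(Y)$ for $Y \in \mathfrak{X}^{2n}(\mathcal{F})$, and it intertwines $\diff_\Pi$ with $-\diff_{\Pi_{\mathcal{F}}}$. The leafwise symplectic form $\beta$ gives an isomorphism $\mathrm{H}^{2n}_{\Pi_{\mathcal{F}}}(\mathcal{F}) \cong \mathrm{H}^{2n}(\mathcal{F})$ via $(\beta^\flat)^{\wedge}$, and the sign change does not affect cohomology. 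Every top multivector field can therefore be written as $R \wedge i(Y)$, and it is exact exactly when $Y$ is leafwise $\diff_{\Pi_{\mathcal{F}}}$-exact.

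The only point that needs any care is that the splitting $\kappa$ is a genuine isomorphism of complexes. This is where the Reeb field being Poisson ($\mathcal{L}_R \Pi = 0$) is used, and it is already established in the proof of \Cref{thm:poisson-cohomology-cosymplectic}. No compactness or mapping-torus hypothesis is needed. If one wants the result in terms of the fibration, it can further be combined with \Cref{thm:foliated-cohomology-sections-bundle} to get $\Gamma\mathcal{H}^{2n}(L)$, but that is not required for the corollary as stated.
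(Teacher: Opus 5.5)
Your proposal is correct. It is exactly the derivation that the corollary's placement indicates: specialise Osorno's splitting (\Cref{thm:poisson-cohomology-cosymplectic}) to degree $2n+1$, then use $\Omega^{2n+1}(\mathcal{F})=0$, which holds because $\ker\alpha$ has rank $2n$. Your explicit check is a clean way to see the isomorphism directly: every top multivector field has the form $R\wedge i(Y)$, and the $i(X)$-part of any primitive contributes $i(\diff_{\Pi_{\mathcal F}}X)\in\mathfrak{X}^{2n+1}(\mathcal{F})=0$.

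The paper itself gives no argument and defers to \cite[Cor.\,2]{martinezmiranda}. There the statement comes from a different route, stated for unimodular regular Poisson manifolds: top Poisson cohomology is identified with zeroth Poisson homology, and the latter with top foliated cohomology. That route needs unimodularity, which is automatic here with $\Theta=\alpha\wedge\beta^n$, and it works in the compact setting. Your route needs only that the Reeb field is Poisson, and, as you note, it requires no compactness or mapping-torus structure.
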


\begin{remark}[Relation with the symplectic case]
    If $(M,\omega)$ is a symplectic manifold (i.e., there is ``no foliation direction’’), then
    $\mathcal{F}_\Pi$ has a single leaf, and
    \[
        \mathrm{H}_\Pi^\bullet(M) \cong \mathrm{H}^\bullet(M)
    \]
    recovering the standard fact that Poisson cohomology
    of a symplectic manifold identifies with its de~Rham cohomology.
    Thus, the cosymplectic case constitutes the ``next simplest’’ situation
    after the symplectic case and provides a clean model for Poisson cohomology
    in corank-one settings.
\end{remark}

\begin{remark}
    Following the previous sections, one may try to approach the computation from the Mayer--Vietoris perspective in the compact setting. In this case, the local structure of the cohomology groups is given by $\mathrm{H}_{\Pi}^\bullet(U \times L) \cong \mathfrak{X}^\bullet(U) \otimes \mathrm{H}^\bullet(L)$. Following this idea, the fact that we obtain two copies of $\mathrm{H}^\bullet(\mathcal{F})$ follows from the isomorphism $\mathfrak{X}^\bullet(U) \cong \mathcal{C}^\infty(U) \oplus \mathcal{C}^\infty(U) \, \partial_t$, and the monodromy map $\varphi$ appears implicitly in the groups $\mathrm{H}^\bullet(\mathcal{F})$ following equation \eqref{eq:foliated-cohomology-wang}.
\end{remark}

\section{Equivariant cohomology theories for compact cosymplectic structures} \label{sec:equiv-cohomology-cosymplectic}

In this section we discuss the equivariant counterparts of the previous cohomology theories. Let us consequently endow $M$ with an equivariant cosymplectic structure $(\alpha, \beta)$ as in \Cref{def:equivariant-cosymplectic}. Following \Cref{ssec:equiv-coorient-codimone-fol}, we will only consider group actions which preserve the leaves of the foliation $\mathcal{F}_\alpha$ individually: in terms of the fibration $\pi \colon M \to \mathbb{S}^1$, this is equivalent to $\pi \rho_g = \pi$ for all $g \in G$.

Given our proofs will proceed by applying the Mayer--Vietoris principle in suitable trivializations, we first have to find local normal models for the $G$-action. In general, the $G$-action may vary from fibre to fibre. In our case, the existence of a cosymplectic structure imposes additional constraints. We have the following result.

\begin{lemma} \label{lem:trivialization-action}
    Let $(M, \alpha, \beta)$ be a cosymplectic manifold in the assumptions of \Cref{thm:mapping-torus}, i.e., assume the foliation $\mathcal{F}$ is defined in terms of a fibration $\pi \colon M \to \mathbb{S}^1$. Let $\rho \colon G \times M \to M$ be a $G$-action such that $\pi \rho_g = \pi$ for all $g \in G$.

    Then, for every $p \in \mathbb{S}^1$ there exists a trivialization $\varphi \colon \pi^{-1}(U) \to U \times L$ and a $G$-action $\gamma' \colon G \times L \to L$ such that, denoting $\gamma_ g \coloneqq \varphi \rho_g \varphi^{-1}$, we have $\operatorname{pr}_2 \gamma_g =  \gamma'_g \operatorname{pr}_2$ for all $g \in G$. In other words, the induced action satisfies
    \begin{equation*}
        \gamma_g(t, q) = \big( t, \gamma_g'(q) \big).
    \end{equation*}
\end{lemma}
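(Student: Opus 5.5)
Fix $p\in\mathbb{S}^1$, a contractible neighbourhood $U$ of $p$ and the fibre $L=\pi^{-1}(p)$. The plan is to build the trivialization by flowing $L$ along a $G$-invariant transverse vector field, so that the flow commutes with the action.

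First I would choose the vector field. The Reeb field $R$ of the equivariant cosymplectic structure is $G$-invariant: $\alpha$ and $\beta$ are $G$-invariant and $R$ is uniquely determined by $\iota_R\alpha=1$, $\iota_R\beta=0$. More generally, any transverse field can be averaged over the compact group $G$, as in the proof of \Cref{prop:equiv-ses-cochain}. After rescaling, or by using $\alpha=c\,\pi^*\diff\theta$ from \Cref{thm:mapping-torus}, we may assume $\diff\pi(R)=\partial_\theta$, so that the flow $\Phi^t_R$ maps fibres to fibres, $\pi\circ\Phi^t_R=\pi+t$. Since $M$ is compact the flow is complete. Invariance gives $\rho_g\circ\Phi^t_R=\Phi^t_R\circ\rho_g$ for all $g\in G$ and $t\in\mathbb{R}$, because $(\rho_g)_*R=R$.

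Next I would define the trivialization. Parametrize $U$ as an interval $(p-\epsilon,p+\epsilon)$ and set
\begin{equation*}
\psi\colon U\times L\to\pi^{-1}(U),\qquad \psi(t,q)=\Phi^{t-p}_R(q).
\end{equation*}
This map is a diffeomorphism: it is a local diffeomorphism because $R$ is transverse to the fibres, and it is bijective because each fibre over $t$ is hit exactly once by flowing $L$ for time $t-p$. Take the trivialization of the statement to be $\psi^{-1}$. Because $\pi\rho_g=\pi$, the fibre $L$ is $G$-invariant, and we define $\gamma'_g\coloneqq\rho_g|_L$. Then
\begin{equation*}
\rho_g\psi(t,q)=\rho_g\Phi^{t-p}_R(q)=\Phi^{t-p}_R(\rho_g q)=\psi(t,\gamma'_g q),
\end{equation*}
so that $\gamma_g(t,q)=(t,\gamma'_g(q))$, which is the claim. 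The first component is preserved precisely because $\pi\rho_g=\pi$. As a bonus, since $R$ preserves $\beta$, this trivialization also identifies the leafwise forms, so $\gamma'$ acts by symplectomorphisms of $(L,\beta_L)$.

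The main obstacle is securing a $G$-invariant transverse field whose flow is fibre-preserving. This is where the hypotheses come in. If the action is only assumed fibre-preserving, without preserving a cosymplectic structure, I would average a lift $\tilde R$ of $\partial_\theta$ over $G$ with respect to Haar measure. The averaged field still satisfies $\diff\pi(R)=\partial_\theta$, because $\pi\rho_g=\pi$ implies $\diff\pi\circ(\rho_g)_*=\diff\pi$. So the argument goes through using only compactness of $G$ and the fibre-preserving condition.
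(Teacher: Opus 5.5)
Your proposal is correct and follows the paper's own sketch. Like the paper, you build the trivialization by flowing the fibre along the $G$-invariant Reeb field, whose flow commutes with $\rho_g$, so the action becomes $(t,q)\mapsto(t,\rho_g|_L(q))$. Your averaging remark is a small but useful addition: averaging any lift of the base field over $G$ preserves its projection because $\pi\rho_g=\pi$. The lemma as stated does not assume that $G$ preserves $\alpha$ and $\beta$, whereas the paper's sketch tacitly uses this.
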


We sketch the details of the proof. Remember from \cite{guillemin_codimension_2011} that the trivializing charts of the fibration $\pi \colon M \to \mathbb{S}^1$ are obtained by flowing a compact leaf $L$ along the Reeb vector field $R$ of the cosymplectic structure, i.e.,
\begin{equation*}
    \begin{array}{rccc}
        \varphi \colon & (-\varepsilon, \varepsilon) \times L & \longrightarrow & \pi^{-1}(-\varepsilon, \varepsilon) \\
         & (t, p) & \longmapsto & \Phi_R^t(p).
    \end{array}
\end{equation*}
Because $R$ is $G$-invariant, the flow $\Phi^t_R$ conjugates the $G$-action.

In what follows, the model action of $G$ on the fibre $L$ will be of great importance in all computations. More concretely, many of our results will be expressed in terms of the equivariant cohomology groups $\mathrm{H}_G^\bullet(L)$. Remember that $L$ is a symplectic manifold and that, for an equivariant cosymplectic structure, the $G$-action on $L$ is Hamiltonian (cf. \cite[Thm.\,3.2]{miranda_hamiltonian_2026}). It is a classical theorem of Kirwan that compact Hamiltonian $G$-spaces are \emph{equivariantly formal}. More precisely:

\begin{theorem}[Kirwan {\cite{kirwan}}] \label{thm:kirwan-formality}
    Let $(M,\omega)$ be a compact symplectic manifold equipped with a Hamiltonian action of a compact, connected Lie group $G$. Then the $G$-equivariant cohomology of $M$ is
    equivariantly formal, that is,
    \begin{equation} \label{eq:kirwan-formality}
        \mathrm{H}_G^\bullet(M;\mathbb{R}) \cong \mathrm{H}^\bullet(M;\mathbb{R}) \otimes \mathrm{H}^\bullet(\mathrm{B}G; \mathbb{R})
    \end{equation}
    as graded $\mathbb{R}$-algebras. Equivalently, the Serre spectral sequence of the Borel fibration
    \begin{equation} \label{eq:borel-fibration}
        M \hookrightarrow M_G \rightarrow \mathrm{B}G
    \end{equation}
    collapses at the $E_2$-page.
\end{theorem}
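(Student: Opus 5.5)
The strategy is the standard one: prove that $\mathrm{H}^\bullet(M)$ is a free $\mathrm{H}^\bullet(\mathrm{B}G)$-module by showing that the Serre spectral sequence of the Borel fibration \eqref{eq:borel-fibration} degenerates at $E_2$. The degeneration will come from a surjectivity statement, namely that the restriction $\mathrm{H}^\bullet_G(M) \to \mathrm{H}^\bullet(M)$ is onto.

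\textbf{Reduction to surjectivity.} By the Leray--Hirsch theorem, it suffices to show that every class in $\mathrm{H}^\bullet(M)$ extends to an equivariant class. Since $G$ is connected, $\pi_1(\mathrm{B}G)=0$, so the local system is trivial and $E_2 = \mathrm{H}^\bullet(\mathrm{B}G)\otimes \mathrm{H}^\bullet(M)$. The algebra isomorphism in \eqref{eq:kirwan-formality} is then to be read as an isomorphism of $\mathrm{H}^\bullet(\mathrm{B}G)$-modules, with the algebra structure holding at the associated graded level.

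\textbf{Reduction to a torus.} We reduce to a maximal torus $T\subset G$ via $\mathrm{H}_G^\bullet(M)=\mathrm{H}_T^\bullet(M)^W$. The restricted $T$-action is still Hamiltonian, so it is enough to treat $T$; surjectivity for $T$ together with the averaging over the Weyl group $W$ then gives the result for $G$.

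\textbf{The torus case via Morse theory.} Choose a generic $X\in\mathfrak t$ and set $f=\mu^X$. Then $f$ is a Morse--Bott function whose critical set is $M^T$, and its critical submanifolds have even indices, since the negative normal bundles are complex $T$-representations. The key step is Kirwan's perfection argument. For each critical component $C$ of index $\lambda_C$, the equivariant Thom--Gysin sequence of the pair $(M_c^+, M_c^-)$ passes through $\mathrm{H}_T^{\bullet-\lambda_C}(C)$. The composition with restriction to $C$ is multiplication by the equivariant Euler class of the negative normal bundle. Because $T$ acts on the fibres with no fixed nonzero vectors, this Euler class is not a zero divisor in $\mathrm{H}_T(C)=\mathrm{H}(C)\otimes \mathrm{H}(\mathrm{B}T)$. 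This forces the long exact sequences to split into short exact ones, so $f$ is equivariantly perfect: $P_t^T(M)=\sum_C t^{\lambda_C}P^T_t(C)$.

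\textbf{Comparison and conclusion.} On each $C$ the action is trivial, so $P_t^T(C)=P_t(C)P_t(\mathrm{B}T)$. The ordinary Morse inequalities give $P_t(M)\le \sum_C t^{\lambda_C}P_t(C)$. The spectral sequence gives $P^T_t(M)\le P_t(M)P_t(\mathrm{B}T)$, with equality exactly when the sequence collapses. Chaining these, we get
\[
\sum_C t^{\lambda_C}P_t(C)\,P_t(\mathrm{B}T)=P^T_t(M)\le P_t(M)P_t(\mathrm{B}T)\le \sum_C t^{\lambda_C}P_t(C)\,P_t(\mathrm{B}T),
\]
so both inequalities are equalities, and the spectral sequence collapses at $E_2$.

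The main obstacle is the Euler class non-zero-divisor step. It requires splitting the normal bundle into weight subbundles and using that the top equivariant Chern class has leading term a nonzero product of weights in $\mathrm{H}(\mathrm{B}T)$. The technical points are compactness, for the Morse theory, and genericity of $X$, which makes the critical set equal to $M^T$.
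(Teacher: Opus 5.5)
The paper gives no proof of this statement; it only quotes it from Kirwan. Your argument is essentially the standard one from Kirwan's work, and it is correct: you reduce to a maximal torus, show via the Atiyah--Bott Euler class criterion that a generic component $\mu^X$ is equivariantly perfect, and then squeeze $P_t^T(M)$ between the spectral-sequence bound and the ordinary Morse--Bott inequalities.

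Your caveat about the algebra structure is also right: the displayed isomorphism holds only as $\mathrm{H}^\bullet(\mathrm{B}G)$-modules, not as algebras. Already for the rotation action on $\mathbb{S}^2$ one has $\mathrm{H}^\bullet_{\mathbb{S}^1}(\mathbb{S}^2)\cong\mathbb{R}[u,x]/(x^2-ux)$, which is reduced, whereas $\mathrm{H}^\bullet(\mathbb{S}^2)\otimes\mathrm{H}^\bullet(\mathrm{B}\mathbb{S}^1)\cong\mathbb{R}[u,x]/(x^2)$ is not. One small slip: in your opening sentence the free $\mathrm{H}^\bullet(\mathrm{B}G)$-module should be $\mathrm{H}^\bullet_G(M)$, not $\mathrm{H}^\bullet(M)$.
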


Recall the equivariant cohomology of a $G$-manifold $M$ may be computed either as $\mathrm{H}^\bullet(M_G)$ or via the Cartan model $(\Omega_G^\bullet(M), \diff_G)$. The Serre spectral sequence of the Borel fibration \eqref{eq:borel-fibration} induces on $\mathrm{H}_G^\bullet(M)$ the \emph{Borel filtration}, defined by the skeletal filtration of $\mathrm{B}G$.  Filtering the Cartan complex by polynomial degree in $\operatorname{Sym}^\bullet(\mathfrak{g}^\ast)$ yields a second spectral sequence, which is used (cf. Tu and Arabia~\cite[Appendix~A]{tu_introductory_2020}) to prove the equivariant de~Rham isomorphism
\begin{equation*}
    \mathrm{H}^\bullet(M_G) \cong \mathrm{H}(\Omega_G^\bullet(M), \diff_G)
\end{equation*}
via successive approximations $M_G(m) = M \times \mathrm{E}G(m)$ to the homotopy quotient $M_G$. Consequently, the spectral sequence by polynomial degree also computes the equivariant cohomology of $M$.

\begin{remark}
    Observe in the case where equivariant formality holds that, since $\operatorname{Sym}^\bullet(\mathfrak{g}^*)^G$ is finite-dimensional at every fixed degree and, similarly, so is $\mathrm{H}^\bullet(L)$ since $L$ is compact, the equivariant cohomology groups $\mathrm{H}_G^\bullet(L)$ are finite-dimensional for every degree.

    If the manifold is not equivariantly formal, the same remark applies observing the groups $\mathrm{H}^\bullet(M) \otimes \mathrm{H}^\bullet(\mathrm{B}G)$ correspond to the second page of the spectral sequence of the fibration $M_G \to \mathrm{B}G$ which computes $\mathrm{H}_G^\bullet(M)$.
\end{remark}

\subsection{Equivariant de Rham cohomology}

To compute the equivariant de Rham cohomology groups, we apply the same technique as in the previous sections. The necessary technical lemma, i.e., the Mayer--Vietoris principle, holds for equivariant cohomology. For a proof we refer to \cite[Thm.\,25.15]{tu_introductory_2020}.

Let us now discuss the local structure of equivariant de~Rham cohomology, as a preliminary step to the application of the Mayer--Vietoris theorem.

\begin{lemma} \label{lem:equivariant-de-rham-product}
    Let us consider a product manifold $U \times L$ equipped with an action $\rho$ of a Lie group $G$ satisfying $\operatorname{pr}_1 \rho_g = \operatorname{pr}_1$ for all $g \in G$. Then, the equivariant cohomology groups are given by
    \begin{equation}
        \mathrm{H}_G^\bullet(U \times L) \cong \mathrm{H}_G^\bullet(L).
    \end{equation}
\end{lemma}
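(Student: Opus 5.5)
The plan is to show that the projection $\operatorname{pr}_2 \colon U \times L \to L$ and the inclusion $\iota_{t_0} \colon L \to U \times L$, $q \mapsto (t_0, q)$, for a fixed $t_0 \in U$, are mutually inverse equivariant homotopy equivalences. Equivariant cohomology is invariant under equivariant homotopy, so this gives the isomorphism. Throughout, $U \subseteq \mathbb{R}$ (or $U \subseteq \mathbb{S}^1$) is taken to be a contractible open interval, as in the Mayer--Vietoris setup. Following \Cref{lem:trivialization-action}, I would further assume that the action has the product form $\rho_g(t,q) = (t, \gamma'_g(q))$.

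This product form is needed: the bare hypothesis $\operatorname{pr}_1 \rho_g = \operatorname{pr}_1$ only says that $G$ preserves each slice $\{t\} \times L$, and the actions on different slices could differ. Without it, $\operatorname{pr}_2$ need not be equivariant. Under the product form, both $\operatorname{pr}_2$ and $\iota_{t_0}$ are $G$-equivariant. We have $\operatorname{pr}_2 \circ \iota_{t_0} = \operatorname{id}_L$. The composite $\iota_{t_0} \circ \operatorname{pr}_2$ is joined to $\operatorname{id}_{U \times L}$ by the homotopy $H_s(t,q) = ((1-s)t + s t_0, q)$, which uses convexity of $U$. Each $H_s$ commutes with $\rho_g$ because $G$ acts only on the second factor.

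To conclude in the Cartan model, I would invoke the standard fact that equivariantly homotopic maps induce the same map on $\mathrm{H}_G^\bullet$. Concretely, I would use the homotopy operator $K = \int_0^1 \iota_{\partial_s} H^* \, \diff s$ on $\Omega^\bullet(U \times L)$. Since $H$ is $G$-equivariant and $\partial_s$ commutes with the fundamental vector fields $X^\#$, the operator $K$ is $G$-equivariant and anticommutes with each $\iota_{X^\#}$. Tensoring with $\operatorname{id}_{\operatorname{Sym}^\bullet(\mathfrak{g}^*)}$ and restricting to invariants then gives a chain homotopy satisfying $\diff_G K + K \diff_G = \operatorname{id} - (\iota_{t_0}\operatorname{pr}_2)^*$. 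Alternatively, one can argue on homotopy quotients: $(U \times L)_G \cong U \times L_G$, which is homotopy equivalent to $L_G$, and then use \eqref{eq:def-equiv-cohomology-homotopy}.

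The main obstacle is the hypothesis, not the homotopy argument. As stated, with only $\operatorname{pr}_1\rho_g = \operatorname{pr}_1$, the right-hand side $\mathrm{H}_G^\bullet(L)$ is ambiguous, since it is unclear which action on $L$ is meant. I would resolve this by first invoking \Cref{lem:trivialization-action}, in the cosymplectic setting where the lemma will be used, to put the action in product form. If that is unavailable, one can still note that the slice actions $\rho|_{\{t\}\times L}$ are conjugate to one another via the $G$-invariant Reeb flow. Either way, $\mathrm{H}_G^\bullet(L)$ is then computed with respect to any one fibre action, and the argument above applies verbatim.
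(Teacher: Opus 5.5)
Your proposal is correct and follows essentially the same route as the paper: contractibility of $U$ and the triviality of the action on the $U$-factor give an equivariant homotopy equivalence $U \times L \simeq L$, hence a homotopy equivalence of homotopy quotients. Your Cartan-model homotopy operator $K$ is just a more explicit version of that step. Your remark that the product form $\rho_g(t,q) = (t, \gamma'_g(q))$ from \Cref{lem:trivialization-action} is tacitly needed is fair, since the paper's one-line proof assumes it; note, though, that for compact $G$ and compact $L$ it already follows from $\operatorname{pr}_1 \rho_g = \operatorname{pr}_1$ alone, by averaging $\partial_t$ to a $G$-invariant lift and flowing along it to trivialize equivariantly, exactly as the Reeb field does in the cosymplectic case.
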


\begin{proof}
    Because $U \subset \mathbb{R}$ is contractible and the $G$-action on $U$ is trivial, we have an equivariant homotopy between $U \times L$ and $L$. This yields a homotopy equivalence between the homotopy quotients $(U \times L) \times_G \mathrm{E}G$ and $L \times_G \mathrm{E}G$, proving that $\mathrm{H}_G^\bullet(U \times L) \cong \mathrm{H}^\bullet_G(L)$.
\end{proof}

\begin{theorem} \label{thm:mv-equiv-deRham}
    Let $M$ be a manifold with an action of a compact, connected Lie group $G$ and let $(\underline{\alpha}, \underline{\beta})$ be an equivariant cosymplectic structure. Moreover, assume $M$ can be described as a fibre bundle $\pi \colon M \to \mathbb{S}^1$ following \Cref{thm:mapping-torus}. Then, the equivariant cohomology groups can be computed as
    \begin{equation} \label{eq:mv-equiv-deRham}
        \mathrm{H}^\bullet_G(M) \cong \ker \big( (\varphi^\bullet - \operatorname{id}_{\mathrm{H}^\bullet(L)}) \otimes \operatorname{id}_{\operatorname{Sym}^\bullet(\mathfrak{g}^*)} \big) \oplus \operatorname{coker} \big( (\varphi^{\bullet - 1} - \operatorname{id}_{\mathrm{H}^{\bullet - 1}(L)}) \otimes \operatorname{id}_{\operatorname{Sym}^\bullet(\mathfrak{g}^*)} \big).
    \end{equation}
\end{theorem}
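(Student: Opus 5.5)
We follow the Mayer--Vietoris strategy of \Cref{thm:deRham-cosymplectic}, now in the Cartan model. Choose a cover $\{U,V\}$ of $\mathbb{S}^1$ by two contractible arcs with $U\cap V$ a disjoint union of two intervals. Over each arc use the Reeb-flow trivializations of \Cref{lem:trivialization-action}. In these trivializations the $G$-action is fibrewise, $\gamma_g(t,q)=(t,\gamma'_g(q))$. Because $R$ is $G$-invariant, the transition map on one component of the overlap is the identity and on the other it is the monodromy $\varphi=\Phi_R^c$. The monodromy is therefore $G$-equivariant, and $\varphi^*$ acts on $\mathrm{H}_G^\bullet(L)$.

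The equivariant Mayer--Vietoris sequence \cite[Thm.\,25.15]{tu_introductory_2020} applies to the $G$-invariant cover $\{\pi^{-1}U,\pi^{-1}V\}$, since $G$ preserves fibres. Combined with \Cref{lem:equivariant-de-rham-product}, it gives the long exact sequence
\begin{equation*}
\cdots\to \mathrm{H}_G^k(M)\to \mathrm{H}_G^k(L)\oplus\mathrm{H}_G^k(L)\to \mathrm{H}_G^k(L)\oplus \mathrm{H}_G^k(L)\to \mathrm{H}_G^{k+1}(M)\to\cdots
\end{equation*}
The middle map is $(a,b)\mapsto(a-b,\,a-\varphi^*b)$. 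Composing with the automorphism $(x,y)\mapsto(x,\,y-x)$ of the target, its kernel is isomorphic to $\ker(\varphi^*-\operatorname{id})$ on $\mathrm{H}_G^k(L)$ and its cokernel to $\operatorname{coker}(\varphi^*-\operatorname{id})$. This is the equivariant Wang sequence. Splitting it over $\mathbb{R}$ yields
\begin{equation*}
\mathrm{H}_G^k(M)\cong \ker(\varphi^*-\operatorname{id})|_{\mathrm{H}_G^k(L)}\oplus\operatorname{coker}(\varphi^*-\operatorname{id})|_{\mathrm{H}_G^{k-1}(L)}.
\end{equation*}

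It remains to identify $\mathrm{H}_G^\bullet(L)$ with $\mathrm{H}^\bullet(L)\otimes\operatorname{Sym}^\bullet(\mathfrak{g}^*)^G$, so that $\varphi^*$ becomes $\varphi^\bullet\otimes\operatorname{id}$. The $G$-action on $L$ is Hamiltonian \cite[Thm.\,3.2]{miranda_hamiltonian_2026}, so Kirwan's \Cref{thm:kirwan-formality} supplies an isomorphism $\mathrm{H}_G^\bullet(L)\cong \mathrm{H}^\bullet(L)\otimes \mathrm{H}^\bullet(\mathrm{B}G)$. Here $\mathrm{H}^\bullet(\mathrm{B}G)\cong\operatorname{Sym}^\bullet(\mathfrak{g}^*)^G$, which is the precise meaning of $\operatorname{Sym}^\bullet(\mathfrak{g}^*)$ in \eqref{eq:mv-equiv-deRham}. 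Since kernels and cokernels commute with tensoring by a vector space over $\mathbb{R}$, this gives \eqref{eq:mv-equiv-deRham} once the isomorphism is known to intertwine $\varphi^*$ with $\varphi^\bullet\otimes\operatorname{id}$.

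This naturality is the main obstacle, because Kirwan's isomorphism is not canonical. It depends on a choice of lift $\mathrm{H}^\bullet(L)\to\mathrm{H}_G^\bullet(L)$, and $\varphi^*$ need not preserve an arbitrary such lift. To handle it, use the Borel filtration by polynomial degree in the Cartan model. The pullback $\varphi^*$ preserves this filtration, because $\varphi$ is equivariant and pullback commutes with $\diff_G$ and respects the $\operatorname{Sym}^\bullet$ grading. By Kirwan's theorem the associated graded is $\mathrm{H}^\bullet(L)\otimes\operatorname{Sym}^\bullet(\mathfrak{g}^*)^G$, and on it $\varphi^*$ induces $\varphi^\bullet\otimes\operatorname{id}$. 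Each filtered degree is finite-dimensional by the Remark after \Cref{thm:kirwan-formality}. The proposal then relies on choosing a $\varphi^*$-compatible splitting of this filtration, with two candidate routes. The first is to average the lift over the closure of the group generated by $\varphi^*$, when that closure is compact. The second is to take the lift given by an equivariant Hodge or Kirwan-type decomposition that is natural under $G$-equivariant symplectomorphisms isotopic to the identity through the Reeb flow. Neither route is verified here; if neither works, one can still conclude an isomorphism of associated graded objects, with dimensions agreeing degreewise.
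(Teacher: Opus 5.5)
Your Mayer--Vietoris/Wang argument is correct and is the paper's proof: equivariant Mayer--Vietoris for the $G$-invariant cover $\{\pi^{-1}U,\pi^{-1}V\}$, \Cref{lem:equivariant-de-rham-product} on each piece, then exactness. You are in fact more careful than the paper, which writes a single copy of $\mathrm{H}^k_G(L)$ for the two components of $U\cap V$. This proves the theorem when $\varphi^\bullet$ is read as the map $\varphi_G^*$ induced on $\mathrm{H}^\bullet_G(L)$, and that is all the paper's proof establishes. The paper simply declares $j^\bullet=(\varphi^\bullet-\operatorname{id})\otimes\operatorname{id}$, which is literally true only on the Cartan complex $(\Omega^\bullet(L)\otimes\operatorname{Sym}^\bullet(\mathfrak g^*))^G$. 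Kirwan's theorem enters only in \Cref{thm:dR-equiv-formality}. So the naturality problem you isolate is genuine, and the paper does not resolve it either.

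As a proof of the displayed formula with $\varphi^\bullet\otimes\operatorname{id}$ acting on $\mathrm{H}^\bullet(L)\otimes\operatorname{Sym}^\bullet(\mathfrak g^*)^G$, your argument has a gap exactly where you say, and your fallback does not close it. Since $\dim\ker T=\dim\operatorname{coker}T$ for an endomorphism of a finite-dimensional space, the formula is equivalent to $\dim\ker(\varphi_G^*-\operatorname{id})=\dim\ker(\varphi^\bullet\otimes\operatorname{id}-\operatorname{id})$ in each degree. That equality is not determined by associated graded data. For example, $T=\begin{pmatrix}1&1\\0&1\end{pmatrix}$ preserves the flag $\langle e_1\rangle$ and has $\operatorname{gr}T=\operatorname{id}$, yet its fixed space is one-dimensional. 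Your filtration argument therefore yields only the inequality $\dim\ker(\varphi_G^*-\operatorname{id})\le\dim\ker(\varphi^\bullet\otimes\operatorname{id}-\operatorname{id})$, not "dimensions agreeing degreewise". For equality you need $\varphi_G^*$ to be conjugate to its associated graded. One sufficient condition is that $\varphi_G^*$ is semisimple on each $\mathrm{H}^k_G(L)$: then every invariant subspace has an invariant complement, which gives a $\varphi_G^*$-compatible splitting of the Borel filtration. This covers your averaging route, e.g.\ $\varphi$ of finite order or preserving a $G$-invariant metric, in which case $\varphi_G^*$ has finite order. However, symplectic monodromies need not act semisimply (a Dehn-twist factor acting on $\mathrm{H}^1$, say). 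In general an additional geometric input is required, and neither your proposal nor the paper supplies it.
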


\begin{proof}
    For the proof we use the approach to Wang's sequence through a Mayer--Vietoris argument. Let us choose a covering of the circle $\mathbb{S}^1$ by two open sets $U, V \subset \mathbb{S}^1$ such that $U \cong V \cong \mathbb{R}$ and $U \cap V \cong \mathbb{R} \sqcup \mathbb{R}$, which we can furthermore assume to be trivializing sets in the assumptions of \Cref{lem:trivialization-action}. We have an induced covering of the total space by the sets $\pi^{-1}(U)$ and $\pi^{-1}(V)$. Let us choose trivializations of the fibre bundle $\pi \colon M \to \mathbb{S}^1$, which we denote by $\psi_U \colon \pi^{-1}(U) \to U \times L$, $\psi_V \colon \pi^{-1}(V) \to V \times L$, and $\psi_{U \cap V} \colon \pi^{-1}(U \cap V) \to (U \cap V) \times L$. Furthermore we can assume that the restriction maps in these trivializations are given by
    \begin{equation*}
        \psi_U \circ i_{\pi^{-1}(U \cap V)}^{\pi^{-1}(U)} \circ \psi_{U \cap V}^{-1} = \operatorname{id}_{U \cap V}^U \times \operatorname{id}_L \quad \text{and} \quad \psi_V \circ i_{\pi^{-1}(U \cap V)}^{\pi^{-1}(V)} \circ \psi_{U \cap V}^{-1} = \operatorname{id}_{U \cap V}^V \times \varphi,
    \end{equation*}
    where $\varphi \colon L \to L$ is the monodromy map of the bundle.

    Now, because we have $\pi \rho_g = \pi$ by assumption, the sets $\pi^{-1}(U)$ and $\pi^{-1}(V)$ are $G$-invariant. The equivariant Mayer--Vietoris theorem (cf. \cite[Thm.\,25.15]{tu_introductory_2020}) implies the following short sequence is exact:
    \begin{equation}
        \begin{tikzcd}[column sep=small]
        	0 & {\Omega_G^\bullet(M) } & {\Omega_G^\bullet(\pi^{-1} U) \oplus \Omega_G^\bullet(\pi^{-1}V)} & {\Omega_G^\bullet( \pi^{-1} (U \cap V) )} & 0
        	\arrow[from=1-1, to=1-2]
        	\arrow[from=1-2, to=1-3]
        	\arrow[from=1-3, to=1-4]
        	\arrow[from=1-4, to=1-5]
        \end{tikzcd}
    \end{equation}
    The corresponding long exact sequence, together with \Cref{lem:equivariant-de-rham-product}, implies the long exact sequence in cohomology
    \begin{equation*}
        \begin{tikzcd}[, cells={nodes={text height=2ex,text depth=0.75ex}}]
           \mathrm{H}^{k+1}_G (M) \arrow{r} & \cdots \\
           \mathrm{H}^k_G (M) \arrow{r} & \mathrm{H}_G^k(L) \oplus \mathrm{H}_G^k(L) \arrow{r}{j^k}
          \arrow[draw=none]{u}[name=Y, shape=coordinate]{}
          \arrow[draw=none]{d}[name=Z,shape=coordinate]{}
          & \mathrm{H}_G^k(L) \arrow[curarrow=Y]{ull}{} \\
            & \cdots \arrow{r}{j^{k - 1}} & \mathrm{H}_G^{k - 1}(L)
          \arrow[curarrow=Z]{ull}{}
        \end{tikzcd}
    \end{equation*}
    where the morphisms $j^\bullet$ are explicitly given by $j^\bullet = (\varphi^\bullet -\operatorname{id}_{\mathrm{H}^\bullet(L)}) \otimes \operatorname{id}_{\operatorname{Sym^\bullet(\mathfrak{g}^*)}}$. A standard argument using the exactness of the diagram concludes the proof.
\end{proof}

Observe that, in the case where we have a trivial $\mathbb{S}^1$-bundle, the equivariant Wang sequence specifies to an equivariant version of the Künneth formula.

\begin{corollary} \label{cor:equiv-Kunneth}
    Let $L \times \mathbb{S}^1$ be a smooth manifold with $L$ compact and equipped with a $G$-action on $L$ and consider its trivial extension to the $\mathbb{S}^1$-factor. Then, the equivariant cohomology group satisfies the relation
    \begin{equation} \label{eq:equiv-kunneth}
        \mathrm{H}_G^\bullet(L \times \mathbb{S}^1) \cong \mathrm{H}_G^\bullet(L) \otimes \mathrm{H}^\bullet(\mathbb{S}^1).
    \end{equation}
\end{corollary}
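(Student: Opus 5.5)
The plan is to obtain \eqref{eq:equiv-kunneth} as the special case of the equivariant Wang sequence in \Cref{thm:mv-equiv-deRham} with trivial monodromy. Concretely, $L \times \mathbb{S}^1$ is the mapping torus of $\varphi = \operatorname{id}_L$, and the $G$-action is by hypothesis trivial on the $\mathbb{S}^1$-factor, so $\operatorname{pr}_{\mathbb{S}^1} \rho_g = \operatorname{pr}_{\mathbb{S}^1}$. The trivializations over a cover $U, V$ of $\mathbb{S}^1$ by two arcs are then just the product ones, and the normal form of \Cref{lem:trivialization-action} holds tautologically. The proof of \Cref{thm:mv-equiv-deRham} uses the cosymplectic structure only to produce such $G$-compatible trivializations. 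Here no cosymplectic structure is needed, and I would simply rerun the Mayer--Vietoris argument.

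The first step is to apply the equivariant Mayer--Vietoris sequence (\cite[Thm.\,25.15]{tu_introductory_2020}) to the $G$-invariant cover $\{L\times U, L \times V\}$. By \Cref{lem:equivariant-de-rham-product}, each of $L\times U$ and $L \times V$ has equivariant cohomology $\mathrm{H}_G^\bullet(L)$, and $L \times (U\cap V)$ has equivariant cohomology $\mathrm{H}_G^\bullet(L)^{\oplus 2}$, since $U\cap V$ has two components.

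The restriction map $j^k$ is computed directly. On one component both restrictions are the identity, and on the other they differ by $\varphi^* = \operatorname{id}$. Hence $\ker j^k$ is the diagonal copy of $\mathrm{H}^k_G(L)$. Likewise $\operatorname{coker} j^{k-1} \cong \mathrm{H}^{k-1}_G(L)$, which equals the cokernel of $\varphi^{k-1}-\operatorname{id} = 0$ on $\mathrm{H}^{k-1}_G(L)$. This matches \eqref{eq:mv-equiv-deRham} with zero maps. Exactness then gives the short exact sequence
\[
0 \to \mathrm{H}^{k-1}_G(L) \to \mathrm{H}^k_G(L\times\mathbb{S}^1) \to \mathrm{H}^k_G(L) \to 0 .
\]
These are vector spaces over $\mathbb{R}$, so the sequence splits, and we get $\mathrm{H}^k_G(L\times\mathbb{S}^1) \cong \mathrm{H}^k_G(L) \oplus \mathrm{H}^{k-1}_G(L)$. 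Since $\mathrm{H}^0(\mathbb{S}^1) = \mathrm{H}^1(\mathbb{S}^1) = \mathbb{R}$, this is exactly $\big(\mathrm{H}_G^\bullet(L)\otimes \mathrm{H}^\bullet(\mathbb{S}^1)\big)^k$.

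The main obstacle is making the isomorphism natural, and compatible with the $\mathrm{H}^\bullet(\mathrm{B}G)$-module and ring structures, rather than merely an abstract dimension count. To handle this, I would write down the explicit map
\[
\Phi\colon \mathrm{H}_G^\bullet(L)\otimes\mathrm{H}^\bullet(\mathbb{S}^1) \to \mathrm{H}^\bullet_G(L\times\mathbb{S}^1), \qquad [\underline{\omega}]\otimes[\theta] \mapsto [\operatorname{pr}_L^*\underline{\omega}\wedge \operatorname{pr}_{\mathbb{S}^1}^*\theta].
\]
Here $\theta$ is an invariant angular form, which is equivariantly closed because the action on $\mathbb{S}^1$ is trivial. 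I would then check that $\Phi$ maps $\mathrm{H}_G^\bullet(L)\otimes 1$ onto a section of the restriction map and $\mathrm{H}_G^\bullet(L)\otimes[\theta]$ onto the image of the connecting morphism $\delta$. For the latter, I would compute $\delta$ via a bump function in the $\mathbb{S}^1$-variable, whose differential represents $[\theta]$. A five-lemma comparison between the split sequence and the Mayer--Vietoris sequence then shows that $\Phi$ is an isomorphism.
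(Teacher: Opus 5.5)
Your proposal is correct and is essentially the paper's argument: the corollary is just \Cref{thm:mv-equiv-deRham} specialized to monodromy $\varphi=\operatorname{id}_L$. Your version is more careful than the paper's, since you track the two components of $U\cap V$, phrase the restriction maps through $\varphi^*$ acting on $\mathrm{H}_G^\bullet(L)$ (so neither a cosymplectic structure nor Kirwan formality is needed), and split the resulting short exact sequence over $\mathbb{R}$; the explicit cross-product map $\Phi$ with the angular form, checked against the connecting morphism via a bump function, is a sound extra that makes the isomorphism natural and multiplicative, which the paper does not address.
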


Equation \eqref{eq:mv-equiv-deRham} completely characterizes the equivariant cohomology groups of $M$ in terms of those for the fibre $L$. However, more can be said in the setting of equivariant cosymplectic structures: since the $G$-action is Hamiltonian when restricted to $L$ (cf.~\cite[Thm.\,3.2]{miranda_hamiltonian_2026}), Kirwan's \Cref{thm:kirwan-formality} applies and we obtain the following refinement of the equivariant cohomology groups.

\begin{corollary} \label{thm:dR-equiv-formality}
    In assumptions of \Cref{thm:mv-equiv-deRham}, the equivariant cohomology groups are given by
    \begin{equation}
        \mathrm{H}_G^\bullet(M) \cong \mathrm{H}^\bullet(M) \otimes \operatorname{Sym}^\bullet(\mathfrak{g}^*)^G.
    \end{equation}
    In particular, $M$ is equivariantly formal.
\end{corollary}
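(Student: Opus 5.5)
The plan is to feed Kirwan's formality (\Cref{thm:kirwan-formality}) for the fibre into the equivariant Wang decomposition of \Cref{thm:mv-equiv-deRham}, and then compare with the non-equivariant Wang decomposition of \Cref{thm:deRham-cosymplectic}.

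First, I will justify that Kirwan's theorem applies to $L$. By \cite[Thm.\,3.2]{miranda_hamiltonian_2026} the induced $G$-action on the compact symplectic leaf $(L,\beta_L)$ is Hamiltonian. Hence, via the Cartan model,
\[
\mathrm{H}_G^\bullet(L)\cong \mathrm{H}^\bullet(L)\otimes \operatorname{Sym}^\bullet(\mathfrak{g}^*)^G .
\]

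The key point is that this isomorphism must intertwine the monodromy. The monodromy $\varphi=\Phi_R^c$ is $G$-equivariant, since $R$ is $G$-invariant and $G$ preserves each leaf. Its action $\varphi^*$ on $\mathrm{H}_G^\bullet(L)$ should therefore correspond to $\varphi^\bullet\otimes\operatorname{id}$. This is how the maps $j^\bullet$ are written in the proof of \Cref{thm:mv-equiv-deRham}, but it deserves justification, because the Kirwan isomorphism is not canonical.

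I would argue this via the Serre/Borel filtration. The spectral sequence for $L_G\to \mathrm{B}G$ degenerates at $E_2=\mathrm{H}^\bullet(L)\otimes \mathrm{H}^\bullet(\mathrm{B}G)$. The map $\varphi$ induces a morphism of Borel fibrations over $\mathrm{B}G$, so $\varphi^*$ preserves the Borel filtration. On the associated graded it acts by $\varphi^\bullet\otimes\operatorname{id}$. Over $\mathbb{R}$ the filtration splits, so at the level of kernel and cokernel dimensions, degree by degree, we may replace $\varphi^*$ by $\varphi^\bullet\otimes\operatorname{id}$. If a genuine isomorphism is needed rather than a dimension count, I would first try the following: choose the Kirwan splitting by equivariant extensions of classes $\varphi$-equivariantly, by averaging over the (finite-dimensional, degreewise) generalized eigenspaces. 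Failing that, I would settle for a filtered statement. I expect this compatibility step to be the main obstacle.

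Granting it, \Cref{thm:mv-equiv-deRham} yields
\[
\mathrm{H}_G^\bullet(M)\cong \bigoplus_{p+2q=\bullet}\Big(\ker(\varphi^p-\operatorname{id})\oplus\operatorname{coker}(\varphi^{p-1}-\operatorname{id})\Big)\otimes \operatorname{Sym}^q(\mathfrak{g}^*)^G .
\]
This uses the fact that kernels and cokernels commute with $\otimes\operatorname{Sym}^q(\mathfrak{g}^*)^G$, which is exact since we tensor over a field. The bracket is exactly $\mathrm{H}^p(M)$ by \Cref{thm:deRham-cosymplectic}, which gives the claimed isomorphism $\mathrm{H}_G^\bullet(M)\cong \mathrm{H}^\bullet(M)\otimes\operatorname{Sym}^\bullet(\mathfrak{g}^*)^G$.

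For equivariant formality, a degreewise dimension equality is not enough; I need surjectivity of the restriction $\mathrm{H}_G^\bullet(M)\to\mathrm{H}^\bullet(M)$. I would show it by comparing the equivariant and ordinary Wang sequences via the forgetful map $\operatorname{Sym}^\bullet(\mathfrak{g}^*)^G\to\mathbb{R}$ (setting polynomial degree zero). That map commutes with the Mayer--Vietoris connecting maps, and $\mathrm{H}_G^\bullet(L)\to\mathrm{H}^\bullet(L)$ is surjective by Kirwan. A five-lemma-type chase on the kernel and cokernel pieces then shows that every class in $\mathrm{H}^\bullet(M)$ lifts. The rank comparison above then gives freeness over $\operatorname{Sym}^\bullet(\mathfrak{g}^*)^G$, so $M$ is equivariantly formal.
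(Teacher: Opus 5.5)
\textbf{Verdict: genuine gap.} Your route is the paper's: apply Kirwan formality on the fibre, distribute kernel and cokernel of $(\varphi^\bullet-\operatorname{id})\otimes\operatorname{id}$ over $\operatorname{Sym}^\bullet(\mathfrak{g}^*)^G$, and compare with \Cref{thm:deRham-cosymplectic}. The paper uses without comment that the monodromy $\varphi_G^*$ on $\mathrm{H}_G^\bullet(L)$ becomes $\varphi^\bullet\otimes\operatorname{id}$ under Kirwan's isomorphism; this is built into the formula for $j^\bullet$ in \Cref{thm:mv-equiv-deRham}. You are right that this needs proof, but your justification is wrong, and everything after ``Granting it'' rests on it.

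\emph{Why the filtration argument fails.} That $\varphi_G^*$ preserves the Borel filtration and induces $\varphi^\bullet\otimes\operatorname{id}$ on the associated graded does not determine $\dim\ker(\varphi_G^*-\operatorname{id})$ or $\dim\operatorname{coker}(\varphi_G^*-\operatorname{id})$. A vector-space splitting of the filtration need not be $\varphi_G^*$-stable, and a filtered endomorphism can have a strictly smaller fixed space than its associated graded. Take $\mathrm{H}^0(L)=\mathbb{R}e_0$, $\mathrm{H}^2(L)=\mathbb{R}e_2$ and $u\in\operatorname{Sym}^1(\mathfrak{g}^*)^G$. The $\mathbb{R}[u]$-linear map $e_0\mapsto e_0$, $e_2\mapsto e_2+ue_0$ is filtered with associated graded the identity. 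Yet in degree $2$ (spanned by $e_2$, $ue_0$) its fixed space, and the cokernel of $(\cdot)-\operatorname{id}$, are $1$-dimensional rather than $2$-dimensional.

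\emph{Why averaging does not rescue it.} The generalized-eigenspace decomposition only disposes of eigenvalues $\neq1$. On the generalized $1$-eigenspace $\varphi_G^*$ is unipotent, so unless it is the identity it has infinite order and there is nothing to average over. A $\varphi$-equivariant choice of Kirwan splitting there is essentially the statement to be proved.

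\emph{Consequence for formality.} The same example shows that $\ker(\varphi_G^*-\operatorname{id})\to\ker(\varphi^\bullet-\operatorname{id})$ need not be surjective ($e_2$ has no fixed lift). So your five-lemma chase for surjectivity of $\mathrm{H}_G^\bullet(M)\to\mathrm{H}^\bullet(M)$ depends on the same unproved point. Incidentally, a degreewise dimension equality \emph{would} suffice for formality, since $E_\infty$ is a subquotient of $E_2=\mathrm{H}^\bullet(\mathrm{B}G)\otimes\mathrm{H}^\bullet(M)$.

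\emph{What is missing.} The missing input is geometric, not formal: $\varphi$ preserves the moment map, because $\mathcal{L}_R\mu_i=\iota_R\diff\mu_i=-\iota_R\iota_{X_i^\#}\beta=0$ as $\iota_R\beta=0$. One way to use it runs as follows.
\begin{enumerate}
\item Pass to a maximal torus $T$ and use Hsiang's criterion: $\dim\mathrm{H}^\bullet(M^T)\le\dim\mathrm{H}^\bullet(M)$, with equality if and only if $M$ is equivariantly formal.
\item $M^T$ is the mapping torus of $\varphi|_{L^T}$, and $\dim\mathrm{H}^\bullet(L^T)=\dim\mathrm{H}^\bullet(L)$. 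So it suffices that $\dim\ker(\varphi^*-\operatorname{id})$ is the same on $\mathrm{H}^\bullet(L)$ and on $\mathrm{H}^\bullet(L^T)$.
\item The inequality $\geq$ holds for any equivariant $\varphi$, by localization plus lower semicontinuity of rank at $u=0$.
\item The inequality $\leq$ holds because $\varphi$ preserves the sublevel sets of a generic component $\mu^X$. This is a perfect Morse--Bott function with critical set $L^T$, so via the Thom isomorphisms $\varphi^*$ on $\mathrm{H}^\bullet(L)$ has associated graded $\varphi^*$ on $\mathrm{H}^\bullet(L^T)$.
\item Formality of $M$ then gives $\mathrm{H}_G^\bullet(M)\cong\mathrm{H}^\bullet(M)\otimes\operatorname{Sym}^\bullet(\mathfrak{g}^*)^G$ by Leray--Hirsch.
\end{enumerate}
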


\begin{proof}
    From equation \eqref{eq:kirwan-formality} we have $\mathrm{H}^\bullet_G(L) \cong \mathrm{H}^\bullet(L) \otimes \operatorname{Sym}(\mathfrak{g}^*)^G$ and, consequently, $\ker ( (\varphi^\bullet - \operatorname{id}_{\mathrm{H}^\bullet(L)}) \otimes \operatorname{id}_{\operatorname{Sym}^\bullet(\mathfrak{g}^*)} ) = \ker(\varphi^\bullet - \operatorname{id}_{\mathrm{H}^\bullet(L)}) \otimes \operatorname{Sym}^\bullet(\mathfrak{g}^*)^G$, as can be checked by expanding a general element in a basis. For a similar reason, $\operatorname{coker}((\varphi^\bullet - \operatorname{id}_{\mathrm{H}^\bullet(L)}) \otimes \operatorname{id}_{\operatorname{Sym}^\bullet(\mathfrak{g}^*)}) \cong \operatorname{coker}(\varphi^\bullet - \operatorname{id}_{\mathrm{H}^\bullet(L)}) \otimes \operatorname{Sym}^\bullet(\mathfrak{g}^*)^G$. Therefore, from this observation together with \Cref{thm:deRham-cosymplectic} and \Cref{thm:mv-equiv-deRham} we conclude
    \begin{align*}
        \mathrm{H}_G^\bullet(M) &\cong \ker (\varphi^\bullet - \operatorname{id}_{\mathrm{H}^\bullet(L)}) \otimes \operatorname{Sym}^\bullet(\mathfrak{g}^*)^G \oplus \operatorname{coker} (\varphi^{\bullet - 1} - \operatorname{id}_{\mathrm{H}^{\bullet - 1}(L)}) \otimes \operatorname{Sym}^\bullet(\mathfrak{g}^*)^G \\
        &\cong \big( \ker (\varphi^\bullet - \operatorname{id}_{\mathrm{H}^\bullet(L)}) \oplus \operatorname{coker} (\varphi^{\bullet - 1} - \operatorname{id}_{\mathrm{H}^{\bullet - 1}(L)}) \big) \otimes \operatorname{Sym}^\bullet(\mathfrak{g})^G \\
        &\cong \mathrm{H}^\bullet(M) \otimes \operatorname{Sym}^\bullet(\mathfrak{g}^*)^G. \qedhere
    \end{align*}
\end{proof}

\subsection{Equivariant foliated cohomology}

As in the previous section, the method of proof is by exactness of the Mayer--Vietoris sequence. Let us first state and proof the customary lemma regarding the corresponding cohomology of the local normal forms.

\begin{proposition} \label{prop:foliated-equiv-kunneth}
    Let us consider a product manifold $\mathbb{R} \times L$ with $L$ compact and with the foliation $\mathcal{F} = \operatorname{pr}_2^* \mathrm{T} L$. Assume a compact, connected Lie group $G$ acts on $L$ in Hamiltonian fashion and consider the induced action on $\mathbb{R} \times L$.

    Under these assumptions, the equivariant foliated cohomology groups can be computed as
    \begin{equation} \label{eq:local-kunneth-equivariant-foliated}
        \mathrm{H}_G^\bullet(\mathcal{F}) = \mathcal{C}^\infty(\mathbb{R}) \otimes \mathrm{H}^\bullet(L) \otimes \operatorname{Sym}^\bullet(\mathfrak{g}^*)^G.
    \end{equation}
\end{proposition}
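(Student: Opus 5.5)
The strategy is to realize the equivariant foliated complex of $\mathbb{R}\times L$ as the Cartan complex of $L$ with coefficients in $\mathcal{C}^\infty(\mathbb{R})$, and then combine \Cref{lem:foliated-cohomology-product} with Kirwan's \Cref{thm:kirwan-formality}.

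\emph{Identify the complex.} Since the foliation is $\mathcal{F}=\operatorname{pr}_2^*\mathrm{T}L$, a foliated form on $\mathbb{R}\times L$ is a smooth family $t\mapsto\omega_t\in\Omega^\bullet(L)$. The differential $\diff_{\mathcal{F}}$ acts as $\diff_L$ at each fixed $t$. The group $G$ acts only on the $L$ factor, and its fundamental vector fields $X^\#$ are tangent to $L$. Hence $\iota_{X^\#}$ and the $G$-action also act fibrewise in $t$. Taking $G$-invariants commutes with the parameter $t$ because $G$ acts trivially on $\mathbb{R}$. So $\Omega^\bullet_G(\mathcal{F})$ is the complex of smooth families $t\mapsto\underline{\omega}_t\in\Omega_G^\bullet(L)$, with $\diff_{\mathcal{F},G}$ acting as $\diff_G$ at each $t$. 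Informally, this is $\mathcal{C}^\infty(\mathbb{R})\,\widehat{\otimes}\,\Omega^\bullet_G(L)$.

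\emph{Reduce to a finite-dimensional statement.} I would filter the Cartan complex by polynomial degree in $\operatorname{Sym}^\bullet(\mathfrak{g}^*)$, exactly as in the spectral-sequence discussion preceding the statement. The resulting spectral sequence has first page $\bigl(\mathrm{H}^\bullet(\mathcal{F})\otimes\operatorname{Sym}^\bullet(\mathfrak{g}^*)\bigr)^G$. Because $G$ is connected, it acts trivially on $\mathrm{H}^\bullet(L)$. By \Cref{lem:foliated-cohomology-product}, this first page is therefore $\mathcal{C}^\infty(\mathbb{R})\otimes\mathrm{H}^\bullet(L)\otimes\operatorname{Sym}^\bullet(\mathfrak{g}^*)^G$.

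\emph{Show degeneration.} All higher differentials are $\mathcal{C}^\infty(\mathbb{R})$-linear, since they are built from $\diff_L$ and $\iota_{X^\#}$, which commute with multiplication by functions of $t$. They are also fibrewise the differentials of the corresponding spectral sequence for $L$, and by \Cref{thm:kirwan-formality} that spectral sequence collapses because the action is Hamiltonian. The spectral sequence is first-quadrant in each total degree, so it converges, and \eqref{eq:local-kunneth-equivariant-foliated} follows. The extension problem is trivial because we work with $\mathbb{R}$-vector spaces.

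\emph{Make collapse rigorous with parameters.} An alternative route is to choose a finite-dimensional space of $\diff_G$-closed equivariant forms on $L$ whose classes form a basis of $\mathrm{H}_G^\bullet(L)$ in each degree. Kirwan's theorem lets us take these as equivariant extensions of a basis of $\mathrm{H}^\bullet(L)$. Coefficients in $\mathcal{C}^\infty(\mathbb{R})$ give a chain map into $\Omega_G^\bullet(\mathcal{F})$. Surjectivity and injectivity in cohomology then amount to solving $\diff_G\underline{\eta}_t=\underline{\omega}_t-\sum_i f_i(t)\underline{c}_i$ smoothly in $t$.

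\emph{Main obstacle.} The hard step is solving this primitive equation smoothly in $t$. The tool is a continuous linear homotopy operator on $\Omega^\bullet(L)$ coming from Hodge theory with a $G$-invariant metric, which handles one polynomial degree at a time. It is applied inductively on polynomial degree, and each step preserves smooth dependence on $t$ and $G$-invariance, the latter after averaging over $G$. I would first try this Hodge-theoretic argument. Most of the care goes into checking that the inductive correction terms stay inside the Cartan complex.
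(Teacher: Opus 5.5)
Your argument is correct, but it takes a different route from the paper. The paper never filters the Cartan complex. It uses Bertelson's density result to identify $\Omega^\bullet_G(\mathcal{F})\cong\mathcal{C}^\infty(\mathbb{R})\,\widehat{\otimes}\,\Omega^\bullet_G(L)$. It then checks that $\mathrm{H}^\bullet_G(L)$ is Hausdorff, since it is finite-dimensional and $\diff_G$ is continuous. Finally it runs Bertelson's Fr\'echet K\"unneth theorem directly on the equivariant complex to get $\mathrm{H}^\bullet_G(\mathcal{F})\cong\mathcal{C}^\infty(\mathbb{R})\otimes\mathrm{H}^\bullet_G(L)$, and applies Kirwan.

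You instead confine all smooth-dependence-on-$t$ issues to the $E_1$ page, where the non-equivariant \Cref{lem:foliated-cohomology-product} already handles them. You then kill the higher differentials by comparison with $L$. That step is rigorous once ``fibrewise'' is phrased as naturality:
\begin{itemize}
\item Restriction to a leaf $\{t_0\}\times L$ is a filtered chain map $\Omega^\bullet_G(\mathcal{F})\to\Omega^\bullet_G(L)$. It therefore induces a morphism of spectral sequences, which on $E_1$ is evaluation at $t_0$.
\item Suppose $d_1,\dots,d_{r-1}$ vanish on both sides, and take $x\in E_r=E_1$. Then $d_r x$ lies in $\mathcal{C}^\infty(\mathbb{R})\otimes(\text{finite-dimensional})$.
\item At every $t_0$ it evaluates to $d_r^{L}(x(t_0))=0$, so $d_r x=0$.
\end{itemize}
Collapse on the $L$ side follows from Kirwan's isomorphism by a dimension count, since the pages are finite-dimensional in each bidegree.

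So the Hodge-theoretic step you flag as the main obstacle is unnecessary. It would also work, because an invariant metric makes the Hodge homotopy continuous and $G$-equivariant. But it reproves by hand the parameter-dependent K\"unneth statement that the paper imports from Bertelson.

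Your route keeps the equivariant level algebraic, with no completed Cartan complexes and no Hausdorffness check. The paper's route gives the intermediate identification with $\mathcal{C}^\infty(\mathbb{R})\otimes\mathrm{H}^\bullet_G(L)$, which needs only finite-dimensionality of $\mathrm{H}^\bullet_G(L)$ rather than collapse. You can recover that generality as well. Every $d_r$ is $\mathcal{C}^\infty(\mathbb{R})$-linear, so the same evaluation argument gives $d_r=\operatorname{id}\otimes d_r^{L}$ and hence $E_r\cong\mathcal{C}^\infty(\mathbb{R})\otimes E_r^{L}$ on every page.
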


\begin{proof}
    Let us begin the proof by constructing a splitting at the level of cochain complexes. We know from Bertelson~\cite[Prop.\,2.3]{bertelson_remarks_2011} that the canonical inclusion $\mathcal{C}^\infty(\mathbb{R}) \otimes \Omega(L) \xhookrightarrow{} \Omega^\bullet(\mathrm{pr}_2^* \mathrm{T}L)$ has dense image. Consequently, the canonical inclusion
    \begin{equation*}
        \mathcal{C}^\infty(\mathbb{R}) \otimes \Omega(L)  \otimes \operatorname{Sym}^\bullet(\mathfrak{g}^*) \xhookrightarrow{} \Omega^\bullet(\mathrm{pr}_2^* \mathrm{T}L) \otimes \operatorname{Sym}^\bullet(\mathfrak{g}^*)
    \end{equation*}
    has also dense image (for example, looking degree by degree and using the fact that $\operatorname{Sym}^k(\mathfrak{g}^*)$ is finitely generated for all $k$). Moreover, the natural inclusion commutes with the corresponding $G$-actions. Hence, after taking completions we have an isomorphism at the level of invariants
    \begin{equation*}
        \mathcal{C}^\infty(\mathbb{R}) \cotimes \Omega_G^\bullet(L) \cong \Omega_G^\bullet(\operatorname{pr}_2^* \mathrm{T}L).
    \end{equation*}

    Now, following Bertelson~\cite[Sec.\,3]{bertelson_remarks_2011}, it suffices to prove that $\mathrm{H}_G^\bullet(L)$ is Hausdorff, given we have already seen it is finite-dimensional. The proof now follows Bertelson~\cite[Sec.\,3]{bertelson_remarks_2011}. Observe we know the operator $\diff$ is continuous and so is $\iota_{X^\#}$ because $M$ is compact. Therefore, the equivariant differential $\diff_G = \diff \otimes \operatorname{id} - \sum_{i = 1}^m \iota_{X^\#_i} \otimes \sigma^i$ is continuous. Because the equivariant cohomology groups are finite-dimensional from equivariant formality \eqref{eq:kirwan-formality}, the same argument as in \cite[Rmk.\,3.2]{bertelson_remarks_2011} shows the groups $\mathrm{H}_G^\bullet(L)$ are Hausdorff. The proof of \cite[Prop.\,3.1]{bertelson_remarks_2011} applies once again, showing the Künneth formula holds in this case.
\end{proof}

\begin{theorem}
    Let $M$ be a compact smooth manifold endowed with the action of a compact, connected group $G$ and let $(\alpha, \beta - \mu)$ be an equivariant cosymplectic structure in $M$. Assume $M$ has a compact leaf $L$ and identify it with a fibration $\pi \colon M \to \mathbb{S}^1$.

    Then, the equivariant foliated cohomology groups can be computed as
    \begin{equation}
        \mathrm{H}^\bullet_G(\mathcal{F}) \cong \mathrm{H}^\bullet(\mathcal{F}) \otimes \operatorname{Sym}^\bullet(\mathfrak{g}^*)^G.
    \end{equation}
\end{theorem}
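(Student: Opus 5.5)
We follow the same Mayer--Vietoris strategy used for \Cref{thm:foliated-wang} and \Cref{thm:mv-equiv-deRham}, now in the equivariant foliated setting. Choose a cover $U, V \subset \mathbb{S}^1$ by contractible trivializing arcs as in \Cref{lem:trivialization-action}. In these arcs the trivializations come from the Reeb flow and conjugate the $G$-action to a product action $\gamma_g(t,q) = (t, \gamma'_g(q))$. Moreover, the gluing maps are $\operatorname{id}_{U\cap V}^U \times \operatorname{id}_L$ and $\operatorname{id}_{U \cap V}^V \times \varphi$, with $\varphi = \Phi_R^c$ commuting with $G$ because $R$ is $G$-invariant. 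Since $\pi \rho_g = \pi$, the sets $\pi^{-1}(U)$ and $\pi^{-1}(V)$ are $G$-invariant. The functions $\pi^*\rho_U, \pi^*\rho_V$ of a partition of unity pulled back from $\mathbb{S}^1$ are then $G$-invariant and $\diff_{\mathcal{F}}$-closed. They are also annihilated by every $\iota_{X^\#}$, as functions. Hence multiplication by them commutes with $\diff_{\mathcal{F},G}$ and with the $G$-action. The classical splitting of the Mayer--Vietoris sequence therefore descends to the Cartan complexes $\Omega_G^\bullet(\mathcal{F}_{\pi^{-1}(\cdot)})$. This gives a short exact sequence of equivariant foliated complexes whose splitting intertwines the differentials, exactly as in the proof of \Cref{thm:foliated-wang}.

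Next we identify the local terms. By \Cref{prop:foliated-equiv-kunneth}, whose hypothesis holds because the action on $L$ is Hamiltonian (cf.~\cite[Thm.\,3.2]{miranda_hamiltonian_2026}), we have $\mathrm{H}_G^\bullet(\mathcal{F}_{\pi^{-1}U}) \cong \mathcal{C}^\infty(U) \otimes \mathrm{H}^\bullet(L) \otimes \operatorname{Sym}^\bullet(\mathfrak{g}^*)^G$, and likewise for $V$ and $U \cap V$. Here the formality isomorphism of \Cref{thm:kirwan-formality} is used to write $\mathrm{H}^\bullet_G(L) \cong \mathrm{H}^\bullet(L)\otimes \operatorname{Sym}^\bullet(\mathfrak{g}^*)^G$. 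Since the splitting is a chain map, the connecting morphism vanishes. Hence
\[
\mathrm{H}^\bullet_G(\mathcal{F}) \cong \ker\big( ((i^U_{U\cap V})^* \otimes \operatorname{id}_{\mathrm{H}^\bullet(L)} - (i^V_{U\cap V})^* \otimes \varphi^\bullet_G) \big),
\]
where $\varphi^\bullet_G$ denotes the map induced by $\varphi$ on $\mathrm{H}_G^\bullet(L)$.

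The key step, and the main obstacle, is to check that $\varphi^\bullet_G$ corresponds to $\varphi^\bullet \otimes \operatorname{id}$ under the Kirwan isomorphism. This requires naturality of the formality isomorphism with respect to $G$-equivariant symplectomorphisms. Kirwan's isomorphism is not canonical: it depends on a splitting of the restriction $\mathrm{H}_G^\bullet(L) \to \mathrm{H}^\bullet(L)$. I would first try to avoid the issue by working with the filtration by polynomial degree. The map $\varphi^\bullet_G$ is a $\operatorname{Sym}^\bullet(\mathfrak{g}^*)^G$-module map preserving this filtration, and on the associated graded, which is the collapsed $E_2$-page, it acts as $\varphi^\bullet\otimes\operatorname{id}$. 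Taking kernels of the operator above is compatible with the filtration degreewise, and every piece is finite-dimensional in each degree. The kernel therefore has an associated graded isomorphic to $\ker(\ldots\otimes\varphi^\bullet)\otimes\operatorname{Sym}^\bullet(\mathfrak{g}^*)^G$. Here one needs exactness of the passage to the associated graded, which holds because over each point of $\mathbb{S}^1$ we are dealing with finite-dimensional filtered vector spaces with a unipotent-type correction. Alternatively, one can choose the splitting of the restriction map $\varphi$-equivariantly by averaging over the finite-dimensional representation generated by $\varphi^\bullet$; this works when the action is semisimple, and otherwise the filtration argument is used.

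Finally, since $\operatorname{Sym}^\bullet(\mathfrak{g}^*)^G$ is a fixed factor on which every map acts as the identity, the kernel factors degreewise as in the proof of \Cref{thm:dR-equiv-formality}, by expanding in a basis of $\operatorname{Sym}^k(\mathfrak{g}^*)^G$. This gives $\ker(\cdots)\otimes\operatorname{Sym}^\bullet(\mathfrak{g}^*)^G$. By \Cref{thm:foliated-wang}, the first factor is $\mathrm{H}^\bullet(\mathcal{F})$, which yields the claimed isomorphism $\mathrm{H}^\bullet_G(\mathcal{F}) \cong \mathrm{H}^\bullet(\mathcal{F}) \otimes \operatorname{Sym}^\bullet(\mathfrak{g}^*)^G$.
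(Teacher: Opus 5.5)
\textbf{Verdict.} Your argument is correct and shares the paper's Mayer--Vietoris skeleton. The shared steps are: the $G$-invariant, $\diff_{\mathcal{F}}$-closed partition of unity pulled back from $\mathbb{S}^1$ makes the splitting a chain map of Cartan complexes, so $\delta = 0$ (you check explicitly that multiplication by $\pi^*\rho_U$ commutes with $\diff_{\mathcal{F},G}$, which the paper leaves implicit); the local terms come from \Cref{prop:foliated-equiv-kunneth}; and the global group is the kernel of the gluing operator, i.e.\ sections of a flat bundle with fibre $\mathrm{H}^\bullet_G(L)$.

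\textbf{Where you differ.} The real difference is the last step. The paper asserts that the monodromy on $\mathcal{H}^\bullet_G(L)$ is $\varphi^\bullet\otimes\operatorname{id}$ and that $\mathcal{H}^\bullet_G(L)\cong\mathcal{H}^\bullet(L)\otimes\operatorname{Sym}^\bullet(\mathfrak{g}^*)^G$ ``as a direct consequence of equivariant formality''. This tacitly assumes that Kirwan's non-canonical isomorphism can be chosen to intertwine $\varphi^\bullet_G$ with $\varphi^\bullet\otimes\operatorname{id}$. You avoid that assumption. You use only two facts: $\varphi^\bullet_G$ is an $\operatorname{Sym}^\bullet(\mathfrak{g}^*)^G$-linear map preserving the polynomial-degree filtration, and it induces $\varphi^\bullet\otimes\operatorname{id}$ on the collapsed page. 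You then pass to the associated graded. This costs a little more, but it proves the statement with no naturality hypothesis. It also makes clear what the isomorphism is: one of $\mathcal{C}^\infty(\mathbb{S}^1)$-modules, depending on a splitting of the filtration, and not a priori an isomorphism of flat bundles.

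\textbf{Tightening the exactness step.} Your justification via ``finite-dimensional filtered vector spaces with a unipotent-type correction'' at each point of $\mathbb{S}^1$ is too vague. Replace it with the actual reason:
\begin{itemize}
\item The subspaces $F^p\mathrm{H}^k_G(L)$ are $\varphi^\bullet_G$-invariant, so they define flat subbundles $F^p\mathcal{H}^k_G(L)$ with $F^p/F^{p+1}\cong\mathcal{H}^{k-2p}(L)\otimes\operatorname{Sym}^p(\mathfrak{g}^*)^G$.
\item Short exact sequences of smooth vector bundles split, so $\Gamma$ is exact. This gives $\operatorname{gr}\ker=\ker\operatorname{gr}$ for the gluing operator, because sections of the quotient bundles lift.
\item The filtration is finite in each degree, so it also gives $\Gamma(\mathcal{H}^k_G(L))\cong\bigoplus_p\Gamma(\mathcal{H}^{k-2p}(L))\otimes\operatorname{Sym}^p(\mathfrak{g}^*)^G$.
\end{itemize}

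\textbf{The averaging alternative.} It is unnecessary. As stated it is also not quite right: it needs semisimplicity of $\varphi^\bullet_G$ on $\mathrm{H}^\bullet_G(L)$, not just of $\varphi^\bullet$ on $\mathrm{H}^\bullet(L)$.
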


\begin{proof}
    This proof is similar to those of \Cref{thm:foliated-wang,thm:mv-equiv-deRham} and \Cref{thm:dR-equiv-formality}, and, as such, we only outline the main steps.
    
    Let us consider the open cover $\pi^{-1}(U), \pi^{-1}(V)$ of $M$ as in the proof of \Cref{thm:mv-equiv-deRham}. From the equivariant Mayer--Vietoris sequence we obtain a long exact sequence in cohomology which, together with  \Cref{prop:foliated-equiv-kunneth}, can be put in a similar form to that in the proof of \Cref{thm:foliated-wang}. The same argument implies now the relation
    \begin{equation*}
        \mathrm{H}^\bullet_G(\mathcal{F}) = \ker \big( (i_{U \cap V}^U)^* \otimes \operatorname{id}_{\mathrm{H}^\bullet(L)} \otimes \operatorname{id}_{\operatorname{Sym}^\bullet(g^*)} - (i_{U \cap V}^V)^* \otimes \varphi^\bullet \otimes \operatorname{id}_{\operatorname{Sym}^\bullet(g^*)} \big).
    \end{equation*}
    However, we can still interpret these elements as sections of a certain bundle $\mathcal{H}_G^\bullet(L) \to \mathbb{S}^1$ following \Cref{thm:foliated-cohomology-sections-bundle}, constructed with fibre $\mathrm{H}^\bullet_G(L) \cong \mathrm{H}^\bullet(L) \otimes \operatorname{Sym}^\bullet(\mathfrak{g}^*)^G$ and monodromy $(\operatorname{id}_{\mathrm{H}^\bullet(L)} - \varphi^\bullet) \otimes \operatorname{id}_{\operatorname{Sym}^\bullet(\mathfrak{g}^*)}$. As a direct consequence of equivariant formality we have $\mathcal{H}_G^\bullet(L) \cong \mathcal{H}^\bullet(L) \otimes \operatorname{Sym}^\bullet(\mathfrak{g}^*)^G$ and, together with equation \eqref{eq:foliated-cohomology-sections-bundle}, we consequently have
    \begin{equation*}
        \Gamma(\mathcal{H}_G^\bullet(L)) \cong \Gamma(\mathcal{H}^\bullet(L) \otimes \operatorname{Sym}^\bullet(\mathfrak{g}^*)^G) \cong \Gamma(\mathcal{H}^\bullet(L)) \otimes \operatorname{Sym}^\bullet(\mathfrak{g}^*)^G \cong \mathrm{H}^\bullet(\mathcal{F}) \otimes \operatorname{Sym}^\bullet(\mathfrak{g}^*)^G. \qedhere
    \end{equation*}
\end{proof}

\subsection{Equivariant Poisson cohomology}

We conclude this section by explicitly describing the equivariant Poisson cohomology groups of a cosymplectic manifold. The result is a verbatim analogue of the characterization \Cref{thm:poisson-cohomology-cosymplectic} in terms of the foliated cohomology groups. 

Before the proof, let us observe that the splitting defined in equation \eqref{eq:splitting-reeb-sections} is $G$-equivariant given the $G$-action descends to the foliation and the Reeb field $R$ is $G$-invariant. Similarly to the symplectic case, the anchor map $\omega^\flat \otimes \operatorname{id}_{\operatorname{Sym}^\bullet(\mathfrak{g}^*)}$ induces an isomorphism of differential complexes $\mathfrak{X}_{G}^\bullet(\mathcal{F}) \cong \Omega^\bullet_G(\mathcal{F})$ which induces an isomorphism in cohomology $\mathrm{H}_{\Pi, G}^\bullet(\mathcal{F}) \cong \mathrm{H}^\bullet_G(\mathcal{F})$.

\begin{theorem} \label{thm:equiv-poiss-splitting}
    Let $M$ be a smooth manifold together with an action of a compact, connected Lie group $G$ and let $(\alpha, \beta - \mu)$ be an equivariant cosymplectic structure. Assume $\Pi \in \mathfrak{X}^2(M)$ and $\Pi_{\mathcal{F}} \in \mathfrak{X}^2(\mathcal{F})$ are the Poisson structures defined in \Cref{ssec:poisson-geometry}.

    Then, the splitting $\kappa$ in \eqref{eq:splitting-reeb-sections} is $G$-equivariant and hence induces an isomorphism (up to a sign) of differential complexes
    \begin{equation} \label{eq:equiv-Poisson-splitting}
        \kappa \otimes \operatorname{id}_{\operatorname{Sym}^\bullet(\mathfrak{g}^*)} \colon \mathfrak{X}_G^\bullet(\mathcal{F}) \otimes \mathfrak{X}_G^{\bullet - 1}(\mathcal{F}) \stackrel{\sim}{\longrightarrow} \mathfrak{X}^\bullet_G(M).
    \end{equation}
    As a consequence, we obtain an isomorphism of the corresponding cohomology groups
    \begin{equation}
        \mathrm{H}_{\Pi, G}^\bullet(M) \cong \mathrm{H}^\bullet_G(\mathcal{F}) \oplus \mathrm{H}_G^{\bullet - 1}(\mathcal{F}).
    \end{equation}
\end{theorem}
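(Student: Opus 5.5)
The plan is to lift the non-equivariant proof of \Cref{thm:poisson-cohomology-cosymplectic} to the Cartan complex. Three things are needed:
\begin{enumerate}
    \item $\kappa$ is $G$-equivariant, so it restricts to invariants after tensoring with $\operatorname{Sym}^\bullet(\mathfrak{g}^*)$;
    \item $\kappa \otimes \operatorname{id}$ intertwines the equivariant Poisson differential $\diff_{\Pi,G}$ with the (sign-twisted) equivariant foliated Poisson differentials on each summand;
    \item the leafwise symplectic form identifies equivariant foliated Poisson cohomology with $\mathrm{H}_G^\bullet(\mathcal{F})$.
\end{enumerate}
(The domain in \eqref{eq:equiv-Poisson-splitting} should be read as a direct sum $\oplus$, as in \eqref{eq:splitting-reeb-sections}.)

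\textbf{Equivariance.} Since $G$ preserves $\mathcal{F}$, the inclusion $i \colon \mathfrak{X}^\bullet(\mathcal{F}) \to \mathfrak{X}^\bullet(M)$ commutes with pushforward by each $\rho_g$. The Reeb field is $G$-invariant, because $\alpha,\beta$ are invariant and $R$ is unique. Hence $\rho_{g*}(i(X) + R\wedge i(Y)) = i(\rho_{g*}X) + R \wedge i(\rho_{g*}Y)$. The map $\kappa$ is therefore a $G$-equivariant linear isomorphism. The map $\kappa \otimes \operatorname{id}$ is $G$-equivariant for the diagonal action and, having an equivariant inverse, restricts to an isomorphism of $G$-invariants $\mathfrak{X}_G^\bullet(\mathcal{F}) \oplus \mathfrak{X}_G^{\bullet-1}(\mathcal{F}) \to \mathfrak{X}_G^\bullet(M)$.

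\textbf{Compatibility with differentials.} The $\diff_\Pi \otimes \operatorname{id}$ part of $\diff_{\Pi,G}$ is handled by the computation in the proof of \Cref{thm:poisson-cohomology-cosymplectic}: it uses $\mathcal{L}_R\Pi = 0$, and the resulting map is $(X,Y) \mapsto (\diff_{\Pi_{\mathcal{F}}}X, -\diff_{\Pi_{\mathcal{F}}}Y)$. For the contraction part I use the cotangent lift $s\lambda$ from \eqref{eq:cot-lift}, with $\lambda(X_i) = -\beta^\flat \tau(X_i) \in \Omega^1(\mathcal{F})$. The key observations are $\iota_{s\lambda(X_i)} R = \lambda(X_i)(\,j\text{-part}\,)$ evaluated via $s$, which gives $s\lambda(X_i)(R) = 0$ because $s\eta = \eta - \alpha\,\iota_R\eta$ kills $R$. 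Also $\iota_{s\lambda(X_i)} \circ i = i \circ \iota_{\lambda(X_i)}$. Since contraction with a $1$-form is a graded derivation of degree $-1$, we get
\[
    \iota_{s\lambda(X_i)}\big(i(X) + R\wedge i(Y)\big) = i(\iota_{\lambda(X_i)}X) - R \wedge i(\iota_{\lambda(X_i)} Y).
\]
Thus $\diff_{\Pi,G} \circ (\kappa\otimes\operatorname{id}) = (\kappa\otimes\operatorname{id}) \circ (\diff_{\Pi_{\mathcal{F}},G} \oplus -\diff_{\Pi_{\mathcal{F}},G})$. Here $\diff_{\Pi_{\mathcal{F}},G}$ is the Cartan differential of the $G$-differential complex $(\mathfrak{X}^\bullet(\mathcal{F}),\diff_{\Pi_{\mathcal{F}}})$ with contractions $\iota_{\lambda(X)}$. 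The sign on the second factor does not change cohomology. I expect the main obstacle to be this bookkeeping: getting the signs of $R\wedge$ consistently, and checking that $\iota_{\lambda(X)}$, $\mathcal{L}_{\tau(X)}$ give $\mathfrak{X}^\bullet(\mathcal{F})$ a genuine $G$-differential structure. For the latter I would apply \Cref{prop:equiv-poisson-dgga} leafwise to $\Pi_{\mathcal{F}}$, using \eqref{eq:cot-lift-fol}.

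\textbf{Conclusion.} Finally, $\beta^\flat \otimes \operatorname{id}$ (extended to multivectors) is a $G$-equivariant isomorphism $\mathfrak{X}^\bullet(\mathcal{F}) \to \Omega^\bullet(\mathcal{F})$. Non-equivariantly it intertwines $\diff_{\Pi_{\mathcal{F}}}$ with $\diff_{\mathcal{F}}$ up to sign, as in the symplectic case. It also sends $\iota_{\lambda(X)}$ to $\pm\iota_{\tau(X)}$, because $\lambda = -\beta^\flat\tau$. Hence it is an isomorphism of Cartan complexes, giving $\mathrm{H}^\bullet_{\Pi_{\mathcal{F}},G}(\mathcal{F}) \cong \mathrm{H}^\bullet_G(\mathcal{F})$, as remarked before the theorem. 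Combining the steps yields $\mathrm{H}_{\Pi,G}^\bullet(M) \cong \mathrm{H}_G^\bullet(\mathcal{F}) \oplus \mathrm{H}_G^{\bullet-1}(\mathcal{F})$.
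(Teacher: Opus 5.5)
Your proposal is correct and follows the paper's proof essentially step by step. The steps match: equivariance of $\kappa$ from the $G$-invariance of the Reeb field, restriction to invariants, intertwining of $\diff_{\Pi,G}$ via $\mathcal{L}_R\Pi=0$ and $\iota_{s\lambda(X_i)}\circ i = i\circ\iota_{\lambda(X_i)}$, and the identification $\mathrm{H}^\bullet_{\Pi_{\mathcal{F}},G}(\mathcal{F})\cong\mathrm{H}^\bullet_G(\mathcal{F})$ through $\beta^\flat$. Your explicit treatment of the $R\wedge i(Y)$ summand using $s\lambda(X_i)(R)=0$ is what the paper compresses into the remark $\diff_{\Pi,G}R=0$, and you are right that the domain of $\kappa\otimes\operatorname{id}_{\operatorname{Sym}^\bullet(\mathfrak{g}^*)}$ should be a direct sum.
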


\begin{proof}
    Let us check that the map \eqref{eq:equiv-Poisson-splitting} induces an isomorphism between the corresponding complexes. As the map $\kappa$ is an isomorphism (cf. the proof of \Cref{thm:poisson-cohomology-cosymplectic}), after taking tensor products we obtain an isomorphism
    \begin{equation*}
        \kappa \otimes \operatorname{id}_{\operatorname{Sym}^\bullet(\mathfrak{g}^*)} \colon \big( \mathfrak{X}^\bullet(\mathcal{F}) \oplus \mathfrak{X}^{\bullet - 1}(\mathcal{F}) \big) \otimes \operatorname{Sym}^\bullet(\mathfrak{g}^*) \to \mathfrak{X}^\bullet(M) \otimes \operatorname{Sym}^\bullet(\mathfrak{g}^*).
    \end{equation*}
    Since the map $\kappa$ is equivariant because the Reeb field $R \in \mathfrak{X}(M)$ is invariant, so is the map $\kappa \otimes \operatorname{id}_{\operatorname{Sym}^\bullet(\mathfrak{g}^*)}$. Observe that we have the natural isomorphism
    \begin{equation*}
        (\mathfrak{X}^\bullet(\mathcal{F}) \oplus \mathfrak{X}^{\bullet - 1}(\mathcal{F})) \otimes \operatorname{Sym}^\bullet(\mathfrak{g}^*) \cong \mathfrak{X}^\bullet(\mathcal{F}) \otimes \operatorname{Sym}^\bullet(\mathfrak{g}^*) \oplus \mathfrak{X}^{\bullet - 1}(\mathcal{F}) \otimes \operatorname{Sym}^\bullet(\mathfrak{g}^*),
    \end{equation*}
    which is clearly $G$-equivariant with respect to the diagonal action in the right hand side. As all the previous isomorphisms are equivariant, they descend to an isomorphism of invariant sets and hence to the isomorphism
    \begin{equation*}
        \kappa \otimes \operatorname{id}_{\operatorname{Sym}^\bullet(\mathfrak{g}^*)} \colon \mathfrak{X}^\bullet_G(\mathcal{F}) \otimes \mathfrak{X}_G^{\bullet - 1}(\mathcal{F}) \longrightarrow \mathfrak{X}^\bullet_G(M).
    \end{equation*}

    To see that this map intertwines the equivariant differentials let us take an arbitrary element $X \otimes p \in \mathfrak{X}_G^\bullet(\mathcal{F})$. Computation of the differential yields
    \begin{align*}
        \kappa \otimes \operatorname{id}_{\operatorname{Sym}^\bullet(\mathfrak{g}^*)} \big( \diff_{\Pi_{\mathcal{F}}, G} (X \otimes p) \big) &= \kappa \otimes \operatorname{id}_{\operatorname{Sym}^\bullet(\mathfrak{g}^*)} \bigg( \diff_{\Pi_{\mathcal{F}}} X \otimes p - \sum_{i = 1}^m \iota_{\lambda(X_i)} X \otimes \sigma^i p \bigg) \\
        &= i(\diff_{\Pi_{\mathcal{F}}} X) \otimes p - \sum_{i = 1}^m i(\iota_{\lambda(X_i)} X) \otimes \sigma^i p \\
        &= \diff_{\Pi} i(X) \otimes p - \sum_{i = 1}^m \iota_{s \lambda(X_i)} i(X) \otimes \sigma^i p \\
        &= \diff_{\Pi, G} (i(X) \otimes p) \\
        &= \diff_\Pi (\kappa \otimes \operatorname{id}_{\operatorname{Sym}^\bullet(\mathfrak{g}^*)}) (X \otimes p).
    \end{align*}
    In the third equality, the fact that $i(\iota_{\lambda} X) = \iota_{s \lambda} i(X)$ follows from the relation $\langle \lambda, X \rangle = \langle s \lambda, iX \rangle$, which can be proved from $\langle s \lambda, i X \rangle = \langle i^* s \lambda, X \rangle = \langle \lambda, X \rangle$. For the terms in the second summand, the computation is essentially the same together with the fact that $\diff_{\Pi, G} R = 0$. This proves the claim.
\end{proof}

\begin{corollary} \label{cor:equiv-formality-poisson}
    In the assumptions of \Cref{thm:equiv-poiss-splitting}, the equivariant Poisson cohomology groups are given by
    \begin{equation}
        \mathrm{H}_{\Pi, G}^\bullet(M) \cong \mathrm{H}_\Pi^\bullet(M) \otimes \operatorname{Sym}(\mathfrak{g}^*)^G.
    \end{equation}
    In other words, the complex $(\mathfrak{X}_G^\bullet(M), \diff_{\Pi, G})$ is equivariantly formal.
\end{corollary}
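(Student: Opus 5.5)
The plan is to combine the splitting of \Cref{thm:equiv-poiss-splitting} with the formality of equivariant foliated cohomology proved in the previous subsection. Then I will reassemble the result using Osorno's non-equivariant splitting, \Cref{thm:poisson-cohomology-cosymplectic}. All three ingredients are already available, so the argument is essentially bookkeeping of isomorphisms.

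\textbf{Step 1.} Start from \Cref{thm:equiv-poiss-splitting}, which gives
\[
    \mathrm{H}_{\Pi,G}^\bullet(M) \cong \mathrm{H}_G^\bullet(\mathcal{F}) \oplus \mathrm{H}_G^{\bullet-1}(\mathcal{F}).
\]

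\textbf{Step 2.} Substitute the formality result for the foliated theory, $\mathrm{H}^\bullet_G(\mathcal{F}) \cong \mathrm{H}^\bullet(\mathcal{F}) \otimes \operatorname{Sym}^\bullet(\mathfrak{g}^*)^G$. Its hypotheses are satisfied here: $M$ is compact with a compact leaf, and the action on $L$ is Hamiltonian, so that Kirwan's \Cref{thm:kirwan-formality} applies fibrewise. This yields
\[
    \mathrm{H}_{\Pi,G}^\bullet(M) \cong \big(\mathrm{H}^\bullet(\mathcal{F}) \oplus \mathrm{H}^{\bullet-1}(\mathcal{F})\big) \otimes \operatorname{Sym}^\bullet(\mathfrak{g}^*)^G,
\]
where distributing the tensor product over the direct sum is the usual isomorphism.

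\textbf{Step 3.} Identify the bracket with $\mathrm{H}^\bullet_\Pi(M)$ via \Cref{thm:poisson-cohomology-cosymplectic}.

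\textbf{Step 4.} Keep the gradings consistent throughout. The total degree counts $\operatorname{Sym}^k$ with weight $2k$, so in each total degree the isomorphism reads
\[
    \bigoplus_{i+2k=\bullet} \mathrm{H}^i_\Pi(M)\otimes \operatorname{Sym}^k(\mathfrak{g}^*)^G .
\]
The shift by one in the second summand commutes with tensoring by $\operatorname{Sym}$.

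The main point requiring care is compatibility. The isomorphism of \Cref{thm:equiv-poiss-splitting} is induced by $\kappa\otimes\operatorname{id}$, and the non-equivariant one is induced by $\kappa$. To obtain a natural isomorphism rather than merely an abstract one, I would check the following: under the formality identification, the map $\mathrm{H}^\bullet_\Pi(M)\otimes\operatorname{Sym}(\mathfrak{g}^*)^G \to \mathrm{H}^\bullet_{\Pi,G}(M)$ is the one obtained by restricting $\kappa\otimes\operatorname{id}$ to cohomology. This works because formality of the foliated theory was itself obtained from the Mayer--Vietoris/Wang description with $\operatorname{id}_{\operatorname{Sym}}$ tensored on, and $\kappa$ commutes with these restrictions since $R$ is $G$-invariant.

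The interpretation as equivariant formality of the complex $(\mathfrak{X}_G^\bullet(M),\diff_{\Pi,G})$ then follows. The spectral sequence obtained by filtering by polynomial degree has $E_2$-page $\mathrm{H}^\bullet_\Pi(M)\otimes\operatorname{Sym}(\mathfrak{g}^*)^G$. Its abutment has the same dimension in each degree, and these dimensions are finite because $L$ is compact, so the spectral sequence must degenerate.
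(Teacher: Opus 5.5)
Your Steps 1--3 are exactly the paper's proof. The paper chains \Cref{thm:equiv-poiss-splitting}, the foliated formality $\mathrm{H}^\bullet_G(\mathcal{F})\cong\mathrm{H}^\bullet(\mathcal{F})\otimes\operatorname{Sym}^\bullet(\mathfrak{g}^*)^G$, distributivity of $\otimes$ over $\oplus$, and \Cref{thm:poisson-cohomology-cosymplectic}, and it takes ``equivariantly formal'' to mean nothing more than the resulting isomorphism. Like the paper, you tacitly add compactness and a compact leaf. These are not hypotheses of \Cref{thm:equiv-poiss-splitting}, but they are needed to invoke the foliated formality theorem.

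\textbf{Your final paragraph fails.} Since the statement only needs the isomorphism, you should drop it or replace it.
\begin{itemize}
\item The groups $\mathrm{H}^i_\Pi(M)$ are not finite-dimensional. By \Cref{thm:poisson-cohomology-cosymplectic} and \Cref{thm:foliated-cohomology-sections-bundle} they contain spaces of smooth sections $\Gamma\mathcal{H}^\bullet(L)$ over $\mathbb{S}^1$. Already $\mathrm{H}^0_\Pi(M)$ contains every Casimir $\pi^*f$ with $f\in\mathcal{C}^\infty(\mathbb{S}^1)$.
\item Compactness of $L$ only makes the fibre $\mathrm{H}^\bullet(L)$ finite-dimensional, not the space of sections.
\item With infinite-dimensional terms, an abstract isomorphism between the abutment and the page $\mathrm{H}^\bullet_\Pi(M)\otimes\operatorname{Sym}(\mathfrak{g}^*)^G$ says nothing about the differentials. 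So no dimension count can force collapse.
\end{itemize}

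\textbf{A correct route to collapse.} $\kappa\otimes\operatorname{id}$ is the identity on the polynomial factor, so it is an isomorphism of complexes filtered by polynomial degree. Hence collapse for $(\mathfrak{X}^\bullet_G(M),\diff_{\Pi,G})$ reduces to collapse for the equivariant foliated complex. All differentials are $\operatorname{Sym}(\mathfrak{g}^*)^G$-linear, so collapse is equivalent to surjectivity of the canonical map $\mathrm{H}^\bullet_G(\mathcal{F})\to\mathrm{H}^\bullet(\mathcal{F})$ that sets the polynomial variables to zero. That surjectivity is where Kirwan's theorem actually enters. You lift fibrewise, with $\mathcal{C}^\infty$ dependence on the transverse parameter, and glue using the leafwise-constant partition of unity $\pi^*\rho_U,\pi^*\rho_V$.

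The same point shows your ``compatibility'' paragraph is ill-posed. The canonical map runs from $\mathrm{H}^\bullet_{\Pi,G}(M)$ to $\mathrm{H}^\bullet_\Pi(M)$. There is no canonical map $\mathrm{H}^\bullet_\Pi(M)\otimes\operatorname{Sym}(\mathfrak{g}^*)^G\to\mathrm{H}^\bullet_{\Pi,G}(M)$ until such lifts have been chosen.
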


\begin{proof}
    The result is a direct combination of \Cref{thm:equiv-poiss-splitting}, \Cref{cor:equiv-formality-poisson}, and \Cref{thm:poisson-cohomology-cosymplectic}:
    \begin{align*}
        \mathrm{H}_{\Pi, G}^\bullet(M) &\cong \mathrm{H}_G^\bullet(\mathcal{F}) \oplus \mathrm{H}_G^{\bullet - 1}(\mathcal{F}) \\
        &\cong \mathrm{H}^\bullet(\mathcal{F}) \otimes \operatorname{Sym}^\bullet(\mathfrak{g}^*)^G \oplus \mathrm{H}^{\bullet - 1}(\mathcal{F}) \otimes \operatorname{Sym}^\bullet(\mathfrak{g}^*)^G \\
        &\cong \big( \mathrm{H}^\bullet(\mathcal{F}) \oplus \mathrm{H}^{\bullet - 1}(\mathcal{F}) \big) \otimes \operatorname{Sym}^\bullet(\mathfrak{g}^*)^G \\
        &\cong \mathrm{H}^\bullet_\Pi(M) \otimes \operatorname{Sym}^\bullet(\mathfrak{g}^*)^G. \qedhere 
    \end{align*}
\end{proof}

\bibliographystyle{alpha}
\bibliography{equivariant-cosymplectic-manifolds}

\end{document}